\documentclass[leqno,onefignum]{siamltex1213}

%

\usepackage{amssymb,amsmath} 

\newcounter{tcnt}[section]

\newtheorem{assumption}[tcnt]{Assumption}
\newtheorem{remark}[tcnt]{Remark}

\title{Optimal Control of a Semidiscrete Cahn-Hilliard-Navier-Stokes System with Non-Matched Fluid Densities
\thanks{This research was supported by the German Research Foundation DFG 
through the SPP 1506 and the
SFB-TRR 154 and by the Research Center {\sc MATHEON} through project 
C-SE5 and D-OT1 funded by the Einstein Center for Mathematics Berlin.}
}

\author{Michael Hinterm\"uller, Tobias Keil, Donat Wegner
\thanks{Institute for Mathematics, Humboldt-Universit\"at zu Berlin, Unter den Linden 6, 10099 Berlin, Germany.}
}

\begin{document}
\maketitle
\slugger{mms}{xxxx}{xx}{x}{x--x}

\begin{abstract}
This paper is concerned with the distributed optimal control of a
time-discrete Cahn--Hilliard/Navier--Stokes system with variable
densities.
It focuses on the double-obstacle potential which yields an optimal
control problem for a family of coupled systems in each time instance of a
variational inequality of fourth order and the Navier--Stokes equation.
By proposing a suitable time-discretization, energy estimates are proved
and the existence of solutions to the primal system and of optimal
controls is established for the original problem as well as for a family
of regularized problems. The latter correspond to Moreau--Yosida type
approximations of the double-obstacle potential. The consistency of these
approximations is shown and first order optimality conditions for the
regularized problems are derived. Through a limit process, a stationarity
system for the original problem is established which is related to a
function space version of C-stationarity.
\end{abstract}

\begin{keywords}Cahn-Hilliard, limiting C-stationarity, mathematical programming with equilibrium constraints, Navier-Stokes, non-matched densities, non-smooth potentials, optimal control, semidiscretization in time, Yosida regularization. \end{keywords}

\begin{AMS}49K20, 35J87, 90C46, 76T10\end{AMS}

\pagestyle{myheadings}
\thispagestyle{plain}
\markboth{Optimal control of a semidiscrete Cahn-Hilliard-Navier-Stokes system}{M. Hinterm\"uller, T. Keil, D. Wegner}
\section{Introduction}
In this paper we are concerned with the optimal control of two (or more) 
immiscible fluids
with non-matched densities. For the mathematical formulation of the 
fluid phases, we use
phase field models which have recently been used successfully in 
applications involving, e.g.,
phase separation phenomena (see, e.g., \cite{Aland2013, Eckert2013, Kim2004a}).
Some of the strengths of phase field 
approaches are due to their
ability to overcome both, analytical difficulties of topological 
changes, such as, e.g., droplet
break-ups or the coalescence of interfaces, and numerical challenges in 
capturing the interface
dynamics.
In this context, a so-called order parameter depicts the concentration 
of the
fluids, attaining extreme values at the pure phases and intermediate 
values within a thin
(diffuse) interface layer, and it is associated with 
decreasing/minimizing a suitably
chosen energy.

A renowned diffuse interface model is the Cahn-Hilliard system which was first introduced by Cahn and Hilliard in \cite{Cahn1958}.
In the presence of hydrodynamic effects, the system has to be enhanced by an equation which captures the behavior of the fluid.
In \cite{Hohenberg1977}, Hohenberg and Halperin published a first basic model for immiscible, viscous two-phase flows.
Their so-called 'model H' combines the Cahn-Hilliard system with the Navier-Stokes equation.
It is however restricted to the case where the two fluids possess nearly identical densities, i.e., matched densities.
Recently, Abels, Garcke and Gr\"un \cite{Abels2012} obtained the following diffuse interface model for two-phase flows with non-matched densities:
\begin{subequations}\label{CHNS}
\begin{align}
\partial_t\varphi
+v\nabla\varphi
-\textnormal{div}(m(\varphi)\nabla\mu)
&=0,\label{CHNS3}\\ 
-\Delta\varphi
+\partial\Psi_0(\varphi) 
-\mu
-\kappa\varphi
&=0,\label{CHNS4}\\
\partial_t(\rho(\varphi) v)+\textnormal{div}( v\otimes\rho(\varphi) v)
-\textnormal{div}(2\eta(\varphi)\epsilon(v))
+\nabla p&\nonumber\\
+\textnormal{div}(v\otimes J)
-\mu\nabla\varphi
&=0,\label{CHNS1}\\
\textnormal{div}v
&=0,\label{CHNS2}\\
v_{|\partial\Omega}
&=0,\label{CHNS5}\\
\partial_n\varphi_{|\partial\Omega}
=\partial_n\mu_{|\partial\Omega}
&=0,\label{CHNS6}\\
(v,\varphi)_{|t=0}
&=(v_a,\varphi_a),\label{CHNS7}
\end{align}
\end{subequations}
which is supposed to hold in the space-time cylinder $\Omega\times(0,\infty)$, where $\partial\Omega$ denotes the boundary of $\Omega$.
This system is thermodynamically consistent in the sense that it allows for the derivation of local entropy or free energy inequalities.

In the above model, $v$ represents the velocity of the fluid and $p$ describes the fluid pressure. 
The symmetric gradient of $v$ is defined by $\epsilon(v):=\frac{1}{2}(\nabla v+\nabla v^\top)$.
The density $\rho$ of the mixture of the fluids depends on the order parameter $\varphi$ which reflects the mass concentration of the fluid phases.
More precisely,
\begin{align}
\rho(\varphi)=\frac{\rho_1+\rho_2}{2}+\frac{\rho_2-\rho_1}{2}\varphi, \label{rhophi}
\end{align}
where $\varphi$ ranges in the interval $[-1,1]$,
and $0<\rho_1\leq\rho_2$ are the given densities of the two fluids under consideration.
This is one of the main distinctions compared to the model with matched densities, where $\rho$ is a fixed constant.
The quantity $\mu$ denotes the chemical potential in the Cahn-Hilliard system and helps to split the fourth-order in space differential operator into two second-order operators.
Another important difference between (\ref{CHNS})  and model 'H' is the presence of a relative flux
$J:=-\frac{\rho_2-\rho_1}{2}m(\varphi)\nabla\mu$
which corresponds to the diffusion of the two phases and additionally complicates the analytical situation.
The viscosity and mobility coefficients of the system, $\eta$ and $m$, depend on the actual concentration of the two fluids at each point in time and space.
The initial states are given by $v_a$ and $\varphi_a$, and $\kappa>0$ is a positive constant.
Furthermore, $\Psi_0$ represents the convex part of the homogeneous free energy density contained in the Ginzburg-Landau energy model
which is associated with the Cahn-Hilliard part of (\ref{CHNS}). 
Usually, the homogeneous free energy density serves the purpose of restricting the order parameter $\varphi$ to the physically meaningful range $[-1,1]$ 
and to capture the spinodal decomposition of the phases.
For this reason, it is typically non-convex and maintains two local minima near or at $-1$ and $1$.

Depending on the underlying applications, different choices have been investigated in the literature.
In their original paper \cite{Cahn1958}, Cahn and Hilliard considered the logarithmic form
$\Psi(\varphi)=(1+\varphi)\ln (1+\varphi)+(1-\varphi)\ln (1-\varphi) -\frac{\kappa}{2}\varphi^2 $
which also plays an important role in the Flory-Huggins solution theory of the thermodynamics of polymer solutions.
Another possible choice is the smooth double-well
potential $\Psi(\varphi)=\frac{\kappa}{2}(1-\varphi^2)^2$, see e.g. \cite{Copetti1990,Elliott1986}.
It permits pure phases but fails to restrict the order parameter to $[-1,1]$. Therefore, it is perhaps a less relevant choice in material science.
In \cite{Oono1988}, Oono and Puri found that in the case of deep quenches of,
e.g. binary alloys, the double-obstacle potential,
is better suited than the other free energy models mentioned above.
A similar observation appears to be true in the case of polymeric membrane formation
under rapid wall hardening.
The double-obstacle potential $\Psi(\varphi)=I_{[-1,1]}(\varphi) -\frac{\kappa}{2}\varphi^2 $, with $I_{[-1,1]}$ denoting the indicator function of the interval $[-1,1]$ in $\mathbb{R}$,
combines the advantage of the existence of pure phases and the exclusiveness of the interval $[-1,1]$
at the cost of losing differentiability (when compared, e.g., to the double-well potential).
As a consequence, 
(\ref{CHNS4})
becomes a variational inequality
which complicates the analytical and numerical treatment of the overall model.

In this paper we study the optimal control of a time discrete coupled Cahn-Hilliard-Navier-Stokes (CHNS) system
with the double-obstacle potential.
For this purpose, we introduce a distributed control $u$ which enters the Navier-Stokes equation (\ref{CHNS1}) 
on the right-hand side
and aims to minimize an objective functional $\mathcal{J}$ subject to the control-version CHNS($u$) of the Cahn-Hilliard-Navier-Stokes system:
\begin{align*}
&\text{minimize } \mathcal{J}(\varphi,\mu,v,u) \text{ over } (\varphi,\mu,v,u)\\
&\text{subject to (s.t.) }
u\in U_{ad},\ (\varphi,\mu,v,u)\text{ satisfies CHNS($u$)},
\end{align*}
where $U_{ad}$ is a given set of admissible controls.

Regarding physical applications, we point out that the CHNS system is used to model a variety of situations.
These range from the aforementioned solidification process of liquid
metal alloys, cf. \cite{Eckert2013}, or the simulation of bubble dynamics, as in Taylor flows \cite{Aland2013}, or pinch-offs of liquid-liquid jets \cite{Kim2004},
to the formation of polymeric membranes \cite{Zhou2006} or proteins
crystallization, see e.g. \cite{Kim2004a} and references within.
Furthermore, the model can be easily adapted to include the effects of surfactants such as colloid
particles at fluid-fluid interfaces in gels and emulsions used in food, pharmaceutical, cosmetic, or petroleum industries \cite{Aland2012,Praetorius2013}.
In many of these situations an optimal control context is desirable
where the system is influenced in such a way that a prescribed system behavior needs to be guaranteed.

In the literature, the classical case of two-phase flows of liquids with matched densities is well investigated, see e.g. \cite{Hohenberg1977}.
When it comes to the modeling of fluids with different densities, then the literature presents 
various approaches,
ranging from quasi-incompressible models with non-divergence free velocity fields, see e.g. \cite{Lowengrub1998}, to possibly thermodynamically inconsistent models with solenoidal fluid velocities, cf. \cite{Ding2007}.
We refer to \cite{Boyer2002,Boyer2004,Gal2010} for additional analytical and numerical results for some of these models.
In \cite{Abels2013}, Abels, Depner and Garcke derived an existence result for the given system (\ref{CHNS})  with a logarithmic potential,
and in the recent preprint \cite{Garcke2014} system (\ref{CHNS}) with smooth potentials (thus excluding the double-obstacle homogeneous free energy density) is considered
in a fully discrete and an alternative semi-discrete in time setting including numerical simulations. 

The optimal control problem associated to the Cahn-Hilliard-Navier-Stokes system with matched densities and a non-smooth homogeneous free energy density 
(double-obstacle potential) has been previously studied by the first and 
last author of this work in \cite{Hintermueller2014}. We also mention the recent preprint \cite{Frigeri2014a} which treats the control of a nonlocal Cahn-Hilliard-Navier-Stokes 
system in two dimensions.
Apart from these contributions the literature on the optimal control of the coupled CHNS-system with non-matched densities is - to the best of our knowledge - essentially void.
Nevertheless, we mention that there are numerous publications concerning the optimal control of the phase separation process itself, i.e. the distinct Cahn-Hilliard system,
see e.g. \cite{Colli2014,Colli2014a,Hintermueller2011,Hintermueller2012,Wang2000,Yong1991}.

We point out that the presence of a non-smooth homogeneous free energy 
density associated with the underlying Ginzburg-Landau energy in the 
Cahn-Hilliard system gives rise to an optimal control problem for the 
Navier-Stokes system coupled to the Cahn-Hilliard variational 
inequality. In particular, due to the presence of the variational 
inequality constraint, classical constraint qualifications (see, e.g., 
\cite{Zowe1979}) fail which prevents the application of 
Karush-Kuhn-Tucker (KKT) theory in Banach space for the first-order 
characterization of an optimal solution by (Lagrange) multipliers. In 
fact, it is known \cite{Hintermuller2009,Hintermueller2014} that the resulting problem falls into the realm of 
mathematical programs with equilibrium constraints (MPECs) in function space. 
Even in finite dimensions, this problem class is well-known for its 
constraint degeneracy \cite{Luo1996,Outrata1998}.
As a result, stationarity conditions for this problem class 
are no longer unique (in contrast to KKT conditions); compare 
\cite{Hintermuller2009,Hintermuller2014} in function space and, e.g., \cite{Scheel2000} in 
finite dimensions. They rather depend on the underlying problem 
structure and/or on the chosen analytical approach. In this work, we 
utilize a Yosida regularization technique with a subsequent passage to 
the limit with the Yosida parameter in order to derive conditions of 
C-stationarity type. This technique is reminiscent of the one pioneered 
by Barbu in \cite{Barbu1984}, but for different problem classes.

The remainder of the paper is organized as follows.
In section 2 we introduce the semi-discrete Cahn-Hilliard-Navier-Stokes system and assign it to the corresponding optimal control problem. 
In section 3 we show the existence of feasible points to the original optimal control problem, as well as to regularized problems.
Section 4 is concerned with the existence of globally optimal solutions, and section 5 deals with the consistency of the chosen regularization technique.
In section 6 we derive first-order optimality conditions for the regularized problems using a classical result from non-linear optimization theory.
Then a limiting process leads to a stationarity system for the original problem.
The latter is the content of section 7.

\section{The semi-discrete CHNS-system and the optimal control problem}
As a first step towards the numerical treatment of the underlying Cahn-Hilliard-Navier-Stokes system, we study a semi-discrete (in time) variant.
For our subsequent analysis we start by fixing the associated function spaces and by invoking our working assumptions.

For this purpose, let $\Omega\subset\mathbb{R}^N,N=2,3$, be a bounded domain with smooth boundary $\partial\Omega\in C^2(\Omega)$.
In particular, $\Omega$ satisfies the cone condition, cf. \cite[Chapter IV, 4.3]{Adams2003}.

For $k\in\mathbb{N}$ and $1\leq p\leq \infty$ we introduce the following Sobolev spaces:
\begin{align*}
H^{k}_{0,\sigma}(\Omega;\mathbb{R}^N)&=\left\{f\in H^{k}(\Omega;\mathbb{R}^N)\cap H^{1}_0(\Omega;\mathbb{R}^N):\textnormal{div} f =0,\text{ a.e. on }\Omega\right\},\\ 
\overline{W}^{k,p}(\Omega) &= 
\left\{f\in W^{k,p}(\Omega):\int_\Omega f dx=0\right\},\\ 
\overline{W}^{k,p}_{\partial_n}(\Omega) &= 
\left\{f\in \overline{W}^{k,p}(\Omega):\partial_n f_{|\partial\Omega}=0\text{ on }{\partial\Omega}\right\}, 
\end{align*} 
where 'a.e.' stands for 'almost everywhere'.
Here, ${W}^{k,p}(\Omega)$ and ${W}_0^{k,p}(\Omega)$ denote the usual Sobolev space, see \cite{Adams2003}.
For $p=2$, we also write $H^k(\Omega)$ respectively $H^k_0(\Omega)$ instead.
Unless otherwise noted, $(\cdot,\cdot)$ represents the $L^2$-inner product,
$\left\|\cdot\right\|$ the induced norm,
and $\left\langle\cdot,\cdot\right\rangle:=\left\langle\cdot,\cdot\right\rangle_{\overline{H}^{-1},\overline{H}^1}$ the duality pairing between $\overline{H}^1(\Omega)$ and $\overline{H}^{-1}(\Omega)$. 
For a Banach space $W$, we denote by $W^*$ its topological dual, and
$\mathcal{L}(W,W^*)$ defines the space of all linear and continuous operators from $W$ to $W^*$.
In our notation for norms, we do not distinguish between scalar- or vector-valued functions.
The inner product of vectors is denoted by '$\cdot$', the vector product is represented be '$\otimes$' and the tensor product for matrices is written as ':'.
\begin{remark}\label{shifty}
Before we present the semi-discrete system and assuming integrability in time, from (\ref{CHNS3}) we get
\begin{align*}
\int_\Omega\partial_t\varphi dx
&=-\int_\Omega v\nabla\varphi dx
+\int_\Omega\textnormal{div}(m(\varphi)\nabla\mu) dx
=0,
\end{align*}
Hence utilizing (\ref{CHNS7}) the integral mean of $\varphi$ satisfies
$$\frac{1}{|\Omega|}\int_\Omega\varphi dx\equiv\frac{1}{|\Omega|}\int_\Omega\varphi_a dx=:\overline{\varphi_a},$$ 
i.e., it is constant in time.
By assuming $\overline{\varphi_a}\in(-1,1) $, we exclude the uninteresting case $\left|\overline{\varphi_a}\right|=1$. 
This can be achieved by considering the shifted system (\ref{CHNS}), where $\varphi$ is replaced by its projection onto $\overline{L}^2(\Omega)$.
Consequently, we need to work with shifted variables such as, e.g. $m(y+\overline{\varphi_a})$, which we again denote by $m(y)$ in a slight misuse of notation.
\end{remark}

Motivated by physics, 
we assume throughout that the mobility and viscosity coefficients are strictly positive as specified in Assumption \ref{assum1} below.
Furthermore, we extend the connection (\ref{rhophi}) between $\varphi$ and $\rho$ to all of $\mathbb{R}$,
as our studies include certain double-well type potentials which allow for values of $\varphi$ outside the physically relevant interval $[-1,1]$.
\begin{assumption}\label{assum1}
\begin{enumerate}
\item The coefficient functions $m,\eta\in C^2(\mathbb{R})$ in (\ref{CHNS1}) and (\ref{CHNS3}) as well as their derivatives up to second order are bounded,
i.e. there exist constants $0<b_1\leq b_2$ such that for every $x\in\mathbb{R}$, it holds that $b_1\leq \min\{m(x),\eta(x)\}$ and
\begin{align*}
\max\{m(x),\eta(x),|m'(x)|,|\eta'(x)|,|m''(x)|,|\eta''(x)|\}\leq b_2.
\end{align*}
\item The initial state satisfies $(v_a,\varphi_a)\in H^{2}_{0,\sigma}(\Omega;\mathbb{R}^N)\times \left(\overline{H}^2_{\partial_n}(\Omega)\cap \mathbb{K}\right)$
where
$$\mathbb{K}:=\left\{v\in \overline{H}^1(\Omega):\psi_1\leq v\leq\psi_2\textnormal{ a.e. in }\Omega\right\},$$
with $-1-\overline{\varphi_a}=:\psi_1<0<\psi_2:=1-\overline{\varphi_a}$.
\item The density $\rho$ depends on the order parameter $\varphi$ via
$$\rho(\varphi)=\max\left\{\frac{\rho_1+\rho_2}{2}+\frac{\rho_2-\rho_1}{2}(\varphi+\overline{\varphi_a}),0\right\}\geq 0.$$ 
\end{enumerate}
\end{assumption}
We note that by Remark \ref{shifty} the pure phases are attained at $x$ when $\varphi(x)=\psi_1$ or $\varphi(x)=\psi_2$,
and the $max$-operator in Assumption \ref{assum1}.3 ensures that the density remains always non-negative.
The latter is necessary to derive appropriate energy estimates.

With these assumptions we now state the semi-discrete Cahn-Hilliard-Navier-Stokes system. 
For the sake of generality, we additionally introduce a distributed force on the right-hand side of the Navier-Stokes equation, which will later serve the purpose of a distributed control.
Below and throughout the paper, $\tau>0$ denotes the time step-size and $M\in\mathbb{N}$ the total number of time instances in the semi-discrete setting.
\begin{definition}[Semi-discrete CHNS-system]\label{defsemidis}
Let 
$\Psi_0:\overline{H}^1(\Omega)\rightarrow \mathbb{R}$ be a convex functional with subdifferential $\partial\Psi_0$.
Fixing $(\varphi _{-1}, v _{0})=(\varphi _a , v _a) $ we say that a triple
\begin{align*}
(\varphi,\mu,v)=((\varphi_i)_{i=0}^{M-1},(\mu_i)_{i=0}^{M-1},(v_i)_{i=1}^{M-1})
\end{align*}
in $\overline{H}^2_{\partial_n}(\Omega)^{M} \times \overline{H}^2_{\partial_n}(\Omega)^{M}\times H^{1}_{0,\sigma}(\Omega;\mathbb{R}^N)^{M-1}$
solves the semi-discrete CHNS system
with respect to a given control
$u=(u_i)_{i=1}^{M-1}\in L^2(\Omega;\mathbb{R}^N)^{M-1}$, denoted as $(\varphi,\mu,v)\in S_\Psi(u)$,
if 
it holds for all $\phi\in \overline{H}^1(\Omega)$ and $\psi\in H^{1}_{0,\sigma}(\Omega;\mathbb{R}^N)$ that
\begin{align} %
&\left\langle\frac{\varphi_{i+1} -\varphi_{i} }{\tau},\phi\right\rangle
+\left\langle v_{i+1}\nabla\varphi_{i},\phi\right\rangle
-\left\langle\textnormal{div}(m(\varphi_{i})\nabla\mu_{i+1}),\phi\right\rangle=0,\label{firsttim1}\\ 
&\hspace*{0.6cm} \left\langle -\Delta\varphi_{i+1},\phi\right\rangle
+\left\langle \partial\Psi_0(\varphi_{i+1}),\phi\right\rangle
-\left\langle \mu_{i+1},\phi\right\rangle
-\left\langle \kappa\varphi_{i},\phi\right\rangle=0,\label{firsttim2}\\ 
&\left\langle\frac{\rho(\varphi_{i}) v_{i+1}-\rho(\varphi_{i-1}) v_i}{\tau},\psi\right\rangle_{H^{-1}_{0,\sigma},H^1_{0,\sigma}}
+\left\langle\textnormal{div}(v_{i+1}\otimes \rho(\varphi_{i-1})v_i),\psi\right\rangle_{H^{-1}_{0,\sigma},H^1_{0,\sigma}}\nonumber\\
&-\left\langle\textnormal{div}(v_{i+1}\otimes \frac{\rho_2-\rho_1}{2}m(\varphi_{i-1})\nabla\mu_i),\psi\right\rangle_{H^{-1}_{0,\sigma},H^1_{0,\sigma}}
+(2\eta(\varphi_{i})\epsilon(v_{i+1}),\epsilon (\psi))\nonumber\\ %
&\hspace*{4cm}-\left\langle\mu_{i+1}\nabla\varphi_{i},\psi\right\rangle_{H^{-1}_{0,\sigma},H^1_{0,\sigma}}
=\left\langle u_{i+1},\psi\right\rangle_{H^{-1}_{0,\sigma},H^1_{0,\sigma}}.\label{firsttim3} 
\end{align}
The first two equations are supposed to hold for every $0\leq i+1 \leq M-1$ and 
the last equation holds for every $1\leq i+1 \leq M-1$. %
\end{definition}
\begin{remark}
In general, the subdifferential of a convex function $\Psi_0$ can be a set-valued mapping, see, e.g., \cite{Ekeland1999}.
In this case, by equation (\ref{firsttim2}) there exists $\beta\in\partial\Psi_0(\varphi_{i+1})$ such that
\begin{align*}
\left\langle -\Delta\varphi_{i+1},\phi\right\rangle
+\left\langle \beta,\phi\right\rangle
-\left\langle \mu_{i+1},\phi\right\rangle
-\left\langle \kappa\varphi_{i},\phi\right\rangle=0,\ \forall \phi\in \overline{H}^1(\Omega).
\end{align*}
\end{remark}

We note that in the above system the boundary conditions specified in (\ref{CHNS}) are included in the respective function spaces.

It is interesting to note that our semi-discretization of (\ref{CHNS}) in time involves three time instances $(i-1,i,i+1)$.
Equations (\ref{firsttim1}) and (\ref{firsttim2}), however, do not involve the velocity at the ''old'' time instance $i-1$.
As a consequence, $(\varphi_0,\mu_0)$ are characterized by the  (decoupled) Cahn-Hilliard system only.
At the final time instance, however, the coupling of the Cahn-Hilliard and the Navier-Stokes system is maintained;
otherwise, we have little hope to derive some energy estimates for the system.

Finally, we present the optimal control problem for the semi-discrete CHNS system.
For its formulation, 
let $U_{ad}\subset L^2(\Omega;\mathbb{R}^N)^{M-1}$ and $\mathcal{J}:\mathcal{X}\rightarrow\mathbb{R}$ be a Fr\'echet differentiable function,
with 
$$\mathcal{X}:= \overline{H}^1(\Omega)^{M} \times \overline{H}^1(\Omega)^{M}\times H^{1}_{0,\sigma}(\Omega;\mathbb{R}^N)^{M-1}\times L^2(\Omega;\mathbb{R}^N)^{M-1}.$$
Further requirements on $U_{ad}$ and $\mathcal{J}$ are made explicit in connection with the existence result, Theorem 4.1, below.
\begin{definition}\label{optprob}
The optimal control problem is given by
\begin{align*}
\begin{aligned}
&\textnormal{min } \mathcal{J}(\varphi,\mu,v,u)\textnormal{ over } (\varphi,\mu,v,u)\in\mathcal{X}\\
&\textnormal{s.t. }u\in U_{ad},\ (\varphi,\mu,v)\in S_\Psi(u).
\end{aligned} \tag{$P_\Psi$}
\end{align*}
\end{definition}

In many applications, $\mathcal{J}$ is given by a tracking-type functional and $U_{ad}$ by unilateral or bilateral box constraints.

\section{Existence of feasible points}

In this section, we prove the existence of feasible points for the optimization problem ($P_\Psi$).
As stated earlier, for deriving stationarity conditions we will later on approximate the double-obstacle potential by a sequence of smooth potentials of double-well type.
Therefore, 
we consider here the following two types of free energy densities.
\begin{assumption}\label{assPsi}
The functional $\Psi_0:\overline{H}^1(\Omega)\rightarrow\mathbb{R}$ is convex, proper and lower-semicontinuous.
It has one of the two subsequent properties:
\begin{enumerate}
\item Either it is given by $\Psi_0(\varphi):=\int_\Omega \psi_0(\varphi(x)) dx$
where $\psi_0:\mathbb{R}\rightarrow\overline{\mathbb{R}}:=\mathbb{R}\cup\{+\infty\}$ represents the double-obstacle potential,
$$\psi_0(z):=\left\{\begin{array}[c]{ll}
+\infty & \text{if } z< \psi_1,\\
0& \text{if } \psi_1\leq z\leq\psi_2, \\
+\infty & \text{if } z>\psi_2.\\
\end{array} \right.$$
\item Or it originates from a double-well type potential and satisfies:
\begin{enumerate}
\item $\Psi_0$ is Fr\'echet differentiable with $\left\{\Psi_0'(\varphi)\right\}=\partial\Psi_0(\varphi)\subset L^2(\Omega)$
for every $\varphi\in \overline{H}^1(\Omega)$;
\item There exists $B_u\in\mathbb{R}$ such that $\Psi_0(\varphi)\leq B_u$ for every
$\varphi\in\mathbb{K}.$
\end{enumerate}
\end{enumerate}

Additionally, we assume that the functional $\Psi(\varphi):=\Psi_0(\varphi)-\int_\Omega\frac{\kappa}{2}\varphi(x)^2dx,\kappa>0$, is bounded from below by a constant $B_l\in\mathbb{R}$. %
\end{assumption}

We start by studying the semi-discrete CHNS system for a single time step.
For this purpose, assume that
the pair $(\tilde{\varphi},\tilde{v})\in\overline{H}^1(\Omega)\times H^{1}_{0,\sigma}(\Omega;\mathbb{R}^N)$ is
given. 
We then show the existence of a point $(\varphi,\mu,v)$ which solves a slightly modified system. 
Theorem \ref{feaspoint} collects the results for all time steps via an induction argument.
Finally, Theorem \ref{Linftycon} shows that the modified system equals the original CHNS system 
under suitable assumptions.

The starting point for our considerations is an energy estimate for the generalized system.
This estimate will be useful to establish the boundedness of the feasible set.
We note that in what follows, $C$, $C_1$ and $C_2$ denote generic constants which may take different values at different occasions.
\begin{lemma}[Energy estimate for a single time step]\label{energyest}
Let $\tilde{\varphi}\in \overline{H}^1(\Omega)$, %
$\tilde{v}\in H^{1}_{0,\sigma}(\Omega;\mathbb{R}^N)$, $\Theta_v\in (H^{1}_{0,\sigma}(\Omega;\mathbb{R}^N))^*$, $\Theta_\mu,\Theta_\varphi\in \overline{H}^{-1}(\Omega)$,
$\nu\in H^1(\Omega;\mathbb{R}^N)$, $f_0,f_{-1}\in L^2(\Omega)$, $f_0,f_{-1}\geq 0$ 
be given such that
\begin{align}
\frac{f_0 -f_{-1}}{\tau}+\textnormal{div}\nu=0\textnormal{ a.e. on }\Omega.\label{prereq1}
\end{align}
In case of the double-obstacle potential suppose additionally that
$\tilde{\varphi}\in\mathbb{K}$.

Then, if $(\varphi,\mu,v)\in \overline{H}^1(\Omega) \times \overline{H}^1(\Omega)\times H^{1}_{0,\sigma}(\Omega;\mathbb{R}^N)$
solves the system
\begin{align}
\left\langle\frac{\varphi -\tilde{\varphi} }{\tau},\phi\right\rangle
+\left\langle v\nabla\tilde{\varphi},\phi\right\rangle
-\left\langle\textnormal{div}(m(\tilde{\varphi})\nabla\mu),\phi\right\rangle
=\left\langle \Theta_\mu,\phi\right\rangle,\ \forall \phi\in \overline{H}^1(\Omega),\label{A2}\\
-\left\langle \mu,\phi\right\rangle
-\left\langle \kappa\tilde{\varphi},\phi\right\rangle %
+\left\langle -\Delta\varphi,\phi\right\rangle
+\left\langle \partial\Psi_0(\varphi),\phi\right\rangle
=\left\langle \Theta_\varphi,\phi\right\rangle,\ \forall \phi\in \overline{H}^1(\Omega),\label{A3}\\
\left\langle\frac{f_0 v-f_{-1} \tilde{v}}{\tau},\psi\right\rangle_{H^{-1}_{0,\sigma},H^1_{0,\sigma}}
+\left\langle\textnormal{div}(v\otimes \nu),\psi\right\rangle_{H^{-1}_{0,\sigma},H^1_{0,\sigma}}
+\left(2\eta(\tilde{\varphi})\epsilon(v),\epsilon (\psi)\right)\nonumber\\ %
-\left\langle\mu\nabla\tilde{\varphi},\psi\right\rangle_{H^{-1}_{0,\sigma},H^1_{0,\sigma}}
=\left\langle \Theta_v,\psi\right\rangle_{H^{-1}_{0,\sigma},H^1_{0,\sigma}},\ \forall \psi\in H^{1}_{0,\sigma}(\Omega;\mathbb{R}^N),\label{A1}
\end{align} %
the following energy estimate holds true:
\begin{align}
&\int_\Omega\frac{f_0 \left| v\right|^2}{2}dx
+\int_\Omega\frac{\left| \nabla\varphi\right|^2}{2}dx
+\Psi(\varphi)
+\int_\Omega f_{-1} \frac{\left| v -\tilde{v} \right|^2}{2}dx
+\int_\Omega \frac{\left| \nabla\varphi -\nabla\tilde{\varphi} \right|^2}{2}dx\nonumber\\
&\hspace{1cm}+\tau\int_\Omega2\eta(\tilde{\varphi})\left|\epsilon(v)\right|^2dx
+\tau\int_\Omega m(\tilde{\varphi})\left|\nabla\mu\right|^2dx
+\int_\Omega \kappa\frac{(\varphi-\tilde{\varphi})^2}{2}\nonumber\\
&\hspace{2cm}\leq \int_\Omega\frac{f_{-1} \left| \tilde{v}\right|^2}{2}dx
+\int_\Omega\frac{\left| \nabla\tilde{\varphi}\right|^2}{2}dx
+\Psi(\tilde{\varphi})
+g(\varphi,\mu,v),\label{EE}
\end{align}
where $g$ is defined as
\begin{align}
g(\varphi,\mu,v):=\left\langle \Theta_\mu,\mu\right\rangle
+\left\langle \Theta_\varphi,\frac{\varphi-\tilde{\varphi}}{\tau}\right\rangle
+\left\langle \Theta_v,v\right\rangle_{H^{-1}_{0,\sigma},H^1_{0,\sigma}}.
\end{align}
\end{lemma}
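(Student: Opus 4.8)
The plan is to derive the energy estimate by testing each of the three equations \eqref{A2}, \eqref{A3}, \eqref{A1} with carefully chosen test functions and adding the resulting identities, so that the problematic coupling terms cancel. Concretely, I would first test the Navier--Stokes equation \eqref{A1} with $\psi=v$; this produces the kinetic term $\langle (f_0v-f_{-1}\tilde v)/\tau,v\rangle$, the convective term $\langle\operatorname{div}(v\otimes\nu),v\rangle$, the dissipation term $(2\eta(\tilde\varphi)\epsilon(v),\epsilon(v))=\int_\Omega 2\eta(\tilde\varphi)|\epsilon(v)|^2$, the coupling term $-\langle\mu\nabla\tilde\varphi,v\rangle$, and the right-hand side $\langle\Theta_v,v\rangle$. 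Then I would test the Cahn--Hilliard equation \eqref{A2} with $\phi=\mu$, giving $\langle(\varphi-\tilde\varphi)/\tau,\mu\rangle+\langle v\nabla\tilde\varphi,\mu\rangle+\int_\Omega m(\tilde\varphi)|\nabla\mu|^2=\langle\Theta_\mu,\mu\rangle$, and test the variational inequality \eqref{A3} with $\phi=(\varphi-\tilde\varphi)/\tau$.

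The key algebraic observations I would exploit are the following. The two coupling terms $-\langle\mu\nabla\tilde\varphi,v\rangle$ (from \eqref{A1} with $\psi=v$) and $\langle v\nabla\tilde\varphi,\mu\rangle$ (from \eqref{A2} with $\phi=\mu$) are negatives of each other and cancel upon addition. For the kinetic term I would use the elementary identity $f_0|v|^2-f_{-1}\tilde v\cdot v=\tfrac12 f_0|v|^2-\tfrac12 f_{-1}|\tilde v|^2+\tfrac12 f_{-1}|v-\tilde v|^2+\tfrac12(f_0-f_{-1})|v|^2$; the very last term combines with the convective contribution $\langle\operatorname{div}(v\otimes\nu),v\rangle$ and, using the prerequisite \eqref{prereq1} that $(f_0-f_{-1})/\tau+\operatorname{div}\nu=0$ together with an integration by parts of $\langle\operatorname{div}(v\otimes\nu),v\rangle=-\int_\Omega (\nu\cdot\nabla)v\cdot v=\tfrac12\int_\Omega(\operatorname{div}\nu)|v|^2$ (valid since $v\in H^1_{0,\sigma}$), these two pieces cancel exactly. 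For the gradient term in \eqref{A3} I use $\langle-\Delta\varphi,(\varphi-\tilde\varphi)/\tau\rangle=\tfrac1\tau\int_\Omega\nabla\varphi\cdot(\nabla\varphi-\nabla\tilde\varphi)=\tfrac1{2\tau}(|\nabla\varphi|^2-|\nabla\tilde\varphi|^2+|\nabla\varphi-\nabla\tilde\varphi|^2)$ after integrating over $\Omega$. The terms $-\langle\mu,(\varphi-\tilde\varphi)/\tau\rangle$ from \eqref{A3} and $\langle(\varphi-\tilde\varphi)/\tau,\mu\rangle$ from \eqref{A2} cancel. The convexity term $\langle\partial\Psi_0(\varphi),(\varphi-\tilde\varphi)/\tau\rangle$ is bounded below by $(\Psi_0(\varphi)-\Psi_0(\tilde\varphi))/\tau$ by the subgradient inequality (in the double-obstacle case one needs $\tilde\varphi\in\mathbb{K}$ for $\Psi_0(\tilde\varphi)$ to be finite, which is exactly the extra hypothesis), and the $-\kappa\langle\tilde\varphi,(\varphi-\tilde\varphi)/\tau\rangle$ term I rewrite via $-\tilde\varphi(\varphi-\tilde\varphi)=-\tfrac12\varphi^2+\tfrac12\tilde\varphi^2+\tfrac12(\varphi-\tilde\varphi)^2$ so that, combined with $\Psi_0$, it produces $(\Psi(\varphi)-\Psi(\tilde\varphi))/\tau+\tfrac{\kappa}{2\tau}\int_\Omega(\varphi-\tilde\varphi)^2$, where $\Psi=\Psi_0-\tfrac\kappa2\|\cdot\|^2$.

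After adding the three tested identities, multiplying the Cahn--Hilliard contributions by $\tau$ where needed so all terms are on a common footing, and moving the right-hand side pieces $\langle\Theta_\mu,\mu\rangle$, $\langle\Theta_\varphi,(\varphi-\tilde\varphi)/\tau\rangle$, $\langle\Theta_v,v\rangle$ to the right (these constitute exactly $g(\varphi,\mu,v)$), one arrives at \eqref{EE}. I would be careful to track the factors of $\tau$: testing \eqref{A2} with $\mu$ and \eqref{A3} with $(\varphi-\tilde\varphi)/\tau$ keeps the $\tau$-weighted dissipation $\tau\int_\Omega m(\tilde\varphi)|\nabla\mu|^2$ and the $\tau$-weighted viscous term $\tau\int_\Omega 2\eta(\tilde\varphi)|\epsilon(v)|^2$ in the correct places, which I ensure by multiplying the whole \eqref{A1}-identity by $\tau$ as well.

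The main obstacle, and the step requiring the most care, is the simultaneous cancellation of the convective term $\langle\operatorname{div}(v\otimes\nu),v\rangle$ against the ``extra'' half-density increment $\tfrac12(f_0-f_{-1})|v|^2$: this is where the precise form of the time-discretization and the compatibility condition \eqref{prereq1} are essential, and one must justify the integration by parts $\langle\operatorname{div}(v\otimes\nu),v\rangle=\tfrac12\int_\Omega(\operatorname{div}\nu)|v|^2$ despite $\nu$ being only $H^1$ and $v\otimes\nu$ not obviously having an $L^2$-divergence in the classical sense — this should be read in the duality $\langle\cdot,\cdot\rangle_{H^{-1}_{0,\sigma},H^1_{0,\sigma}}$ and handled by a density argument, using $v\in H^1_{0,\sigma}$ so that $v=0$ on $\partial\Omega$ kills the boundary term. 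A secondary technical point is making sure the subgradient term is well-defined and the estimate $\langle\partial\Psi_0(\varphi),(\varphi-\tilde\varphi)/\tau\rangle\ge(\Psi_0(\varphi)-\Psi_0(\tilde\varphi))/\tau$ is applied correctly in both cases of Assumption \ref{assPsi} (in the double-well case $\partial\Psi_0(\varphi)=\{\Psi_0'(\varphi)\}\subset L^2$ so the pairing is an $L^2$ inner product; in the double-obstacle case one uses the finiteness of $\Psi_0(\tilde\varphi)$ guaranteed by $\tilde\varphi\in\mathbb{K}$).
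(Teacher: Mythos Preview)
Your proposal is correct and follows essentially the same approach as the paper: test \eqref{A1} with $v$, \eqref{A2} with $\mu$, \eqref{A3} with $(\varphi-\tilde\varphi)/\tau$, exploit the cancellations and the identity $\langle\operatorname{div}(v\otimes\nu),v\rangle=\tfrac12\int_\Omega(\operatorname{div}\nu)|v|^2$ together with \eqref{prereq1}, the algebraic identity $2a(a-b)=a^2-b^2+(a-b)^2$, and the subgradient inequality for $\Psi_0$. The only cosmetic difference is the handling of $\tau$: the paper sums the three tested identities as they stand (so the difference quotients carry $1/\tau$) and then multiplies the resulting inequality by $\tau$ once at the end, rather than rescaling individual equations beforehand as you suggest.
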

%
\begin{proof}
First, we observe that
\begin{align}
\left( \textnormal{div}(v\otimes \nu),v\right)
&=\left( (\textnormal{div}\nu)v+(\nu\cdot\nabla)v,v\right)\nonumber\\
&=\int_\Omega((\textnormal{div}\nu)\frac{v}{2}+(\nu\cdot\nabla)v)vdx
+\int_\Omega(\textnormal{div}\nu)\frac{v}{2}vdx\nonumber\\
&=\int_\Omega\textnormal{div}\left(\nu\frac{\left|v\right|^2}{2}\right)
+(\textnormal{div}\nu)\frac{\left|v\right|^2}{2}dx
=\int_\Omega(\textnormal{div}\nu)\frac{\left|v\right|^2}{2}dx.\label{helpeq}
\end{align}
Next, one verifies 
\begin{align}
\left(f_0 v-f_{-1} \tilde{v},v\right)
&=\int_\Omega\frac{f_0 \left| v\right|^2}{2}dx
-\int_\Omega\frac{f_{-1} \left| \tilde{v}\right|^2}{2}dx\nonumber\\
&\hspace{1cm}+\int_\Omega\frac{(f_0-f_{-1}) \left| v\right|^2}{2}dx
+\int_\Omega\frac{f_{-1} \left| v-\tilde{v}\right|^2}{2}dx.\label{hoho}
\end{align}
Testing (\ref{A2}),(\ref{A3}) and (\ref{A1}) with $\mu$, $\frac{\varphi -\tilde{\varphi} }{\tau}$ and $v$,  respectively, summing up and integrating by parts, we obtain
\begin{align} %
0&=\int_\Omega\frac{f_0 \left| v\right|^2-f_{-1} \left| \tilde{v}\right|^2}{2\tau}dx
+\int_\Omega f_{-1}\frac{\left| v- \tilde{v}\right|^2}{2\tau}dx
+\int_\Omega\frac{(f_0-f_{-1}) \left| v\right|^2}{2\tau}dx\nonumber\\
&\hspace{1cm}+\int_\Omega(\textnormal{div}\nu)\frac{\left|v\right|^2}{2}dx
+\int_\Omega2\eta(\tilde{\varphi})\left|\epsilon(v)\right|^2dx
+\int_\Omega m(\tilde{\varphi})\left|\nabla\mu\right|^2dx\nonumber\\ %
&\hspace{2cm}+\frac{1}{\tau}\left\langle\partial\Psi_0(\varphi),\varphi -\tilde{\varphi}\right\rangle_{\overline{H}^{-1},\overline{H}^1}
-\kappa\int_\Omega\tilde{\varphi}\frac{\varphi -\tilde{\varphi} }{\tau}dx\nonumber\\
&\hspace{3cm}+\frac{1}{\tau}\int_\Omega\nabla\varphi(\nabla\varphi -\nabla\tilde{\varphi})dx
-g(\varphi,\mu,v),\label{h4*}
\end{align}
where we also use the previous equations (\ref{helpeq}) and (\ref{hoho}).
From the definition of the subdifferential we infer
\begin{align}
\left\langle\partial\Psi_0(\varphi),\varphi -\tilde{\varphi}\right\rangle 
\geq 
\Psi(\varphi)-\Psi(\tilde{\varphi})+\frac{\kappa}{2}\int_\Omega\varphi^2-\tilde{\varphi}^2dx.\label{h1}
\end{align}
Inserting (\ref{prereq1}),(\ref{h1}) into (\ref{h4*}) and using  $2a(a-b)=a^2-b^2+(a-b)^2$ once for $(a,b)=(\nabla\varphi,\nabla\tilde{\varphi})$
and then for $(a,b)=(\tilde{\varphi},\varphi)$ we obtain the assertion.
\end{proof}
\begin{remark}\label{settin}
Note that the system (\ref{A2})-(\ref{A1}) corresponds to the system (\ref{firsttim1})-(\ref{firsttim3}) for one time step only
when choosing 
\begin{align*}
&\tilde{v}=v_i,\
\tilde{\varphi}=\varphi_i,\
f_0
=\rho(\varphi_i),\
f_{-1}
=\rho(\varphi_{i-1}),\\
&\nu
=\rho(\varphi_{i-1})v_i-\frac{\rho_2-\rho_1}{2}m(\varphi_{i-1})\nabla\mu_i,\\
&\Theta_v
=u,\
\Theta_\varphi
=\Theta_\mu
=0.
\end{align*}
\end{remark}
Now we prove the existence of solutions to the system (\ref{A2})-(\ref{A1}).
The proof mainly relies on the application of Schaefer's fixed point theorem, also called the Leray-Schauder principle,
and combines arguments from \cite[Lemma 4.3]{Abels2013} and monotone operator theory.
\begin{theorem}[Existence of solutions to the CHNS system for a single time step]\label{Exist!} %
Let the assumptions of Lemma \ref{energyest} be satisfied.
Then the system (\ref{A2})-(\ref{A1}) has a solution
$(\varphi,\mu,v)\in \overline{H}^1(\Omega) \times \overline{H}^1(\Omega)\times H^{1}_{0,\sigma}(\Omega;\mathbb{R}^N)$.
\end{theorem}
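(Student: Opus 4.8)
\emph{Proof (sketch).} The plan is to decouple (\ref{A2})--(\ref{A1}) into its Cahn--Hilliard part (\ref{A2})--(\ref{A3}) and its Navier--Stokes part (\ref{A1}), to solve the two subsystems separately, and to recouple the resulting solution operators by a Leray--Schauder fixed-point argument in the velocity, the required a priori bound being furnished by the energy estimate of Lemma~\ref{energyest}. I carry out the argument for $N=3$, the case $N=2$ being analogous and easier. For the Navier--Stokes part, fix $\mu\in\overline{H}^1(\Omega)$; then (\ref{A1}) is linear in $v$ with bilinear form
$$a(w,\psi):=\left\langle\tfrac{f_0 w}{\tau},\psi\right\rangle_{H^{-1}_{0,\sigma},H^1_{0,\sigma}}+\left\langle\textnormal{div}(w\otimes \nu),\psi\right\rangle_{H^{-1}_{0,\sigma},H^1_{0,\sigma}}+\left(2\eta(\tilde{\varphi})\epsilon(w),\epsilon(\psi)\right),$$
which is bounded on $H^{1}_{0,\sigma}(\Omega;\mathbb{R}^N)$ by Assumption~\ref{assum1}, $f_0\in L^\infty(\Omega)$, $\nu\in H^1(\Omega;\mathbb{R}^N)\hookrightarrow L^6(\Omega;\mathbb{R}^N)$ and Sobolev embeddings. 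Using the identity (\ref{helpeq}), $\left\langle\textnormal{div}(w\otimes\nu),w\right\rangle_{H^{-1}_{0,\sigma},H^1_{0,\sigma}}=\int_\Omega(\textnormal{div}\,\nu)\tfrac{|w|^2}{2}\,dx$, so that (\ref{prereq1}) together with $f_0,f_{-1}\geq 0$ gives
$$a(w,w)=\int_\Omega\tfrac{(f_0+f_{-1})|w|^2}{2\tau}\,dx+\int_\Omega 2\eta(\tilde{\varphi})|\epsilon(w)|^2\,dx\ \geq\ 2b_1\|\epsilon(w)\|^2\ \geq\ c\,\|w\|^2_{H^{1}_{0,\sigma}},$$
the last inequality by Korn's and Poincaré's inequalities. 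By the Lax--Milgram lemma (in its non-symmetric form) (\ref{A1}) then has a unique solution $v=:\mathcal{S}_{\mathrm{NS}}(\mu)$, and since $\mu$ enters (\ref{A1}) only through the term $\mu\nabla\tilde{\varphi}\in(H^{1}_{0,\sigma}(\Omega;\mathbb{R}^N))^*$, the operator $\mathcal{S}_{\mathrm{NS}}$ is affine and Lipschitz from $\overline{H}^1(\Omega)$ into $H^{1}_{0,\sigma}(\Omega;\mathbb{R}^N)$.

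For the Cahn--Hilliard part, let $\mathcal{N}:\overline{H}^{-1}(\Omega)\to\overline{H}^1(\Omega)$ be the bounded, linear, symmetric and positive solution operator associated with the weighted Neumann problem $-\textnormal{div}(m(\tilde{\varphi})\nabla\,\cdot\,)$; for $v\in L^3(\Omega;\mathbb{R}^N)$ one has $v\nabla\tilde{\varphi}\in\overline{H}^{-1}(\Omega)$ since $\nabla\tilde{\varphi}\in L^2(\Omega)$. Solving (\ref{A2}) for $\mu$ and substituting into (\ref{A3}) reduces (\ref{A2})--(\ref{A3}) to the single variational inclusion
$$B\varphi+\partial\Psi_0(\varphi)\ \ni\ \mathcal{N}\!\left(\Theta_\mu+\tfrac{\tilde{\varphi}}{\tau}-v\nabla\tilde{\varphi}\right)+\kappa\tilde{\varphi}+\Theta_\varphi\quad\text{in }\overline{H}^{-1}(\Omega),\qquad B:=-\Delta+\tfrac1\tau\mathcal{N},$$
where $B$ is bounded, linear, symmetric and coercive on $\overline{H}^1(\Omega)$ (indeed $\left\langle B\varphi,\varphi\right\rangle\geq\|\nabla\varphi\|^2$). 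For the double-obstacle potential, $\partial\Psi_0=\partial I_{\mathbb{K}}$ is maximal monotone with $0\in\mathbb{K}$, so that $B+\partial\Psi_0$ is maximal monotone and coercive, hence surjective; for the double-well potential, $\Psi_0'$ is monotone and hemicontinuous by Assumption~\ref{assPsi} and $B+\Psi_0'$ is again coercive and surjective by the Browder--Minty theorem. In either case $\varphi$ is uniquely determined, and testing the difference of two such inclusions (corresponding to two velocities) with the difference of the solutions, and exploiting the monotonicity of $\partial\Psi_0$ and the coercivity of $B$, shows that $v\mapsto\varphi$ — and hence $v\mapsto\mu=\mathcal{N}\bigl(\Theta_\mu-\tfrac{\varphi-\tilde{\varphi}}{\tau}-v\nabla\tilde{\varphi}\bigr)=:\mathcal{S}_{\mathrm{CH}}(v)$ — is Lipschitz from $L^3(\Omega;\mathbb{R}^N)$ into $\overline{H}^1(\Omega)$.

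Let $\iota$ denote the compact embedding $H^{1}_{0,\sigma}(\Omega;\mathbb{R}^N)\hookrightarrow L^3(\Omega;\mathbb{R}^N)$ and set $T:=\mathcal{S}_{\mathrm{NS}}\circ\mathcal{S}_{\mathrm{CH}}\circ\iota$; by the two preceding steps $T$ is a continuous and compact self-map of $H^{1}_{0,\sigma}(\Omega;\mathbb{R}^N)$, and any fixed point $v=Tv$ together with the associated pair $(\varphi,\mu)$ is exactly a solution of (\ref{A2})--(\ref{A1}). Consider the homotopy $T_\lambda$, $\lambda\in[0,1]$, obtained from $T$ by multiplying the coupling terms $v\nabla\tilde{\varphi}$ in (\ref{A2}) and $\mu\nabla\tilde{\varphi}$ in (\ref{A1}) by $\lambda$; then $T_1=T$, while $T_0$ is constant because the two subsystems decouple at $\lambda=0$. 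For every $\lambda$ and every $v=T_\lambda v$ the computation in the proof of Lemma~\ref{energyest} applies verbatim — the two coupling contributions still appear as $+\lambda\langle v\nabla\tilde{\varphi},\mu\rangle$ and $-\lambda\langle v\nabla\tilde{\varphi},\mu\rangle$ and cancel — and, absorbing the linear terms collected in $g$ into the viscous, gradient and $\|\nabla\mu\|^2$ contributions on the left-hand side by Young's inequality (using $\Psi\geq B_l$ together with the Poincaré and Korn inequalities), one obtains a bound $\|v\|_{H^{1}_{0,\sigma}}\leq C$ with $C$ independent of $\lambda$. The Leray--Schauder continuation principle therefore provides a fixed point of $T_1=T$, which is the sought solution $(\varphi,\mu,v)\in\overline{H}^1(\Omega)\times\overline{H}^1(\Omega)\times H^{1}_{0,\sigma}(\Omega;\mathbb{R}^N)$.

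I expect the two genuinely delicate points to be the following. First, the coercivity of $a$: since $\nu$ is not divergence-free, this is \emph{not} the familiar antisymmetry of the convective trilinear form of the Navier--Stokes equations, but rests on the sign conditions $f_0,f_{-1}\geq0$ and the compatibility (\ref{prereq1}) — precisely the algebraic structure underlying Lemma~\ref{energyest} — and one must ensure that the fixed-point homotopy preserves this structure as well as the cancellation of the coupling terms, since otherwise the a priori estimate is lost for small time steps $\tau$. Second, the compactness of $T$: in three space dimensions it closes only because the embeddings $H^1(\Omega)\hookrightarrow L^3(\Omega)$ (compact) and $L^{6/5}(\Omega)\hookrightarrow\overline{H}^{-1}(\Omega)$ just suffice, used in conjunction with the monotone-operator (Lipschitz) dependence of the Cahn--Hilliard subsystem on the velocity.
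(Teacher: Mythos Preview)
Your argument is correct but takes a genuinely different route from the paper. The paper writes the system as $\mathcal{G}(\varphi,\mu,v)\ni\mathcal{F}(\varphi,\mu,v)$, where $\mathcal{G}$ is the diagonal operator collecting only the three principal elliptic parts $-\mathop{\rm div}(m(\tilde\varphi)\nabla\,\cdot\,)$, $-\Delta+\partial\Psi_0$, $-\mathop{\rm div}(2\eta(\tilde\varphi)\epsilon(\,\cdot\,))$, and $\mathcal{F}$ absorbs \emph{all} remaining (time-difference, transport, and coupling) terms; it then applies Schaefer's theorem directly to the compact map $\mathcal{F}\circ\mathcal{G}^{-1}$ on $L^{3/2}(\Omega)^3$, so that the a~priori bound has to be established for the system with the whole of $\mathcal{F}$ scaled by $\lambda$. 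You instead solve each \emph{physical} subsystem in full --- Navier--Stokes by Lax--Milgram (using (\ref{prereq1}) and $f_0,f_{-1}\geq0$ for coercivity, which is the key point you correctly flag) and Cahn--Hilliard by monotone-operator surjectivity for $B+\partial\Psi_0$ --- and then close a Leray--Schauder fixed point on the velocity alone, with a homotopy that scales only the two genuine coupling terms $v\nabla\tilde\varphi$ and $\mu\nabla\tilde\varphi$.

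What each approach buys: the paper's splitting is lighter on preliminaries (three decoupled elliptic inversions, no Lipschitz dependence of the full subsystems to verify) and fits Schaefer's theorem off the shelf; your route is more structural, yields reusable solution operators $\mathcal{S}_{\mathrm{NS}}$, $\mathcal{S}_{\mathrm{CH}}$, and makes the cancellation mechanism in the energy estimate particularly transparent, at the price of needing a degree/continuation argument rather than the plain Schaefer alternative. One minor slip: under the hypotheses of Lemma~\ref{energyest} you only have $f_0\in L^2(\Omega)$, not $L^\infty(\Omega)$; this is harmless, since on a bounded domain $L^2\subset L^{3/2}$ and $\big|\int_\Omega f_0\,w\cdot\psi\big|\le\|f_0\|_{L^{3/2}}\|w\|_{L^6}\|\psi\|_{L^6}$ already gives the boundedness of $a$.
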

\begin{proof} We start by defining 
\begin{align}
X:=\overline{H}^1(\Omega) \times \overline{H}^1(\Omega)\times H^{1}_{0,\sigma}(\Omega;\mathbb{R}^N),\\
Y:=\overline{H}^{-1}(\Omega) \times \overline{H}^{-1}(\Omega)\times H^{1}_{0,\sigma}(\Omega;\mathbb{R}^N)^*,
\end{align}
and the operators
$\mathcal{G}_1:\overline{H}^1(\Omega)\rightarrow \overline{H}^{-1}(\Omega)$,
$\mathcal{G}_2:\overline{H}^1(\Omega)\rightrightarrows \overline{H}^{-1}(\Omega)$,
$\mathcal{G}_3:H^{1}_{0,\sigma}(\Omega;\mathbb{R}^N)\rightarrow H^{1}_{0,\sigma}(\Omega;\mathbb{R}^N)^*$,
$\mathcal{G}:X\rightrightarrows Y$
and $\mathcal{F}:X\rightarrow Y$ (here and below '$\rightrightarrows$' indicates a set-valued mapping) via
\begin{align*}
\mathcal{G}_1(\mu):=-\textnormal{div}(m(\tilde{\varphi})\nabla\mu)-\Theta_\mu,&
\ \mathcal{G}_2(\varphi):=-\Delta\varphi+\partial\Psi_0(\varphi)-\Theta_\varphi,\\
\mathcal{G}_3(v):=-\textnormal{div}(2\eta(\tilde{\varphi})\epsilon(v))- \Theta_v,&\\
\mathcal{G}(\varphi,\mu,v):=\left(\mathcal{G}_1(\mu),\mathcal{G}_2(\varphi),\mathcal{G}_3(v)\right)^\top,&
\ \mathcal{F}(\varphi,\mu,v):=(\mathcal{F}_1,\mathcal{F}_2,\mathcal{F}_3)^\top,
\end{align*}
with
\begin{align*}
\mathcal{F}_1(\varphi,\mu,v)
&:=-\frac{\varphi -\tilde{\varphi} }{\tau}
-v\nabla\tilde{\varphi},
\ \mathcal{F}_2(\varphi,\mu,v)
:=\mu
+\kappa\tilde{\varphi},\\
\mathcal{F}_3(\varphi,\mu,v)
&:=-\frac{f_0 v-f_{-1} \tilde{v}}{\tau}
-\textnormal{div}(v\otimes \nu)
+\mu\nabla\tilde{\varphi}.
\end{align*} 
Using this notation, the system (\ref{A2})-(\ref{A1}) can be stated as 
\begin{align}
0\in\mathcal{G}(\varphi,\mu,v)-\mathcal{F}(\varphi,\mu,v)\subset Y.\label{fixeq} %
\end{align}
By standard arguments, the mappings $\mathcal{G}_1$ and $\mathcal{G}_3$ are invertible
and the respective inverse mapping is continuous.
Since the Laplace operator is invertible from $\overline{H}^1(\Omega)$ to $\overline{H}^{-1}(\Omega)$
and the subdifferential $\partial\Psi_0$ is maximal monotone (cf. \cite[Theorem A]{Rockafellar1970}), $\mathcal{G}_2$ is invertible, as well.
Concerning the continuity of $\mathcal{G}_2^{-1}$, let $\xi_1,\xi_2\in \overline{H}^{-1}(\Omega)$ and $\varphi_1,\varphi_2\in \overline{H}^1(\Omega)$ satisfy $\varphi_j=\mathcal{G}_2^{-1}(\xi_j)$ for $j=1,2$.
Using Poincar\'e's inequality and the monotonicity of $\partial\Psi_0$, we immediately obtain
\begin{align*}
\left\|\varphi_2-\varphi_1\right\|^2_{H^1}
&\leq C(\left\langle -\Delta(\varphi_2-\varphi_1),\varphi_2-\varphi_1\right\rangle 
+\left\langle \partial\Psi_0(\varphi_2)-\partial\Psi_0(\varphi_1),\varphi_2-\varphi_1\right\rangle)\\ 
&= C\left\langle \xi_2-\xi_1,\varphi_2-\varphi_1\right\rangle 
\leq C\left\|\xi_2-\xi_1\right\|_{H^{-1}}\left\|\varphi_2-\varphi_1\right\|_{H^1},
\end{align*}
showing the continuity of $\mathcal{G}_2^{-1}$.

Due to the compact embedding of the space 
$\overline{Y}:=L^{\frac{3}{2}}(\Omega) \times L^\frac{3}{2}(\Omega)\times L^{\frac{3}{2}}(\Omega;\mathbb{R}^N)$,
into $Y$,
the inverse of $\mathcal{G}$ is a compact operator from $\overline{Y}$ to ${X}$.
Further, $\mathcal{F}:{X}\rightarrow\overline{Y}$ is continuous.
Hence, the operator $\mathcal{F}\circ\mathcal{G}^{-1}:\overline{Y}\rightarrow\overline{Y}$ is compact.

In what follows, we show the existence of a solution $\delta^*$ to the fixed point equation
\begin{align}
\delta^*-\mathcal{F}\circ\mathcal{G}^{-1}(\delta^*)=0\in\overline{Y}.\label{fixx}
\end{align}
Then it immediately follows that $\mathcal{G}^{-1}(\delta^*)$ solves the system (\ref{A2})-(\ref{A1}).
In order to apply Schaefer's theorem with respect to the operator $\mathcal{F}\circ\mathcal{G}^{-1}$ we verify the condition
that the set $D:= \bigcup_{0\leq\lambda\leq1}\left\{\delta\in\overline{Y}|\delta=\lambda\mathcal{F}\circ\mathcal{G}^{-1}(\delta)\right\}$ is bounded.
For this purpose,
assume that $\delta\in\overline{Y}$ and $\lambda\in[0,1]$ satisfy
\begin{align}
\delta=\lambda\mathcal{F}\circ\mathcal{G}^{-1}(\delta), \label{leray}
\end{align}
and define $(\varphi,\mu,v):=\mathcal{G}^{-1}(\delta)\in{X}$.
Thus, (\ref{leray}) can be rewritten as
\begin{align}
\mathcal{G}(\varphi,\mu,v)-\lambda\mathcal{F}(\varphi,\mu,v)=0
\end{align}
which is equivalent to the following system of equations
\begin{align*}
\left\langle\lambda\frac{\varphi -\tilde{\varphi} }{\tau},\phi\right\rangle
+\left\langle\lambda v\nabla\tilde{\varphi},\phi\right\rangle 
=\left\langle\textnormal{div}(m(\tilde{\varphi})\nabla\mu),\phi\right\rangle 
+\left\langle \Theta_\mu,\phi\right\rangle ,\ \forall \phi\in \overline{H}^1(\Omega),\\ 
\left\langle\lambda\mu,\phi\right\rangle 
+\left\langle\lambda\kappa\tilde{\varphi},\phi\right\rangle %
=\left\langle-\Delta\varphi,\phi\right\rangle  
+\left\langle\partial\Psi_0(\varphi)),\phi\right\rangle 
-\left\langle \Theta_\varphi,\phi\right\rangle ,\ \forall \phi\in \overline{H}^1(\Omega),\\ 
\lambda\left\langle\frac{f_0 v-f_{-1} \tilde{v}}{\tau},\psi\right\rangle_{H^{-1}_{0,\sigma},H^1_{0,\sigma}}
+\lambda\left\langle\textnormal{div}( v\otimes \nu),\psi\right\rangle_{H^{-1}_{0,\sigma},H^1_{0,\sigma}}
+\left(2\eta(\tilde{\varphi})\epsilon(v),\epsilon (\psi)\right)\nonumber\\ %
=\lambda\left\langle\mu\nabla\tilde{\varphi},\psi\right\rangle_{H^{-1}_{0,\sigma},H^1_{0,\sigma}}
+\left\langle \Theta_v,\psi\right\rangle_{H^{-1}_{0,\sigma},H^1_{0,\sigma}} ,\ \forall \psi\in H^{1}_{0,\sigma}(\Omega;\mathbb{R}^N).
\end{align*} %
Analogously to the proof of Lemma \ref{energyest}, we test this system by $\mu$, $\frac{\varphi -\tilde{\varphi} }{\tau}$ and $v$, respectively,
sum up the resulting equations and integrate by parts to derive
\begin{align}
0&=\lambda \int_\Omega\frac{f_0 \left| v\right|^2-f_{-1} \left| \tilde{v}\right|^2}{2\tau}dx
+\lambda \int_\Omega f_{-1}\frac{\left| v- \tilde{v}\right|^2}{2\tau}dx
+\int_\Omega2\eta(\tilde{\varphi})\left|\epsilon(v)\right|^2dx\nonumber\\
&\hspace{1cm}+\int_\Omega m(\tilde{\varphi})\left|\nabla\mu\right|^2dx %
+\frac{1}{\tau}\int_\Omega\partial\Psi_0(\varphi)(\varphi -\tilde{\varphi})dx
-\lambda\kappa\int_\Omega\tilde{\varphi}\frac{\varphi -\tilde{\varphi} }{\tau}dx\nonumber\\
&\hspace{2cm}+\frac{1}{\tau}\int_\Omega\nabla\varphi(\nabla\varphi -\nabla\tilde{\varphi})dx
-g(\varphi,\mu,v),\label{hff1}
\end{align} 
which leads to
\begin{align}
\int_\Omega2\eta(\tilde{\varphi})\left|\epsilon(v)\right|^2dx
+\int_\Omega m(\tilde{\varphi})\left|\nabla\mu\right|^2dx %
+\frac{1}{\tau}\Psi(\varphi)
+\frac{1}{\tau}\int_\Omega\left|\nabla\varphi\right|^2dx
-g(\varphi,\mu,v)\nonumber\\
\leq\lambda \int_\Omega\frac{f_{-1} \left| \tilde{v}\right|^2}{2\tau}dx %
+\frac{1}{\tau}\int_\Omega\left|\nabla\tilde{\varphi}\right|^2dx
+\frac{1}{\tau}\Psi(\tilde{\varphi}).\label{hff2} %
\end{align}
Note that for obtaining (\ref{hff1}) we also make use of (\ref{prereq1}).
The right-hand side of (\ref{hff2}) can be bounded by a constant ${C}:={C(N,\Omega,\tau,f_{-1},\tilde{v},\tilde{\varphi})}>0$ which  is independent of $\lambda$.
Since $\Psi$ is bounded from below,
this leads to
\begin{align}
\int_\Omega2\eta(\tilde{\varphi})\left|\epsilon(v)\right|^2dx
+\int_\Omega m(\tilde{\varphi})\left|\nabla\mu\right|^2dx
+\frac{1}{\tau}\int_\Omega\left|\nabla\varphi\right|^2dx
\leq C+g(\varphi,\mu,v).
\end{align}
Due to Korn's inequality, Poincar\'e's inequality and from the boundedness of $\eta(\cdot)$ and $m(\cdot)$, we infer
\begin{align}
\left\|v\right\|^2_{H^1}
+\left\|\mu\right\|^2_{H^1}
+\left\|\varphi\right\|^2_{H^1}
&\leq C+g(\varphi,\mu,v)\nonumber\\
&\leq C_1+C_2(\left\|v\right\|_{H^1}
+\left\|\mu\right\|_{H^1}
+\left\|\varphi\right\|_{H^1}),
\end{align}
where $C_2>0$ depends only on $\Theta_\mu$, $\Theta_\varphi$ and $\Theta_v$. 
The last inequality yields the boundedness of $(\varphi,\mu,v)$ in $X$. 
Next, we derive bounds for $\mathcal{F}$.
In fact, we have
\begin{align*}
\left\|\mathcal{F}_1(\varphi,\mu,v)\right\|_{L^{3/2}}
&\leq C( \left\|\varphi\right\|+\left\|\tilde{\varphi}\right\|+\left\|v\right\|_{H^1}\left\|\tilde{\varphi}\right\|_{H^1}),\\
\left\|\mathcal{F}_2(\varphi,\mu,v)\right\|_{L^{{3}/{2}}}
&\leq C(\left\|\mu\right\|+\left\|\tilde{\varphi}\right\|),\\
\left\|\mathcal{F}_3(\varphi,\mu,v)\right\|_{L^{{3}/{2}}}
&\leq C( \left\|v\right\|_{H^1}+\left\|v\right\|_{H^1}\left\|\nu\right\|_{H^1}+\left\|\mu\right\|\left\|\tilde{\varphi}\right\|_{H^1}+\left\|\tilde{v}\right\|_{H^1}).
\end{align*}
Since $\tilde{\varphi}$, $\tilde{v}$ and $\nu$ are fixed, $D$ 
is bounded in $\overline{Y}$. 
Hence Schaefer's theorem is applicable
implying that equation (\ref{fixx}) admits a fixed point $\delta^*\in\overline{Y}$. Then $\mathcal{G}^{-1}(\delta^*)$ solves the system (\ref{A2})-(\ref{A1}).
\end{proof}  

In our setting, the right-hand sides of the system (\ref{A2})-(\ref{A1}) are square integrable functions.
This enables the derivation of higher regularity properties for the solutions obtained in Theorem \ref{Exist!}.
\begin{lemma}[Regularity of solutions]\label{regsol}
Let the assumptions of Lemma \ref{energyest} be satisfied, and suppose additionally that 
$\Theta_\mu,\Theta_\varphi\in L^{2}(\Omega)$, 
as well as $f_0,f_{-1}\in L^3(\Omega)$ and $\tilde{\varphi}\in H^2(\Omega)$.

Then it holds that $\varphi,\mu\in\overline{H}^2_{\partial_n}(\Omega)$ and $v\in H^2(\Omega;\mathbb{R}^N)$,
provided that $(\varphi,\mu,v)\in \overline{H}^1(\Omega) \times \overline{H}^1(\Omega)\times H^{1}_{0,\sigma}(\Omega;\mathbb{R}^N)$ satisfies the system (\ref{A2})-(\ref{A1}).
Moreover, there exists a constant $C=C(N,\Omega,b_1,b_2,\tau,\kappa)>0$ such that
\begin{align}
&\left\|\varphi\right\|_{H^2}
+\left\|\mu\right\|_{H^2}
+\left\|v\right\|_{H^2} \nonumber\\
&\hspace{1cm}\leq C (\left\|\varphi\right\|+\left\|\mu\right\|+\left\|\tilde{\varphi}\right\|+\left\|\Theta_\varphi\right\|
+\left\|\Theta_\mu\right\|
+\left\|v\right\|_{H^1}\left\|\tilde\varphi\right\|_{H^2}
+\left\|\Psi_0'(\varphi)\right\|).
\end{align}
In case of the double-obstacle potential,
it also holds that $\varphi\in \mathbb{K}$ and the term $\left\|\Psi_0'(\varphi)\right\|$ in the above inequality is dropped. 
\end{lemma}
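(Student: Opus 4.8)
\emph{Plan of proof.} The plan is to read each of the three relations (\ref{A2}), (\ref{A3}), (\ref{A1}) as a \emph{decoupled} second-order elliptic problem -- for $\mu$, for $\varphi$, and for $v$ respectively -- in which the remaining unknowns enter only through the right-hand side, and then to bootstrap, using the assumed integrability of the data together with the $H^1$-bounds already available from Theorem \ref{Exist!}. Two facts will be used repeatedly: since $N\le 3$ one has $H^1(\Omega)\hookrightarrow L^6(\Omega)$; and since $\tilde{\varphi}\in H^2(\Omega)$ one has $\nabla\tilde{\varphi}\in H^1(\Omega)\hookrightarrow L^6(\Omega)$, while by Assumption \ref{assum1}.1 the coefficients $m(\tilde{\varphi})$ and $\eta(\tilde{\varphi})$ lie in $W^{1,q}(\Omega)\cap C(\overline{\Omega})$ for some $q>N$ and are bounded below by $b_1>0$.

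\emph{The Cahn--Hilliard part.} Equation (\ref{A2}) is the weak form of the Neumann problem $-\textnormal{div}(m(\tilde{\varphi})\nabla\mu)=\Theta_\mu-\tau^{-1}(\varphi-\tilde{\varphi})-v\nabla\tilde{\varphi}$ in $\Omega$, $\partial_n\mu_{|\partial\Omega}=0$; its right-hand side lies in $L^2(\Omega)$, since $\Theta_\mu\in L^2$, $\varphi-\tilde{\varphi}\in L^2$, and $v\nabla\tilde{\varphi}\in L^3(\Omega)\subset L^2(\Omega)$ because $v,\nabla\tilde{\varphi}\in L^6$. A short bootstrap -- first raising $\nabla\mu$ from $L^2$ to $L^3$ via $W^{2,3/2}$-regularity of the Laplacian, so that the term $m'(\tilde{\varphi})\nabla\tilde{\varphi}\cdot\nabla\mu$ coming from the variable coefficient becomes square integrable -- then gives $\mu\in\overline{H}^2_{\partial_n}(\Omega)$ with a bound in terms of $\|\mu\|$, $\|\Theta_\mu\|$, $\|\varphi\|$, $\|\tilde{\varphi}\|$ and $\|v\|_{H^1}\|\tilde{\varphi}\|_{H^2}$. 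For (\ref{A3}), which reads $-\Delta\varphi+\partial\Psi_0(\varphi)\ni\zeta:=\Theta_\varphi+\mu+\kappa\tilde{\varphi}$ with $\zeta\in L^2(\Omega)$ (using $\mu\in H^1$), I would distinguish the two cases of Assumption \ref{assPsi}. In the double-well case $\partial\Psi_0(\varphi)=\{\Psi_0'(\varphi)\}\subset L^2$, so $-\Delta\varphi=\zeta-\Psi_0'(\varphi)\in L^2$ and Neumann-Laplacian regularity yields $\varphi\in\overline{H}^2_{\partial_n}(\Omega)$, with the extra term $\|\Psi_0'(\varphi)\|$ in the estimate. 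In the double-obstacle case $\Psi_0$ is, on $\overline{H}^1(\Omega)$, the indicator of $\mathbb{K}$, so $\partial\Psi_0(\varphi)\ne\emptyset$ already forces $\varphi\in\textnormal{dom}\,\Psi_0=\mathbb{K}$, and (\ref{A3}) becomes the bilateral obstacle problem with $L^2$-datum $\zeta$ and the constant obstacles $\psi_1<\psi_2$; here one invokes the classical $H^2$-regularity for the obstacle problem (e.g.\ by Moreau--Yosida penalization of the obstacles and passage to the limit in the uniform second-order estimates, which also gives $|\Delta\varphi|\le|\zeta|$ a.e.), obtaining $-\Delta\varphi\in L^2$ with $\|\Delta\varphi\|\le\|\zeta\|$, hence $\varphi\in\overline{H}^2_{\partial_n}(\Omega)$ and the $\|\Psi_0'(\varphi)\|$-term is absent.

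\emph{The Navier--Stokes part.} Equation (\ref{A1}) is the weak form of the generalized stationary Stokes system $-\textnormal{div}(2\eta(\tilde{\varphi})\epsilon(v))+\nabla p=\Theta_v-\tau^{-1}(f_0v-f_{-1}\tilde{v})-\textnormal{div}(v\otimes\nu)+\mu\nabla\tilde{\varphi}$, $\textnormal{div}\,v=0$, $v_{|\partial\Omega}=0$. Of the source terms, $f_0v$ and $f_{-1}\tilde{v}$ lie in $L^2$ (since $f_0,f_{-1}\in L^3$ and $v,\tilde{v}\in L^6$), $\mu\nabla\tilde{\varphi}\in L^3\subset L^2$, and $\Theta_v\in L^2$ (which holds in the control setting of Remark \ref{settin}, where $\Theta_v=u\in L^2$); only $\textnormal{div}(v\otimes\nu)$ fails to be immediately square integrable, because $\nu\in H^1$ merely gives $v\otimes\nu\in L^3$. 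I would therefore run a short bootstrap, successively improving the integrability of $v$ through the $L^p$-theory for the Stokes operator (with $\textnormal{div}(v\otimes\nu)\in W^{-1,p}$ for increasing $p$, and the variable-viscosity contribution $\epsilon(v)\cdot\nabla\eta(\tilde{\varphi})$ moved to the right-hand side as a term of the same order), until $v\in W^{1,p}(\Omega)$ with $p$ large enough that $v\in L^\infty(\Omega)$ and $\nabla v\in L^3(\Omega)$; then $\textnormal{div}(v\otimes\nu)=(\nu\cdot\nabla)v+v\,\textnormal{div}\,\nu\in L^2$, the whole right-hand side is in $L^2$, and the $H^2$-estimate for the Stokes system gives $v\in H^2(\Omega;\mathbb{R}^N)$. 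Collecting the three sets of estimates and absorbing the (already bounded) $H^1$-norms of $\mu$, $\varphi$, $v$ on the left yields the asserted inequality with $C=C(N,\Omega,b_1,b_2,\tau,\kappa)$.

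\emph{The main obstacle.} The genuinely delicate point is the $H^2$-regularity of $\varphi$ in the double-obstacle case: there the governing relation is not an equation but a fourth-order variational inequality, and one must show that the a priori only $\overline{H}^{-1}$-valued multiplier $\partial\Psi_0(\varphi)$ is in fact an $L^2$-function -- this is precisely what removes the $\|\Psi_0'(\varphi)\|$-term from the estimate -- which is supplied by the classical regularity theory for the obstacle problem with smooth (here constant) obstacles. The bootstraps needed to accommodate the variable coefficients $m(\tilde{\varphi})$, $\eta(\tilde{\varphi})$ and, above all, the convection term $\textnormal{div}(v\otimes\nu)$ whose coefficient $\nu$ is only assumed in $H^1$, are, by comparison, merely technical.
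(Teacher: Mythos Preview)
Your proposal is correct and follows essentially the same route as the paper: treat (\ref{A3}), (\ref{A2}), (\ref{A1}) as decoupled elliptic problems for $\varphi$, $\mu$, $v$ with $L^2$ right-hand sides, invoke obstacle-problem regularity (the paper isolates this as Lemma~\ref{kind}) in the double-obstacle case, and bootstrap the Stokes system to reach $v\in H^2$. The only cosmetic differences are that the paper handles the variable coefficient in the $\mu$-equation by a direct citation of a variable-coefficient elliptic regularity result (Maugeri et al.) rather than your mini-bootstrap, and it makes the Stokes bootstrap explicit as $W^{2,3/2}\to W^{2,2-\varepsilon}\to H^2$ via \cite[Prop.~2.3]{Temam1977}.
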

\begin{proof} Equation (\ref{A3}) is equivalent to
\begin{align}
\Delta\varphi+g_1\in \partial\Psi_0(\varphi)\text{ in }\overline{H}^{-1}(\Omega)\label{hh22} 
\end{align}
with $g_1:=\mu+\kappa\tilde{\varphi}+\Theta_\varphi$. 
By Sobolev`s embedding theorem 
$g_1$ is in $L^2(\Omega)$.
In case of the double-well type potential, Assumption \ref{assPsi}.2 (a) then implies $g_2:=-g_1+\Psi_0'(\varphi)\in L^2(\Omega)$ and
$\Delta\varphi=g_2$. 
Applying \cite[Theorem 2.3.6]{Maugeri2000} and \cite[Remark 2.3.7]{Maugeri2000}, we deduce that $\varphi\in \overline{H}^2(\Omega)$ is the unique solution of the Neumann problem
\begin{align*}
\Delta \varphi=g_2 \text{ in }\Omega,\ \partial_n \varphi_{|\partial\Omega}=0 \text{ on }\partial\Omega.
\end{align*}
Furthermore, \cite[Theorem 2.3.1]{Maugeri2000} yields the existence of a constant $C:=C(N,\Omega)$ such that
\begin{align}
\left\|\varphi\right\|_{H^2}
\leq C (\left\|\varphi\right\|+\left\|g_2\right\|)
\leq C (\left\|\varphi\right\|+\left\|\mu\right\|+\kappa\left\|\tilde{\varphi}\right\|+\left\|\Theta_\varphi\right\|
+\left\|\Psi_0'(\varphi)\right\|).
\end{align}
In case of the double-obstacle potential, (\ref{hh22}) is equivalent to the variational inequality problem: %
\begin{align}
\text{Find }\varphi\in\mathbb{K}:\ \left\langle -\Delta\varphi-g_1,\phi-\varphi\right\rangle\geq 0,\ \forall \phi\in \mathbb{K}.\label{VIDO}
\end{align}
Then the assertion follows from the subsequent lemma.
\begin{lemma}\label{kind}
If $\varphi\in\mathbb{K}$ solves the variational inequality problem (\ref{VIDO}) with $g_1\in L^2(\Omega)$, then $\varphi\in\overline{H}^2_{\partial_n}(\Omega)$
and there exists a constant $C=C(N,\Omega)>0$ such that
$\left\|\varphi\right\|_{H^2}\leq C \left\|g_1\right\|.$
\end{lemma}

For the sake of completeness we provide a proof in the appendix.
It closely follows the lines of argumentation of \cite[Chapter IV]{Kinderlehrer2000}.

Regarding $\mu$, we argue similarly.
Indeed, first note that by Sobolev`s embedding theorem and H\"older's inequality
$g_3:=\frac{\varphi -\tilde{\varphi} }{\tau}+v\nabla\tilde{\varphi}-\Theta_\mu-\mu$ is an element of $L^2(\Omega)$.
Furthermore, the coefficient function $m(\tilde\varphi)$ is contained in $H^2(\Omega)$ and $W^{1,6}(\Omega)$, respectively (cf. \cite[II, Lemma A.3]{Kinderlehrer2000}).
Equation (\ref{A2}) is equivalent to
\begin{align}
m(\tilde{\varphi})\Delta\mu
+\nabla (m(\tilde{\varphi}))\nabla \mu
-\mu
=g_3
\textnormal{ in }\overline{H}^{-1}(\Omega).
\end{align}
Again by \cite[Theorem 2.3.5]{Maugeri2000} and \cite[Theorem 2.3.1]{Maugeri2000} $\mu\in\overline{H}^2_{\partial_n}(\Omega)$ and it holds that
\begin{align}
\left\|\mu\right\|_{H^{2}}
\leq C (\left\|\mu\right\|+\left\|g_3\right\|)
\leq C (\left\|\mu\right\|+\left\|\varphi\right\|+\left\|\tilde{\varphi}\right\|+\left\|v\right\|_{H^1}\left\|\tilde\varphi\right\|_{H^2}+\left\|\Theta_\mu\right\|),
\end{align}
where $C>0$ depends only on $N,\Omega,b_1,b_2,\tau$.

Finally, we show the desired regularity of $v$. 
Since $\mathop{\rm div}(\epsilon( v  ))=\frac 12(\Delta v  +\nabla (\mathop{\rm div} v  ))$, it holds that
\[
\mathop{\rm div}(2 \eta (\tilde{\varphi} ) \epsilon( v  ) ) = 2 \eta' (\tilde{\varphi} ) \epsilon( v  ) \nabla \tilde\varphi + \eta (\tilde{\varphi} )\Delta v,  
\]
and therefore by equation (\ref{A1}) that
\begin{align}
\Delta v  
=\eta (\tilde{\varphi} )^{-1}\Big[
\mathop{\rm div}( v  \otimes \nu )- 2 \eta' (\tilde{\varphi} ) \epsilon( v  ) \nabla \tilde\varphi 
+ \frac1 \tau ( f_0 v  - f_{-1} \tilde{v} )- \mu  \nabla \tilde\varphi - \Theta_v  \label{P:Regul-1}
\Big]
\end{align}
Moreover,
$\mathop{\rm div}( v  \otimes \nu ) = (D v  )\nu + v  \mathop{\rm div}\nu .$
By the assumptions all summands in the second line of~(\ref{P:Regul-1}) belong to $L^2(\Omega ;\mathbb{R}^N )$ and
$\nu \in H^1(\Omega ;\mathbb{R}^N )$.
Hence, we have
\begin{align}
\Delta v  
=\eta (\tilde{\varphi} )^{-1}\left[ (D v  )\nu + v  \mathop{\rm div}\nu - 2 \eta' (\tilde{\varphi} ) \epsilon( v  ) \nabla \tilde\varphi + f \right],\label{P:Regul-2}
\end{align}
with $ ||f|| \leq C(z)$ for a constant $C(z)>0$
depending only on
$$z=(N,\Omega ,\eta , \tau , ||\varphi ||_{H^2} , ||\tilde\varphi ||_{H^2}, || \mu  ||_{H^2}, || \Theta_v  ||).$$
In order to show $ {v} \in H^2(\Omega;\mathbb{R}^N)$, we apply a bootstrap argument and well-known regularity results
for the stationary Stokes' equation, cf.~\cite{Temam1977}.

1. Since $ v  \in H^1_{0,\sigma}(\Omega ;\mathbb{R}^N )$, we have that 
$(D v  )\nu $, $\epsilon( v  )\nabla \tilde\varphi $ and $ v  \mathop{\rm div}\nu $ belong to $ L^{3/2}(\Omega ;\mathbb{R}^N )$.
Therefore, \cite[Prop.~2.3, p.~35]{Temam1977} and~(\ref{P:Regul-2}) show that
$ v  \in W^{2,3/2}(\Omega ;\mathbb{R}^N )$
and that
$ || v  ||_{W^{2,3/2}} \leq C(z)$ for a constant $C$ depending only on $z$.

 %
2. Next, $ v  \in W^{2,3/2}(\Omega ;\mathbb{R}^N )$ and the continuous embedding of $W^{1,3/2}(\Omega)$ into $L^3(\Omega)$ (which we denote by $W^{1,3/2}(\Omega)\hookrightarrow L^3(\Omega)$) imply that
$(D v  )\nu $ and $\epsilon( v  )\nabla \tilde\varphi $ belong to $L^2(\Omega ;\mathbb{R}^N )$.
Moreover, $W^{2,3/2}(\Omega)\hookrightarrow L^p(\Omega)$ for every $p<\infty $.
Hence
$ v  \mathop{\rm div}\nu \in L^{2-\varepsilon }(\Omega ;\mathbb{R}^N )$ for every $\varepsilon >0$.
Applying \cite[Prop.~2.3, p.~35]{Temam1977} again yields $ v  \in W^{2,2-\varepsilon }(\Omega ;\mathbb{R}^N )$ for all $\varepsilon >0$
and $ || v  ||_{W^{2,2-\varepsilon }} \leq C(\varepsilon ,z)$.

3. Finally, having $ v  \in W^{2,2-\varepsilon }(\Omega ;\mathbb{R}^N )$ and since $W^{2,2-\varepsilon }(\Omega)\hookrightarrow L^\infty(\Omega) $ for $\varepsilon $ sufficiently small,
it follows that also $ v  \mathop{\rm div}\nu $ belongs to $L^2(\Omega ;\mathbb{R}^N )$.
Thus, we arrive at $ v  \in H^2(\Omega ;\mathbb{R}^N )$ and $ || v  ||_{H^2} \leq C(z)$.

This completes the proof.
\end{proof}

Our aim is to prove the existence of a solution to the semi-discrete CHNS system with the help of Theorem \ref{Exist!}.
This result, however, is not directly applicable with the setting of Remark \ref{settin}, as $f_0,f_{-1}$ and $\nu$ need not satisfy equation (\ref{prereq1}).
This is due to the nonsmoothness of the density function
and the fact that $\varphi$ may attain values in $\mathbb{R}$ (rather than $[\psi_1,\psi_2]$) for a double-well-type potential.
We overcome this difficulty by applying Theorem \ref{Exist!} with the setting
\begin{align}
\begin{aligned}
&\tilde{v}
:=v_i,\ 
\tilde{\varphi}
:=\varphi_i,\ 
f_0
:=\rho(\varphi_i),\ 
f_{-1}
:=\rho(\varphi_{i-1}),\\
&\nu
:=\overline{\nu}(v_i,\varphi_i,\varphi_{i-1},\mu_i),\
\Theta_v
:=u_{i+1},\ 
\Theta_\varphi
:=\Theta_\mu
:=0,
\end{aligned}\label{V}
\end{align}
where $\overline{\nu}:H^{1}_{0,\sigma}(\Omega;\mathbb{R}^N)\times \overline{H}^2(\Omega)^3\rightarrow H^1(\Omega;\mathbb{R}^N)$ is given by
\begin{align}
\overline{\nu}(v,\varphi,\tilde{\varphi},\mu):=\left\{\begin{array}[c]{ll}
\rho(\tilde{\varphi})v-\frac{\rho_2-\rho_1}{2}m(\tilde{\varphi})\nabla\mu & \text{if } \rho(\varphi),\rho(\tilde{\varphi})> 0\text{ a.e. in }\Omega,\\
G(\frac{\rho(\varphi) -\rho(\tilde{\varphi})}{\tau})& \text{else.}
\end{array} \right.\label{vudef}
\end{align}
Here $G:L^2(\Omega)\rightarrow H^1(\Omega;\mathbb{R}^N),\delta\longmapsto \zeta$, is a solution operator to
\begin{align}
-\textnormal{div}\zeta=\delta\text{ a.e. on }\Omega.
\end{align} 
A specific realization of $G$ is obtained by first solving $-\Delta\xi=\delta$ in $L^2(\Omega)$ with $\xi=0$ on $\partial\Omega$, yielding $\xi\in H^2(\Omega)\cap H^1_0(\Omega)$,
and then setting $\zeta:=\nabla\xi\in H^1(\Omega,\mathbb{R}^N)$.
We next prove that there always exists a solution to the system (\ref{firsttim1}),(\ref{firsttim2}),(\ref{NSwithop})
where the semi-discrete Navier-Stokes equation (\ref{firsttim3}) is replaced by
\begin{align}
\left\langle\frac{\rho(\varphi_{i}) v_{i+1}-\rho(\varphi_{i-1}) v_i}{\tau},\psi\right\rangle_{H^{-1}_{0,\sigma},H^1_{0,\sigma}}
+(2\eta(\varphi_{i})\epsilon(v_{i+1}),\epsilon (\psi))\nonumber\\
+\left\langle\textnormal{div}(v_{i+1}\otimes \overline{\nu}(v_i,\varphi_i,\varphi_{i-1},\mu_i)),\psi\right\rangle_{H^{-1}_{0,\sigma},H^1_{0,\sigma}} %
-\left\langle\mu_{i+1}\nabla\varphi_{i},\psi\right\rangle_{H^{-1}_{0,\sigma},H^1_{0,\sigma}}\nonumber\\
=\left\langle u_{i+1},\psi\right\rangle_{H^{-1}_{0,\sigma},H^1_{0,\sigma}},\ \forall \psi\in H^{1}_{0,\sigma}(\Omega;\mathbb{R}^N).\label{NSwithop} %
\end{align}
We point out that (\ref{NSwithop}) coincides with (\ref{firsttim3}) if
\begin{align}
\min\{\rho(\varphi_i),\rho(\varphi_{i-1})\}> 0\text{ a.e. on }\Omega.\label{intcond} 
\end{align}
For the double-obstacle potential this always holds true, since $\varphi_i$ is contained in the interval $[\psi_1,\psi_2]$. 
Hence in this case $\rho(\varphi_i)\geq\rho(\psi_1)=\rho_1>0$.

In Theorem \ref{Linftycon} we show that for the double-well type potentials under consideration
$\varphi$ remains in a close neighborhood of $[\psi_1,\psi_2]$ and therefore condition (\ref{intcond}) is satisfied as well.
\begin{proposition}[Existence of solution to a modified state system]\label{feaspoint}
Let $\overline{\nu}:H^{1}_{0,\sigma}(\Omega;\mathbb{R}^N)\times \overline{H}^1(\Omega)^3\rightarrow H^1(\Omega;\mathbb{R}^N)$ be defined by (\ref{vudef}). %
Then for every $u\in L^2(\Omega;\mathbb{R}^N)^{M-1}$
there exists a point $(\varphi,\mu,v)\in \overline{H}^2_{\partial_n}(\Omega)^{M} \times \overline{H}^2_{\partial_n}(\Omega)^{M}\times H^{1}_{0,\sigma}(\Omega;\mathbb{R}^N)^{M-1}$
which solves the semi-discrete system (\ref{firsttim1}),(\ref{firsttim2}),(\ref{NSwithop}).
Moreover, every solution $(\varphi,\mu,v)$ satisfies
$(\varphi,\mu,v)\in \overline{H}^2_{\partial_n}(\Omega)^{M} \times \overline{H}^2_{\partial_n}(\Omega)^{M}\times H^{1}_{0,\sigma}(\Omega;\mathbb{R}^N)^{M-1}.$
\end{proposition}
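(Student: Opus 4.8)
The plan is to prove both assertions simultaneously by induction over the time index $i$, exploiting the decoupled structure pointed out after Definition~\ref{defsemidis}: the pair $(\varphi_0,\mu_0)$ solves only the Cahn--Hilliard part, and for $i\geq 1$ the full system at time step $i$ is exactly the one-step system \eqref{A2}--\eqref{A1} with the data specified in \eqref{V}, provided we replace the Navier--Stokes equation \eqref{firsttim3} by its modified variant \eqref{NSwithop}. The base case $i=0$ is handled by applying Theorem~\ref{Exist!} with $\tilde v=v_a$, $\tilde\varphi=\varphi_a$, $f_0=f_{-1}=\rho(\varphi_{-1})=\rho(\varphi_a)$, $\nu=0$ (so that \eqref{prereq1} holds trivially), and $\Theta_v=\Theta_\mu=\Theta_\varphi=0$; this yields $(\varphi_0,\mu_0,v_0^{\mathrm{aux}})$ in $\overline H^1\times\overline H^1\times H^1_{0,\sigma}$, of which we only retain $(\varphi_0,\mu_0)$, and Lemma~\ref{regsol} (whose hypotheses are met since $\varphi_a\in H^2$, $\Theta_\mu,\Theta_\varphi=0\in L^2$, $f_0,f_{-1}=\rho(\varphi_a)\in L^\infty\subset L^3$) upgrades this to $\varphi_0,\mu_0\in\overline H^2_{\partial_n}(\Omega)$, and moreover $\varphi_0\in\mathbb K$ in the double-obstacle case.

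For the induction step, assume $(\varphi_0,\ldots,\varphi_i)$, $(\mu_0,\ldots,\mu_i)$, $(v_1,\ldots,v_i)$ have been constructed with the asserted $H^2$-regularity (and $\varphi_j\in\mathbb K$ in the obstacle case). To produce $(\varphi_{i+1},\mu_{i+1},v_{i+1})$ we invoke Theorem~\ref{Exist!} with the data \eqref{V}: $\tilde v=v_i$, $\tilde\varphi=\varphi_i$, $f_0=\rho(\varphi_i)$, $f_{-1}=\rho(\varphi_{i-1})$, $\nu=\overline\nu(v_i,\varphi_i,\varphi_{i-1},\mu_i)$, $\Theta_v=u_{i+1}$, $\Theta_\mu=\Theta_\varphi=0$. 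The crucial point is to verify the compatibility condition \eqref{prereq1}, i.e. $(f_0-f_{-1})/\tau+\operatorname{div}\nu=0$ a.e.\ on $\Omega$. This is precisely why $\overline\nu$ is defined by the case distinction \eqref{vudef}: if $\rho(\varphi_i),\rho(\varphi_{i-1})>0$ a.e., then combining equation \eqref{firsttim1} at index $i-1$ (which the induction hypothesis provides, in the form $\tfrac{\varphi_i-\varphi_{i-1}}{\tau}=\operatorname{div}(m(\varphi_{i-1})\nabla\mu_i)-v_i\nabla\varphi_{i-1}$) with the identity $\rho(\varphi_i)-\rho(\varphi_{i-1})=\tfrac{\rho_2-\rho_1}{2}(\varphi_i-\varphi_{i-1})$ yields exactly $\operatorname{div}\nu=-(f_0-f_{-1})/\tau$; in the complementary case $\nu=G\!\big(\tfrac{\rho(\varphi_i)-\rho(\varphi_{i-1})}{\tau}\big)$ satisfies $-\operatorname{div}\nu=\tfrac{\rho(\varphi_i)-\rho(\varphi_{i-1})}{\tau}$ by construction of the solution operator $G$. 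We also need $f_0,f_{-1}\geq 0$, which is guaranteed by the $\max$ in Assumption~\ref{assum1}.3, and $\tilde\varphi=\varphi_i\in\mathbb K$ in the obstacle case, available from the induction hypothesis and the last assertion of Lemma~\ref{regsol}. Hence all hypotheses of Lemma~\ref{energyest} — and therefore of Theorem~\ref{Exist!} — are met, giving a solution $(\varphi_{i+1},\mu_{i+1},v_{i+1})\in\overline H^1\times\overline H^1\times H^1_{0,\sigma}$ of \eqref{A2}--\eqref{A1}, which by the identification of Remark~\ref{settin} (with $\nu$ replaced by $\overline\nu$) is a solution of \eqref{firsttim1},\eqref{firsttim2},\eqref{NSwithop} at step $i$.

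It then remains to bootstrap the regularity. We apply Lemma~\ref{regsol} with the same data, checking its extra hypotheses: $\Theta_\mu=\Theta_\varphi=0\in L^2(\Omega)$; $\tilde\varphi=\varphi_i\in H^2(\Omega)$ by the induction hypothesis; and $f_0=\rho(\varphi_i),f_{-1}=\rho(\varphi_{i-1})\in L^3(\Omega)$, which follows since $\varphi_i,\varphi_{i-1}\in H^2(\Omega)\hookrightarrow L^\infty(\Omega)$ (as $N\le 3$) and $\rho$ is affine, hence Lipschitz. This delivers $\varphi_{i+1},\mu_{i+1}\in\overline H^2_{\partial_n}(\Omega)$ and $v_{i+1}\in H^2(\Omega;\mathbb R^N)$, together with $\varphi_{i+1}\in\mathbb K$ in the obstacle case, closing the induction. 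The second (``moreover'') assertion — that \emph{every} solution, not merely the constructed one, lies in the higher-regularity space — is immediate from the same application of Lemma~\ref{regsol}, since that lemma is a statement about arbitrary $H^1$-solutions of the one-step system and the above verification of its hypotheses used only data that is intrinsic to the system, not to the particular solution. I expect the main obstacle to be the careful bookkeeping in the verification of \eqref{prereq1}: one must track which equation is being used at which time index and confirm that the $\overline\nu$ from step $i$ is built from quantities $(v_i,\varphi_i,\varphi_{i-1},\mu_i)$ already constructed with enough regularity for $\operatorname{div}\overline\nu$ to make sense in $L^2$ (so that \eqref{prereq1} is an a.e.\ identity, not merely distributional) — this is exactly where the $H^2$-regularity carried along in the induction hypothesis is essential and where the case distinction in \eqref{vudef} must be matched precisely to the hypotheses of Theorem~\ref{Exist!}.
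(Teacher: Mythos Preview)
Your induction step and the verification of \eqref{prereq1} match the paper's argument and are correct. There is, however, a genuine gap in your base case. You propose to obtain $(\varphi_0,\mu_0)$ by applying Theorem~\ref{Exist!} with dummy Navier--Stokes data and then discarding $v_0^{\mathrm{aux}}$. But in the one-step system \eqref{A2}--\eqref{A1}, the velocity appearing in the convection term of \eqref{A2} is the \emph{unknown} $v$, not the datum $\tilde v$. Hence the pair $(\varphi_0,\mu_0)$ you extract satisfies
\[
\Big\langle \frac{\varphi_0-\varphi_a}{\tau},\phi\Big\rangle
+\langle v_0^{\mathrm{aux}}\,\nabla\varphi_a,\phi\rangle
-\langle\mathop{\rm div}(m(\varphi_a)\nabla\mu_0),\phi\rangle=0,
\]
with the auxiliary unknown $v_0^{\mathrm{aux}}$ in the transport term, whereas \eqref{firsttim1} at $i=-1$ requires the \emph{given} $v_0=v_a$ there. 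These do not coincide. The paper therefore does not use Theorem~\ref{Exist!} for $i=-1$; it treats this step separately by ``standard arguments'': since $v_0=v_a$ is known, \eqref{firsttim1}--\eqref{firsttim2} at $i=-1$ is a decoupled Cahn--Hilliard system with prescribed convection velocity, solvable directly by monotone operator theory (compare the operator $L_a^{(k)}$ in the proof of Theorem~\ref{conmin}). Once $(\varphi_0,\mu_0)\in\overline H^1\times\overline H^1$ is obtained this way, Lemma~\ref{regsol} upgrades its regularity, and your induction from $i=0$ onward proceeds exactly as you wrote.

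A minor imprecision: $\rho$ is not affine on all of $\mathbb R$ but is $\max\{\text{affine},0\}$ by Assumption~\ref{assum1}.3; it is still Lipschitz, so your conclusion $f_0,f_{-1}\in L^3(\Omega)$ remains valid.
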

\begin{proof} 
Standard arguments 
guarantee the existence of $(\varphi_0,\mu_0)\in \overline{H}^1(\Omega)\times \overline{H}^1(\Omega)$
such that (\ref{firsttim1})-(\ref{firsttim2}) is satisfied for $i=-1$.
Lemma \ref{regsol} yields $(\varphi_0,\mu_0)\in \overline{H}^2_{\partial_n}(\Omega)\times \overline{H}^2_{\partial_n}(\Omega)$.

If condition (\ref{intcond}) holds true, then Assumption \ref{assum1}.3 and (\ref{firsttim1}) imply
\begin{align*}
\textnormal{div}\overline{\nu}(v_i,\varphi_i,\varphi_{i-1},\mu_i)
&=\frac{\rho_2-\rho_1}{2}(\nabla\varphi_{i-1}v_i
-\textnormal{div}(m(\varphi_{i-1})\nabla\mu_i))\\
&=-\frac{\rho_2-\rho_1}{2}(\frac{\varphi_i-\varphi_{i-1}}{\tau})
=-\frac{\rho(\varphi_i)-\rho(\varphi_{i-1})}{\tau}.
\end{align*}
Consequently, if $(\varphi_i,\mu_i,v_i)$ satisfies (\ref{firsttim1}), then assumption (\ref{prereq1}) is always satisfied by the definition of $\overline{\nu}$ in the sense that
\begin{align}
\frac{ \rho(\varphi_i)-\rho(\varphi_{i-1})}{\tau}+\textnormal{div}\overline{\nu}(v_i,\varphi_i,\varphi_{i-1},\mu_i)=0\text{ a.e. on }\Omega.
\end{align}
Therefore, we can apply Theorem \ref{Exist!} with the setting (\ref{V}) for $i=0$
to guarantee the existence of
$(\varphi_1,\mu_1,v_1)\in \overline{H}^1(\Omega) \times \overline{H}^1(\Omega)\times H^{1}_{0,\sigma}(\Omega;\mathbb{R}^N)$
such that the system (\ref{firsttim1}),(\ref{firsttim2}),(\ref{NSwithop}) is satisfied.

Now Lemma \ref{regsol} yields $(\varphi_1,\mu_1,v_1)\in \overline{H}^2_{\partial_n}(\Omega) \times \overline{H}^2_{\partial_n}(\Omega)\times H^{1}_{0,\sigma}(\Omega;\mathbb{R}^N)$.
In the case of the double-obstacle potential it additionally follows that $\varphi_1\in\mathbb{K}$.

Repeated applications of Theorem \ref{Exist!} and Lemma \ref{regsol} for each time step $i=1,..,M-2$ prove the assertion.\end{proof} 

As discussed above, this theorem directly guarantees the existence of a solution to the semi-discrete CHNS system for the double-obstacle potential.
Next we address the boundedness of the solutions which is needed later on to ensure the existence of optimal points for ($P_\Psi$).
For this purpose, we apply the energy estimate of Lemma \ref{energyest} at each time step.
\begin{lemma}[Boundedness of the state]\label{energy2}
There exists a positive constant $C = C(N,\Omega,b_1,b_2,\tau,\kappa,v_a,\varphi_a,u)>0$
such that for every solution $(\varphi,\mu,v)\in \overline{H}^2_{\partial_n}(\Omega)^{M} \times \overline{H}^2_{\partial_n}(\Omega)^M\times H^{1}_{0,\sigma}(\Omega;\mathbb{R}^N)^{M-1}$
of Theorem \ref{feaspoint}
it holds that
\begin{align}
\left\|v\right\|^2_{(H^2)^{M}}
+\left\|\mu\right\|^2_{(H^2)^{M}}
+\left\|\varphi\right\|^2_{(H^2)^{M+1}}
\leq  
C.
\end{align}
Furthermore, the operator $L^2(\Omega,\mathbb{R}^N)^{M-1}\ni u\longmapsto C(N,\Omega,b_1,b_2,\tau,\kappa,v_a,\varphi_a,u)\in\mathbb{R}$ 
is bounded.
\end{lemma}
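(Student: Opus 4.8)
The plan is to iterate the single-step energy estimate of Lemma~\ref{energyest} over all time instances and then to lift the resulting $H^1$-bounds to the asserted $H^2$-bounds by means of Lemma~\ref{regsol}. Since the step $i=-1$ carries no Navier--Stokes equation, I would treat it separately first: testing (\ref{firsttim1}) for $i=-1$ by $\mu_0$ and (\ref{firsttim2}) for $i=-1$ by $\tau^{-1}(\varphi_0-\varphi_a)$, adding the two identities so that the $\mu_0$-terms cancel, and arguing exactly as in the proof of Lemma~\ref{energyest} (convexity of $\Psi_0$, and $2a(a-b)=a^2-b^2+(a-b)^2$), one arrives at an estimate of the form
\begin{align*}
b_1\|\nabla\mu_0\|^2+\tfrac{1}{2\tau}\|\nabla\varphi_0\|^2+\tfrac1\tau\Psi(\varphi_0)\ \leq\ \tfrac{1}{2\tau}\|\nabla\varphi_a\|^2+\tfrac1\tau\Psi(\varphi_a)+\|v_a\|_{L^\infty}\|\nabla\varphi_a\|\,\|\mu_0\|.
\end{align*}
As $v_a\in H^{2}_{0,\sigma}(\Omega;\mathbb{R}^N)\hookrightarrow L^\infty(\Omega)$, $\varphi_a\in\mathbb{K}$ and $\Psi(\varphi_a)$ is finite, Poincar\'e's inequality applied to $\mu_0\in\overline{H}^1(\Omega)$ and Young's inequality then bound $\|\varphi_0\|_{H^1}$, $\|\mu_0\|_{H^1}$ and $\Psi(\varphi_0)$ by a constant depending only on the data, in particular independently of $u$.

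For $i=0,\dots,M-2$ I would apply Lemma~\ref{energyest} with the choices (\ref{V}); here the compatibility relation (\ref{prereq1}) is built into the definition (\ref{vudef}) of $\overline\nu$, exactly as verified in the proof of Proposition~\ref{feaspoint}. Abbreviating $A_{i+1}:=\int_\Omega\tfrac12\rho(\varphi_i)|v_{i+1}|^2+\tfrac12\|\nabla\varphi_{i+1}\|^2+\Psi(\varphi_{i+1})$ and $D_{i+1}:=\tau\int_\Omega2\eta(\varphi_i)|\epsilon(v_{i+1})|^2+\tau\int_\Omega m(\varphi_i)|\nabla\mu_{i+1}|^2$, and using that in this setting $\Theta_\mu=\Theta_\varphi=0$ and $\Theta_v=u_{i+1}$ so that the remainder in (\ref{EE}) equals $g=\langle u_{i+1},v_{i+1}\rangle$, one discards the remaining nonnegative terms on the left of (\ref{EE}) to get $A_{i+1}+D_{i+1}\leq A_i+\langle u_{i+1},v_{i+1}\rangle$. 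Summing this telescoping inequality from $i=0$ to $i=k-1$ yields, for every $1\leq k\leq M-1$,
\begin{align*}
A_k+\sum_{j=1}^{k}D_j\ \leq\ A_0+\sum_{j=1}^{k}\langle u_j,v_j\rangle.
\end{align*}
Since $\Psi\geq B_l$ (Assumption~\ref{assPsi}) and $\rho\geq0$ (Assumption~\ref{assum1}.3) we have $A_k\geq B_l$; Korn's and Poincar\'e's inequalities together with $\eta,m\geq b_1$ give $D_j\geq c\tau(\|v_j\|_{H^1}^2+\|\mu_j\|_{H^1}^2)$ with $c=c(N,\Omega,b_1)>0$; and $\langle u_j,v_j\rangle\leq\tfrac{c\tau}{2}\|v_j\|_{H^1}^2+\tfrac{1}{2c\tau}\|u_j\|^2$. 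Absorbing the velocity contribution on the right and invoking the bound on $A_0$ from the first step, I would obtain $\sum_{j=1}^{M-1}(\|v_j\|_{H^1}^2+\|\mu_j\|_{H^1}^2)\leq C_1+C_2\|u\|^2$, and, from $A_k\geq B_l+\tfrac12\|\nabla\varphi_k\|^2$ together with Poincar\'e's inequality for $\varphi_k\in\overline{H}^1(\Omega)$, the same type of bound for $\|\varphi_k\|_{H^1}^2$. Combined with the first step this gives uniform $H^1$-bounds on all $\varphi_i$, $\mu_i$, $v_i$, with the constant depending on the data and on $\|u\|_{L^2(\Omega;\mathbb{R}^N)^{M-1}}^2$ only.

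It remains to pass to $H^2$ by an induction over the time steps based on Lemma~\ref{regsol}, whose additional hypotheses hold at every step: $\varphi_i\in\overline{H}^2_{\partial_n}(\Omega)\hookrightarrow L^\infty(\Omega)$ makes $\rho(\varphi_i)\in L^3(\Omega)$, and $\overline\nu(v_i,\varphi_i,\varphi_{i-1},\mu_i)\in H^1(\Omega;\mathbb{R}^N)$ by (\ref{vudef}). Starting from the $H^2$-bound on $(\varphi_0,\mu_0)$ (obtained from the elliptic-regularity part of Lemma~\ref{regsol} adapted to the decoupled step $i=-1$), the estimate of Lemma~\ref{regsol} with the setting (\ref{V}) controls $\|\varphi_{i+1}\|_{H^2}+\|\mu_{i+1}\|_{H^2}+\|v_{i+1}\|_{H^2}$ in terms of $\|\varphi_{i+1}\|$, $\|\mu_{i+1}\|$, $\|\varphi_i\|$, $\|v_{i+1}\|_{H^1}\|\varphi_i\|_{H^2}$ and---only for the double-well type potentials---$\|\Psi_0'(\varphi_{i+1})\|$, the latter being controlled by $\|\varphi_{i+1}\|$ for the potentials under consideration, whereas for the double-obstacle potential $\varphi_i\in\mathbb{K}$ and this term is absent. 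As $M$ is finite, a finite recursion yields the claimed estimate, with a constant that depends polynomially on $\|u\|_{L^2(\Omega;\mathbb{R}^N)^{M-1}}$; this in particular establishes the asserted boundedness of the map $u\mapsto C$. The main obstacle is the correct telescoping of the three-time-level energy identity of Lemma~\ref{energyest} and the absorption of the mixed term $\langle u_{i+1},v_{i+1}\rangle$ into the viscous dissipation via Korn's inequality; once the $H^1$-bounds are established, the $H^2$-bootstrap through Lemma~\ref{regsol} is routine.
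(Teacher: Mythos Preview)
Your proposal is correct and follows essentially the same strategy as the paper: iterate the single-step energy estimate of Lemma~\ref{energyest} with the choices~(\ref{V}) to obtain uniform $H^1$-bounds, then bootstrap to $H^2$ via Lemma~\ref{regsol}. The only differences are cosmetic: the paper drops all but the final dissipation term in the telescoped inequality and reaches a quadratic inequality $\|v\|_{(H^1)^M}^2+\ldots\le C_1+C_2\|u\|\,\|v\|_{(H^1)^M}$, whereas you keep the full dissipation sum and absorb each $\langle u_j,v_j\rangle$ by Young's inequality; and you treat the decoupled step $i=-1$ explicitly, while the paper simply records $E(v_0,\varphi_0,\varphi_{-1})$ as an initial-data constant without spelling out why $\|\nabla\varphi_0\|$ and $\Psi(\varphi_0)$ are controlled. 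Your version is, if anything, slightly more transparent on this point. One caution: your claim that $\|\Psi_0'(\varphi_{i+1})\|$ is ``controlled by $\|\varphi_{i+1}\|$ for the potentials under consideration'' is not a consequence of Assumption~\ref{assPsi}.2 alone; it holds for the specific Yosida-type regularizations of Definition~\ref{defpotapp} (with a constant depending on the regularization parameter), and the paper glosses over this in the same way.
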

\begin{proof}
We define the functional $E:\overline{H}^1_{0,\sigma}(\Omega)\times\overline{H}^1(\Omega)\times\overline{H}^1(\Omega)\rightarrow\mathbb{R}$ as follows:
\begin{align}
 E(v,\varphi,\phi):=\int_\Omega\frac{\rho(\phi) \left| v\right|^2}{2}dx
+\int_\Omega\frac{\left| \nabla\varphi\right|^2}{2}dx
+\Psi(\varphi).
\end{align}
Let $j\in\left\{1,..,M-2\right\}$ be arbitrarily fixed.
Then by repeatedly applying Lemma \ref{energyest} with the setting (\ref{V}) for $i=j,j-1,..,0$, 
we conclude that
\begin{align*}
&E(v_{j+1},\varphi_{j+1},\varphi_{j})
+\tau\int_\Omega2\eta(\varphi_{j})\left|\epsilon(v_{j+1})\right|^2dx
+\tau\int_\Omega m(\varphi_{j})\left|\nabla\mu_{j+1}\right|^2dx\\
&\leq E(v_{j},\varphi_{j},\varphi_{j-1})
+\left(u_{j+1},v_{j+1}\right)\\
&\leq E(v_{j-1},\varphi_{j-1},\varphi_{j-2})
+\left(u_{j},v_{j}\right)
+\left(u_{j+1},v_{j+1}\right)\\
&\ :\\
&\leq E(v_{0},\varphi_{0},\varphi_{-1})
+\sum_{i=1}^{j+1}\left(u_{i},v_{i}\right).
\end{align*}
By Assumptions \ref{assum1} and \ref{assPsi} this yields
\begin{align}
&\int_\Omega\frac{\left| \nabla\varphi_{j+1}\right|^2}{2}dx
+\Psi(\varphi_{j+1}) %
+2 \tau b_1\int_\Omega\left|\epsilon(v_{j+1})\right|^2dx
+\tau b_1\int_\Omega \left|\nabla\mu_{j+1}\right|^2dx\nonumber\\
&\leq E(v_{0},\varphi_{0},\varphi_{-1})
+\sum_{i=1}^{M-1}\left\|u_{i}\right\|\left\| v_{i}\right\|
\leq C_1 
+C_2\left\|u\right\|_{(L^2)^{M-1}}\left\| v\right\|_{(H^1)^M},\label{potest}
\end{align}
where $C_1$ depends only on the initial data $(N,\Omega,B_l,B_u,v_{a},\varphi_{a})$.
Due to Korn's inequality and Poincar\'e's inequality, this ensures
\begin{align*}
\left\|v_{j+1}\right\|^2_{H^1}
+\left\|\mu_{j+1}\right\|^2_{H^1}
+\left\|\varphi_{j+1}\right\|^2_{H^1}
\leq C_1 
+C_2\left\|u\right\|_{(L^2)^{M-1}}\left\| v\right\|_{(H^1)^M}.
\end{align*}
Since $j\in\left\{1,..,M-1\right\}$ is arbitrarily chosen, we infer
\begin{align}
&\left\|v\right\|^2_{(H^1)^{M}}
+\left\|\mu\right\|^2_{(H^1)^{M}}
+\left\|\varphi\right\|^2_{(H^1)^{M+1}}
\leq  
C_1 
+C_2\left(\left\|u\right\|_{(L^2)^{M-1}}\left\| v\right\|_{(H^1)^M}\right)\label{enerineq}.
\end{align} 
Hence $(\varphi,\mu,v)$ is bounded in $\overline{H}^1(\Omega)^{M} \times \overline{H}^1(\Omega)^M\times H^{1}_{0,\sigma}(\Omega;\mathbb{R}^N)^{M-1}$.
Then the boundedness in the respective $H^2$-spaces follows directly by applying Lemma \ref{regsol} for each time step.\end{proof}

Next we address the case of the double-well potential
and show that for appropriate double-well type potentials
the order parameter of a solution to the system (\ref{firsttim1}),(\ref{firsttim2}),(\ref{NSwithop})
is always greater than $\psi_1-\varepsilon$ for some small $\varepsilon>0$.
\begin{theorem}\label{Linftycon} %
Let $u\in L^2(\Omega;\mathbb{R}^N)^{M-1}$ be given and
$\left\{\Psi_0^{(k)}\right\}_{k\in\mathbb{N}}$ a sequence of functions which satisfies the following two conditions:
\begin{enumerate}
\item For every $k\in\mathbb{N}$ $\Psi_0^{(k)}$ fulfills Assumption \ref{assPsi}.
\item If $\left\{\hat{\varphi}^{(k)}\right\}_{k\in\mathbb{N}}$ is a sequence in $\overline{H}^1(\Omega)$
such that there exists $C>0$ with $\Psi_0^{(k)}\left(\hat{\varphi}^{(k)}\right)\leq C$ for $k\in\mathbb{N}$, then
\begin{align*} %
\left\|\max(-\hat{\varphi}^{(k)}+\psi_1,0)\right\|_{L^1}\rightarrow 0,\text{ as }k\rightarrow\infty.
\end{align*}
\end{enumerate}

Furthermore, let $\left\{(\varphi^{(k)},\mu^{(k)},v^{(k)})\right\}_{k\in\mathbb{N}}$ be a sequence of solutions to the systems
(\ref{firsttim1}),(\ref{firsttim2}),(\ref{NSwithop}) with $\Psi_0=\Psi_0^{(k)}$. 
Then
$$\left\|\max(-\varphi^{(k)}+\psi_1,0)\right\|_{L^\infty}\rightarrow 0,\text{ as }k\rightarrow\infty.$$ %
\end{theorem}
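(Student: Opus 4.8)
The plan is to combine the uniform $H^2$-regularity of the states, furnished by the preceding results, with hypothesis~2 and a Sobolev--interpolation argument that upgrades smallness in $L^1$ to smallness in $L^\infty$.

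First I would record the a priori bounds. Since every $\Psi_0^{(k)}$ satisfies Assumption~\ref{assPsi}, Proposition~\ref{feaspoint} applies and Lemma~\ref{energy2} bounds $(\varphi^{(k)},\mu^{(k)},v^{(k)})$ in $\overline{H}^2_{\partial_n}(\Omega)^M\times\overline{H}^2_{\partial_n}(\Omega)^M\times H^1_{0,\sigma}(\Omega;\mathbb{R}^N)^{M-1}$; a glance at the proof of Lemma~\ref{energy2}, in particular at estimate~(\ref{potest}), shows this bound is uniform in $k$ as soon as the constants $B_l,B_u$ of Assumption~\ref{assPsi} are taken uniform in $k$ for the family under consideration. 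Moreover, (\ref{potest}) yields $\Psi^{(k)}(\varphi^{(k)}_i)\le C$ at each time instance, and then, using the uniform $L^2$-bound on $\varphi^{(k)}_i$,
\begin{align*}
\Psi_0^{(k)}(\varphi^{(k)}_i)=\Psi^{(k)}(\varphi^{(k)}_i)+\tfrac{\kappa}{2}\|\varphi^{(k)}_i\|^2\le C_0,
\end{align*}
with $C_0$ independent of $k$ and $i$.

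Next I would fix an index $i\in\{0,\dots,M-1\}$ and apply hypothesis~2 with $\hat\varphi^{(k)}:=\varphi^{(k)}_i$: since $\Psi_0^{(k)}(\hat\varphi^{(k)})\le C_0$, this gives $\|\max(-\varphi^{(k)}_i+\psi_1,0)\|_{L^1}\to 0$ as $k\to\infty$. Writing $w^{(k)}_i:=\max(-\varphi^{(k)}_i+\psi_1,0)\ge 0$, I would then pass to the $L^\infty$-norm as follows. Because $N\in\{2,3\}$ and $\partial\Omega$ is $C^2$, one has $H^2(\Omega)\hookrightarrow C^{0,\alpha}(\overline{\Omega})$ for a suitable $\alpha\in(0,1)$, and since $t\mapsto\max(t,0)$ is $1$-Lipschitz the Hölder seminorm of $w^{(k)}_i$ is dominated by that of $\varphi^{(k)}_i$, hence bounded uniformly in $k$. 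A nonnegative sequence that is bounded in $C^{0,\alpha}(\overline{\Omega})$ and converges to $0$ in $L^1(\Omega)$ must converge to $0$ in $L^\infty(\Omega)$: if the (attained) maximum of $w^{(k)}_i$ did not tend to $0$, Hölder continuity with a $k$-independent constant would force $w^{(k)}_i\ge\tfrac{1}{2}\|w^{(k)}_i\|_{L^\infty}$ on a ball about the maximizer of radius comparable to $\|w^{(k)}_i\|_{L^\infty}^{1/\alpha}$, and the cone condition on $\Omega$ would give $\|w^{(k)}_i\|_{L^1}\ge c\,\|w^{(k)}_i\|_{L^\infty}^{1+N/\alpha}$, contradicting the $L^1$-convergence; equivalently one may invoke the interpolation inequality $\|w\|_{L^\infty}\le C\|w\|_{C^{0,\alpha}}^{N/(N+\alpha)}\|w\|_{L^1}^{\alpha/(N+\alpha)}$. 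Taking the maximum over the finitely many $i$ then gives the claim.

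The main obstacle will be the uniformity in the first step: one has to make sure the energy estimate of Lemma~\ref{energy2} does not degenerate as $k\to\infty$, i.e. that $\Psi_0^{(k)}$ and the associated $\Psi^{(k)}$ admit $k$-independent bounds (from below on $\overline{H}^1(\Omega)$ and from above on $\mathbb{K}$). Along the way, one should also observe that the term $\|\Psi_0'(\varphi)\|$ occurring in the estimate of Lemma~\ref{regsol} can be sidestepped — the $H^2$-bound for $\varphi^{(k)}_i$ follows from the uniform $H^1$-bounds together with the monotonicity of $\partial\Psi_0^{(k)}$ (which yields $\|\Delta\varphi^{(k)}_i\|\le\|\mu^{(k)}_i+\kappa\varphi^{(k)}_{i-1}\|$ and then elliptic Neumann regularity) — so that no $k$-uniform control of the derivatives of the regularizing potentials is required. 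Everything else is a routine combination of the Sobolev embedding into Hölder spaces, the Lipschitz truncation, and the interpolation/cone-condition argument.
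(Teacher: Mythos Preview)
Your proposal is correct and follows essentially the same route as the paper: uniform energy bound $\Rightarrow$ uniform bound on $\Psi_0^{(k)}(\varphi_i^{(k)})$ $\Rightarrow$ $L^1$-smallness via hypothesis~2 $\Rightarrow$ $L^\infty$-smallness via uniform H\"older continuity (from $H^2\hookrightarrow C^{0,\alpha}$) combined with the cone condition. Your remark that the $\|\Psi_0'(\varphi)\|$-term in Lemma~\ref{regsol} can be bypassed through the monotonicity estimate $\|\Delta\varphi_i^{(k)}\|\le\|\mu_i^{(k)}+\kappa\varphi_{i-1}^{(k)}\|$ is a useful clarification of the $k$-uniformity that the paper leaves implicit.
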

\begin{proof}
Employing Lemma \ref{energy2}, in particular inequality (\ref{potest}) from its proof, then we see that for every $i\in\left\{-1,..,M-1\right\}$ and $k\in\mathbb{N}$ it holds that
$\Psi^{(k)}(\varphi_i^{(k)})\leq C_1.$
Hence, we conclude
\begin{align*}
\Psi_0^{(k)}\left(\varphi_i^{(k)}\right)\leq C_1+\frac{\kappa}{2}\left\|\varphi_i^{(k)}\right\|^2_{L^2}\leq C_2.
\end{align*}
By assumption, this yields
\begin{align}
\left\|\max(-\varphi_i^{(k)}+\psi_1,0)\right\|_{L^1}\rightarrow 0\textnormal{ for }k\rightarrow\infty.\label{hinf}
\end{align}
Next, we use the technique of \cite[Proposition 2.4]{Hintermueller2014a} and \cite[Remark 2.5]{Hintermueller2014a} to derive that
$\left\|\max(-\varphi_i^{(k)}+\psi_1,0)\right\|_{L^\infty}\rightarrow 0$ for $k\rightarrow\infty$.
We stay brief here and refer to \cite{Hintermueller2014a} for details on the technique.
By Lemma \ref{energy2} the sequence $\left\{\varphi^{(k)}\right\}_{k\in\mathbb{N}}$ is bounded in $\overline{H}^2(\Omega)$,
and due to Sobolev's embedding theorem it is also bounded in $W^{1,6}(\Omega)$ and $C^{0,\beta}(\overline{\Omega})$, $\beta\leq\frac{1}{2}$, respectively.
Thus, there exists a constant $C_\beta$ such that for every $k \in \mathbb{N}$ we have $\left\|\varphi^{(k)}\right\| _{C^{0,\beta}}\leq C_\beta$.

For fix $k\in\mathbb{N}$ assume that $\left\|\max(-\varphi_i^{(k)}+\psi_1,0)\right\|_{L^\infty}>0$
and define the set $\mathbb{G}:=\left\{\omega\in\overline{\Omega}:\varphi_i^{(k)}(\omega)\leq\psi_1<0\right\}$.
Then let $\omega_{max}\in\mathbb{G}$ satisfy
\begin{align*}
-\varphi_i^{(k)}(\omega_{max})+\psi_1=\left\|-\varphi_i^{(k)}+\psi_1\right\|_{L^\infty(\mathbb{G})}=\left\|\max(-\varphi_i^{(k)}+\psi_1,0)\right\|_{L^\infty(\Omega)}.
\end{align*}
Due to the H\"older continuity of $\varphi_i^{(k)}$,
for every $x\in\Omega$ which satisfies
$\left|x-\omega_{max}\right|_{\mathbb{R}^N}\leq \left(\frac{-\varphi_i^{(k)}(\omega_{max})+\psi_1}{2C_\beta}\right)^\frac{1}{\beta}$ it holds that
\begin{align*}
-\varphi_i^{(k)}(x)+\psi_1
&\geq-\varphi_i^{(k)}(\omega_{max})+\psi_1-\left\|\varphi_i^{(k)}\right\|_{C^{(0,\beta)}(\Omega)}\left|\omega_{max}-x\right|^\beta_{\mathbb{R}^N}\\
&\geq\frac{-\varphi_i^{(k)}(\omega_{max})+\psi_1}{2}>0.
\end{align*} 
As 
$\Omega$ satisfies the cone condition, there exists a finite cone $K_r(\omega_{max}):=K(\omega_{max})\cap B(\omega_{max},r)$ of radius $r$ and with vertex $\omega_{max}$ such that $K_r(\omega_{max})\subset\Omega$.
Hence the cone $K_R(\omega_{max})$ with
$R:=\min\left(r,\left(\frac{-\varphi_i^{(k)}(\omega_{max})+\psi_1}{2C_\beta}\right)^\frac{1}{\beta}\right)$
is contained in $\mathbb{G}$.
Consequently, we find
\begin{align*}
\left\|\max(-\varphi_i^{(k)}+\psi_1,0)\right\|_{L^1(\Omega)} %
&\geq\int_{K_R(\omega_{max})}-\varphi_i^{(k)}+\psi_1dx\\
&\geq\int_{K_R(\omega_{max})} 
\frac{\left(-\varphi_i^{(k)}(\omega_{max})+\psi_1\right)}{2}dx\\
&\geq \frac{|K_R(0)|}{2}\left\|\max(-\varphi_i^{(k)}+\psi_1,0)\right\|_{L^\infty(\Omega)}
\end{align*} 
In combination with (\ref{hinf}) this proves the assertion.
\end{proof}

We define $\varphi^-\in\mathbb{R}$ as
\begin{align}
\varphi^-:=\inf\left\{\varphi\in\mathbb{R}:\rho(\varphi)>0\right\}<\psi_1.
\end{align}
Let $u\in L^2(\Omega;\mathbb{R}^N)^{M-1}$ be given and let $\left\{\Psi_0^{(k)}\right\}_{k\in\mathbb{N}}$ be a sequence of double-well type potentials
which approximates the double-obstacle potential in a certain sense, i.e., it satisfies condition 2 of Theorem \ref{Linftycon}.
Then Theorem \ref{Linftycon} ensures that there exists $k^*\in\mathbb{N}$ such that for every $k\geq k^*$ the solutions
$(\varphi^{(k)},\mu^{(k)},v^{(k)})$ 
to the corresponding systems (\ref{firsttim1}),(\ref{firsttim2}),(\ref{NSwithop}) with $\Psi_0=\Psi_0^{(k)}$ satisfy
\begin{align}
\varphi_i^{(k)}>\varphi^-,\ \forall i=-1,..,M-1.
\end{align}
Hence $\rho(\varphi_i^{(k)})>0$ for every $i=-1,..,M-1$ and $k\geq k^*$.
Thus, (\ref{NSwithop}) coincides with (\ref{firsttim3}),
which leads to the subsequent theorem.
\begin{theorem}[Existence of feasible points]
Let $u\in L^2(\Omega;\mathbb{R}^N)^{M-1}$.
Let $\overline{\Psi}_0$ be the double-obstacle potential defined in Assumption \ref{assPsi}.1
and let $\left\{\Psi_0^{(k)}\right\}_{k\in \mathbb{N}}$ be a sequence which satisfies the conditions of Theorem \ref{Linftycon}.

Then there exists $k^*\in\mathbb{N}$ such that the system (\ref{firsttim1})-(\ref{firsttim3}) admits a solution $(\varphi,\mu,v)$
for every $\Psi_0\in\left\{\overline{\Psi}_0\right\}\cup\left\{\Psi_0^{(k)}\right\}_{k\geq k^*}$. For this solution $(\varphi,\mu,v)$ the result of Lemma \ref{energy2} remains true.
\end{theorem}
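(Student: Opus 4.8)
The statement is, in effect, a corollary of Proposition~\ref{feaspoint}, Lemma~\ref{energy2} and Theorem~\ref{Linftycon}, so the plan is to assemble it from these three results, the pivotal observation being that in each admissible case the auxiliary flux condition (\ref{intcond}) is met, so that the modified Navier--Stokes relation (\ref{NSwithop}) collapses onto the original equation (\ref{firsttim3}).

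\emph{Double-obstacle case.} First I would apply Proposition~\ref{feaspoint} with $\Psi_0=\overline{\Psi}_0$ to produce a solution $(\varphi,\mu,v)$ of (\ref{firsttim1}),(\ref{firsttim2}),(\ref{NSwithop}) in the asserted $H^2$-regularity classes, with each $\varphi_i\in\mathbb{K}$. Since $\varphi_i\in\mathbb{K}$ entails $\varphi_i\geq\psi_1$ a.e., Assumption~\ref{assum1}.3 together with Remark~\ref{shifty} gives $\rho(\varphi_i)\geq\rho(\psi_1)=\rho_1>0$ a.e.\ on $\Omega$ for all $i$. Hence (\ref{intcond}) holds and, as recorded below the definition of $\overline{\nu}$, (\ref{NSwithop}) coincides with (\ref{firsttim3}); thus $(\varphi,\mu,v)$ solves (\ref{firsttim1})--(\ref{firsttim3}).

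\emph{Double-well approximations.} For each $k$, Proposition~\ref{feaspoint} furnishes a solution $(\varphi^{(k)},\mu^{(k)},v^{(k)})$ of (\ref{firsttim1}),(\ref{firsttim2}),(\ref{NSwithop}) with $\Psi_0=\Psi_0^{(k)}$. I would then invoke Theorem~\ref{Linftycon} on this family to get $\|\max(-\varphi_i^{(k)}+\psi_1,0)\|_{L^\infty}\to 0$ as $k\to\infty$ for every $i$. Since $\varphi^-<\psi_1$ by definition, there is $k^*\in\mathbb{N}$ with $\varphi_i^{(k)}>\varphi^-$ a.e.\ on $\Omega$ for all $i$ and all $k\geq k^*$, so that $\rho(\varphi_i^{(k)})>0$ a.e.\ and (\ref{intcond}) again holds; hence (\ref{NSwithop}) reduces to (\ref{firsttim3}) and $(\varphi^{(k)},\mu^{(k)},v^{(k)})$ solves the original semi-discrete CHNS system for all $k\geq k^*$. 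The one point calling for care is that $k^*$ must be uniform over \emph{all} solutions of the modified system, not merely along one distinguished sequence; this I would settle by a routine contradiction argument: were it false, one could select, for arbitrarily large $k$, a solution of the modified system violating $\varphi_i^{(k)}>\varphi^-$, and Theorem~\ref{Linftycon} applied to that sequence would force $\|\max(-\varphi_i^{(k)}+\psi_1,0)\|_{L^\infty}\to 0$, a contradiction.

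\emph{Boundedness.} Finally, in each of the above cases the solution $(\varphi,\mu,v)$ was produced as a solution of the modified system (\ref{firsttim1}),(\ref{firsttim2}),(\ref{NSwithop}) of Proposition~\ref{feaspoint}, so Lemma~\ref{energy2} applies to it verbatim, yielding the claimed uniform $H^2$-bound together with the boundedness of the constant $C(N,\Omega,b_1,b_2,\tau,\kappa,v_a,\varphi_a,u)$ as a function of $u$. I do not anticipate any genuine obstacle; the argument is essentially bookkeeping layered on top of the three cited results, with the subsequence-uniformity of the threshold $k^*$ being the only item that warrants a sentence of justification.
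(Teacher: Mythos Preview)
Your proposal is correct and follows essentially the same approach as the paper: the paper's argument is precisely the informal discussion preceding the theorem, which uses Proposition~\ref{feaspoint} to obtain solutions of the modified system, invokes the remark that (\ref{intcond}) is automatic in the double-obstacle case, and applies Theorem~\ref{Linftycon} together with the definition of $\varphi^-$ to find $k^*$ in the double-well case. Your added contradiction argument for the uniformity of $k^*$ over all solutions is a reasonable tightening that the paper leaves implicit.
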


In other words, the semi-discrete CHNS system (\ref{firsttim1})-(\ref{firsttim3}) has a solution if the double-well type potential under consideration
is close enough to the double-obstacle potential.
In the following sections we always assume that this is the case.
In Definition \ref{defpotapp} below, we propose a specific regularization which satisfies the conditions of Theorem \ref{Linftycon}.
\section{Existence of globally optimal points}
The previous section guarantees the existence of feasible points for the optimal control problem ($P_\Psi$).
Next we investigate the existence of a solution to ($P_\Psi$).
For this purpose, we need to impose additional assumptions on the objective functional and the constraint set $U_{ad}$.
\begin{theorem}[Existence of global solutions]\label{exsol}
Suppose that $\mathcal{J}:\overline{H}^2_{\partial_n}(\Omega)^{M} \times \overline{H}^2_{\partial_n}(\Omega)^{M}\times H^{1}_{0,\sigma}(\Omega;\mathbb{R}^N)^{M-1}\times L^2(\Omega;\mathbb{R}^N)^{M-1}\rightarrow\mathbb{R}$
is convex and weakly lower-semi-continuous and $U_{ad}$ is non-empty, closed and convex.
Assume that either $U_{ad}$ is bounded
or $\mathcal{J}$ is partially coercive, i.e. for every sequence $\left\{(\varphi^{(k)},\mu^{(k)},v^{(k)},u^{(k)})\right\}_{k\in\mathbb{N}}$
with $\lim_{k\rightarrow\infty}\left\| u^{(k)}\right\|=\infty$ it holds that
$\lim_{k\rightarrow\infty} \mathcal{J}(\varphi^{(k)},\mu^{(k)},v^{(k)},u^{(k)})=\infty.$
Then the optimization problem ($P_\Psi$) admits a global solution. 
\end{theorem}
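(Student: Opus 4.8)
The plan is to run the standard direct method of the calculus of variations. Since $U_{ad}\neq\emptyset$ and, by the existence-of-feasible-points result of the previous section (which we assume throughout for the potential $\Psi$ under consideration), $S_\Psi(u)\neq\emptyset$ for every $u\in U_{ad}$, the feasible set of ($P_\Psi$) is non-empty, so the infimum $j^{\ast}$ of $\mathcal{J}$ over the feasible set is finite. I pick a minimizing sequence $\{(\varphi^{(k)},\mu^{(k)},v^{(k)},u^{(k)})\}_{k\in\mathbb{N}}$ of feasible points. If $U_{ad}$ is bounded, then $\{u^{(k)}\}$ is bounded in $L^2(\Omega;\mathbb{R}^N)^{M-1}$; if instead $\mathcal{J}$ is partially coercive, then $\{u^{(k)}\}$ must again be bounded, for otherwise $\mathcal{J}(\varphi^{(k)},\mu^{(k)},v^{(k)},u^{(k)})\to\infty$ along a subsequence, contradicting $j^{\ast}<\infty$. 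In either case, Lemma~\ref{energy2} --- in particular its concluding statement that the constant therein depends boundedly on $u$ --- provides a uniform bound on the states $\{(\varphi^{(k)},\mu^{(k)},v^{(k)})\}$ in $\overline{H}^2_{\partial_n}(\Omega)^{M}\times\overline{H}^2_{\partial_n}(\Omega)^{M}\times H^{1}_{0,\sigma}(\Omega;\mathbb{R}^N)^{M-1}$.

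Next I extract limits. Passing to a (non-relabeled) subsequence, $\varphi^{(k)}\rightharpoonup\varphi$, $\mu^{(k)}\rightharpoonup\mu$ in the $\overline{H}^2$-spaces, $v^{(k)}\rightharpoonup v$ in $H^{1}_{0,\sigma}(\Omega;\mathbb{R}^N)^{M-1}$, and $u^{(k)}\rightharpoonup u$ in $L^2(\Omega;\mathbb{R}^N)^{M-1}$. The compact embeddings $\overline{H}^2(\Omega)\hookrightarrow\hookrightarrow H^1(\Omega)$, $\overline{H}^2(\Omega)\hookrightarrow\hookrightarrow C(\overline{\Omega})$ (both valid for $N=2,3$) and $H^{1}_{0,\sigma}(\Omega;\mathbb{R}^N)\hookrightarrow\hookrightarrow L^{4}(\Omega;\mathbb{R}^N)$ then yield, for each time index $i$, the strong convergences $\varphi^{(k)}_{i}\to\varphi_{i}$, $\mu^{(k)}_{i}\to\mu_{i}$ in $H^1(\Omega)$ and in $C(\overline{\Omega})$, and $v^{(k)}_{i}\to v_{i}$ in $L^{4}(\Omega;\mathbb{R}^N)$ (hence in $L^2$); by continuity of $m$, $\eta$, $\rho$ one also gets $m(\varphi^{(k)}_{i})\to m(\varphi_{i})$, $\eta(\varphi^{(k)}_{i})\to\eta(\varphi_{i})$, $\rho(\varphi^{(k)}_{i})\to\rho(\varphi_{i})$ uniformly on $\overline{\Omega}$.

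I then pass to the limit in (\ref{firsttim1})--(\ref{firsttim3}). Linear terms converge by weak convergence, while every nonlinear term converges by combining the strong convergences of the previous step (in $C(\overline{\Omega})$ for $m(\varphi^{(k)}_i),\eta(\varphi^{(k)}_i),\rho(\varphi^{(k)}_i)$, in $H^1$ for $\varphi^{(k)}_i,\mu^{(k)}_i$, in $L^4$ for $v^{(k)}_i$) with, at most, the weak $L^2$-convergence of $\epsilon(v^{(k)}_{i+1})$ and of $\Delta\varphi^{(k)}_{i+1}$ --- e.g. $\langle v^{(k)}_{i+1}\nabla\varphi^{(k)}_{i},\phi\rangle$, $(m(\varphi^{(k)}_{i})\nabla\mu^{(k)}_{i+1},\nabla\phi)$, $\langle\tfrac{\rho(\varphi^{(k)}_{i})v^{(k)}_{i+1}-\rho(\varphi^{(k)}_{i-1})v^{(k)}_{i}}{\tau},\psi\rangle$, $\langle\textnormal{div}(v^{(k)}_{i+1}\otimes\rho(\varphi^{(k)}_{i-1})v^{(k)}_{i}),\psi\rangle$ and the analogous flux term, $(2\eta(\varphi^{(k)}_{i})\epsilon(v^{(k)}_{i+1}),\epsilon(\psi))$, and $\langle\mu^{(k)}_{i+1}\nabla\varphi^{(k)}_{i},\psi\rangle$ all pass to the limit. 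For the double-obstacle potential, $\varphi^{(k)}_{i}\in\mathbb{K}$ and $\mathbb{K}$ is closed and convex, hence weakly closed, so $\varphi_{i}\in\mathbb{K}$; writing $\beta^{(k)}_{i+1}:=\Delta\varphi^{(k)}_{i+1}+\mu^{(k)}_{i+1}+\kappa\varphi^{(k)}_{i}\in\partial\Psi_0(\varphi^{(k)}_{i+1})$, which is bounded in $L^2(\Omega)$ by the $\overline{H}^2$-estimate, one extracts $\beta^{(k)}_{i+1}\rightharpoonup\beta_{i+1}=\Delta\varphi_{i+1}+\mu_{i+1}+\kappa\varphi_{i}$ in $L^2(\Omega)$ and concludes $\beta_{i+1}\in\partial\Psi_0(\varphi_{i+1})$ from the strong $L^2$-convergence of $\varphi^{(k)}_{i+1}$ and the demiclosedness of the maximal monotone graph of $\partial\Psi_0$; thus (\ref{firsttim2}) holds in the limit. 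For a double-well type potential the (fixed) map $\Psi_0'$ is continuous on $L^2(\Omega)$ (Assumption~\ref{assPsi}.2, e.g. for the Moreau--Yosida regularization of Section~5), so $\Psi_0'(\varphi^{(k)}_{i+1})\to\Psi_0'(\varphi_{i+1})$ and the same passage works. Hence $(\varphi,\mu,v)\in S_\Psi(u)$, and since $U_{ad}$ is closed and convex, hence weakly closed, $u\in U_{ad}$: the limit point is feasible.

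Finally, $\mathcal{J}$ is convex and lower-semicontinuous, hence weakly lower-semicontinuous on $\mathcal{X}$, so
\[
\mathcal{J}(\varphi,\mu,v,u)\le\liminf_{k\to\infty}\mathcal{J}(\varphi^{(k)},\mu^{(k)},v^{(k)},u^{(k)})=j^{\ast},
\]
and therefore $(\varphi,\mu,v,u)$ is a global solution of ($P_\Psi$). I expect the main obstacle to be the passage to the limit in the state system: verifying the sequential weak--strong continuity of each nonlinear product in the coupled equations --- which is exactly where the $\overline{H}^2$ a priori bounds of Lemma~\ref{energy2} and the ensuing compactness are indispensable --- and, for the double-obstacle potential, preserving the subdifferential inclusion (\ref{firsttim2}) in the limit via maximal monotonicity.
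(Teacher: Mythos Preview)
Your proposal is correct and follows essentially the same approach as the paper: direct method, boundedness of the controls via $U_{ad}$ bounded or partial coercivity, Lemma~\ref{energy2} for the $H^2$-bounds on the states, weak subsequential limits plus compact embeddings to pass to the limit in all nonlinear products, and maximal monotonicity (demiclosedness) of $\partial\Psi_0$ for the inclusion (\ref{firsttim2}). One small slip: you assert at the outset that the infimum $j^{\ast}$ is finite, but this is not known a priori (and the paper explicitly allows $j^{\ast}=-\infty$ at that stage); it only follows \emph{a posteriori} once you have a feasible weak limit with $\mathcal{J}(\varphi,\mu,v,u)\le j^{\ast}$ --- your partial-coercivity argument still works, since a minimizing sequence cannot have $\mathcal{J}\to+\infty$ along any subsequence regardless.
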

\begin{proof}
By Theorem \ref{feaspoint} the feasible set of the problem ($P_\Psi$) is non-empty and contained in
$\overline{H}^2_{\partial_n}(\Omega)^{M} \times \overline{H}^2_{\partial_n}(\Omega)^{M}\times H^{1}_{0,\sigma}(\Omega;\mathbb{R}^N)^{M-1}\times U_{ad}$.

Let $\left\{(\varphi^{(k)},\mu^{(k)},v^{(k)},u^{(k)})\right\}_{k\in\mathbb{N}}$ be an infimizing sequence of ${\mathcal{J}}$
in $\overline{H}^2_{\partial_n}(\Omega)^{M} \times \overline{H}^2_{\partial_n}(\Omega)^{M}\times H^{1}_{0,\sigma}(\Omega;\mathbb{R}^N)^{M-1}\times U_{ad}$ with $(\varphi^{(k)},\mu^{(k)},v^{(k)})\in S_\Psi(u^{(k)})$
such that
\begin{align}
\lim_{k\rightarrow\infty}{\mathcal{J}}(\varphi^{(k)},\mu^{(k)},v^{(k)},u^{(k)})=\inf_{u\in U_{ad},(\varphi,\mu,v)\in S_\Psi(u)} {\mathcal{J}}(\varphi,\mu,v,u).\label{infJ}
\end{align}
Note that the infimum on the right-hand side may be $-\infty$.
The sequence $\left\{u^{(k)}\right\}_{k\in\mathbb{N}}$ is bounded in the reflexive Banach space $L^2(\Omega;\mathbb{R}^N)^{M-1}$.
This follows either directly from the boundedness of the set $U_{ad}$ or from the partial coercivity of $\mathcal{J}$.
Then by Lemma \ref{energy2} the sequence $(\varphi^{(k)},\mu^{(k)},v^{(k)})$ is bounded
in $\overline{H}^2_{\partial_n}(\Omega)^{M} \times \overline{H}^2_{\partial_n}(\Omega)^{M}\times H^{1}_{0,\sigma}(\Omega;\mathbb{R}^N)^{M-1}$.
Setting $\left\{w^{(k)}\right\}_{k\in\mathbb{N}}:=\left\{(\varphi^{(k)},\mu^{(k)},v^{(k)},u^{(k)})\right\}_{k\in\mathbb{N}}$, there exists a weakly convergent subsequence
$\left\{w^{(k_l)}\right\}_{l\in\mathbb{N}}$
with limit point $w^*:=(\varphi^*,\mu^*,v^*,u^*)\in \overline{H}^2_{\partial_n}(\Omega)^{M} \times \overline{H}^2_{\partial_n}(\Omega)^{M}\times H^{1}_{0,\sigma}(\Omega;\mathbb{R}^N)^{M-1}$.
Using the weak lower-semicontinuity of $\mathcal{J}$, this implies
\begin{align*}
-\infty<{\mathcal{J}}(w^*)\leq\liminf_{l\rightarrow\infty}\mathcal{J}(w^{(k_l)})=\inf_{u\in U_{ad},(\varphi,\mu,v)\in S_\Psi(u)} {\mathcal{J}}(\varphi,\mu,v,u) %
\end{align*}
where the last equality holds due to (\ref{infJ}).
Since $U_{ad}$ is %
weakly closed, %
$u^*$ belongs to $U_{ad}$.

It remains to show that $(\varphi^*,\mu^*,v^*)\in S_\Psi(u^*)$.
For this purpose, we write $l$ instead of $k_l$, and we start by considering the limit of
$\left\langle-\textnormal{div}(v^{(l)}_{i+1}\otimes \frac{\rho_2-\rho_1}{2}m(\varphi^{(l)}_{i-1})\nabla\mu^{(l)}_i),\psi\right\rangle$
for arbitrary $i\in\left\{0,..,M-2\right\}$ and $\psi\in H^1(\Omega;\mathbb{R}^N)$.
Using the triangle and H\"older's inequality we derive
\begin{align*}
&\left\| m(\varphi^{(l)}_{i-1})\nabla\mu^{(l)}_i\cdot\nabla\psi-m(\varphi^*_{i-1})\nabla\mu^*_i\cdot\nabla\psi\right\|_{L^{4/3}}\\
\leq& \left\| m(\varphi^{(l)}_{i-1})(\nabla\mu^{(l)}_i-\nabla\mu^*_i)\cdot\nabla\psi\right\|_{L^{4/3}}
+\left\| (m(\varphi^{(l)}_{i-1})-m(\varphi^*_{i-1}))\nabla\mu^*_i\cdot\nabla\psi\right\|_{L^{4/3}}\\
\leq& \left\| m(\varphi^{(l)}_{i-1})\right\|_{L^\infty}\hspace{-0.1cm} \left\|\nabla\mu^{(l)}_i\hspace{-0.1cm}-\nabla\mu^*_i \right\|_{L^4}\hspace{-0.1cm}\left\|\nabla\psi\right\|_{L^2}\hspace{-0.1cm}
+\hspace{-0.1cm}\left\| m(\varphi^{(l)}_{i-1})-m(\varphi^*_{i-1})\right\|_{L^\infty}\hspace{-0.1cm}\left\|\nabla\mu^*_i\right\|_{L^4}\hspace{-0.1cm}\left\|\nabla\psi\right\|_{L^2}.
\end{align*}
Since $\nabla\mu^{(l)}_i$ converges weakly to $\nabla\mu^*_i$ in $H^1(\Omega)$ and $H^1(\Omega)$ is compactly embedded into $L^4(\Omega)$,
$\left\|\nabla\mu^{(l)}_i-\nabla\mu^*_i \right\|_{L^4}$ tends to zero for $l\rightarrow\infty$.
Due to the compact embedding of $H^{2}(\Omega)$ into $W^{1,4}(\Omega)$,
we have $\varphi^{(l)}_{i-1}\rightarrow \varphi^*_{i-1}$ strongly in ${W^{1,4}}(\Omega)$.
Due to Assumption \ref{assum1}.1, $m$ is Lipschitz continuous.
Since ${W^{1,4}}(\Omega) $ can be embedded into $L^\infty(\Omega)$,
we infer $\left\| m(\varphi^{(l)}_{i-1})-m(\varphi^*_{i-1})\right\|_{L^\infty}\rightarrow 0$.

Consequently, the sequence $\frac{\rho_2-\rho_1}{2}m(\varphi^{(l)}_{i-1})\nabla\mu^{(l)}_i\cdot\nabla\psi$ converges strongly in $L^{4/3}(\Omega)$
to $\frac{\rho_2-\rho_1}{2}m(\varphi^*_{i-1})\nabla\mu^*_i\cdot\nabla\psi$.
By Sobolev's embedding theorem and the weak continuity of the embedding operator, $v^{(l)}_{i+1}$ converges weakly in
$L^{4}(\Omega)$ to $v^*_{i+1}$.
Hence $\left\langle\textnormal{div}(v^{(l)}_{i+1}\otimes\hspace{-0.05cm} \frac{\rho_2-\rho_1}{2}m(\varphi^{(l)}_{i-1})\nabla\mu^{(l)}_i),\psi\right\rangle$
converges to 
$\left\langle\textnormal{div}(v^*_{i+1}\otimes \frac{\rho_2-\rho_1}{2}m(\varphi^*_{i-1})\nabla\mu^*_i),\psi\right\rangle$ as $l\rightarrow\infty$.

One proceeds analogously for the remaining terms in the system (\ref{firsttim1})-(\ref{firsttim3})
which do not depend on the subdifferential of $\Psi_0$.

In this way we also show that $\Delta\varphi^{(l)}_{i+1}+\mu^{(l)}_{i+1}+\kappa\varphi^{(l)}_{i}$ converges strongly in $\overline{H}^{-1}(\Omega)$ to
$\Delta\varphi^*_{i+1}+\mu^*_{i+1}+\kappa\varphi^*_{i}$ for every $i=-1,..,M-2$.
Furthermore, $\varphi^{(l)}_{i+1}\rightarrow\varphi^*_{i+1}$ in ${\overline{H}^1}(\Omega)$, and for every $l\in\mathbb{N}$ 
it holds that
$\Delta\varphi^{(l)}_{i+1}+\mu^{(l)}_{i+1}+\kappa\varphi^{(l)}_{i}\in\partial\Psi_0(\varphi^{(l)}_{i+1})$.
Due to the maximal monotonicity of $\partial\Psi_0$, this implies
\begin{align}
\Delta\varphi^*_{i+1}+\mu^*_{i+1}+\kappa\varphi^*_{i}\in\partial\Psi_0(\varphi^*_{i+1})\label{feaslim}
\end{align}
for every $i=-1,..,M-2$. In summary, we have shown $(\varphi^*,\mu^*,v^*)\in S_\Psi(u^*)$.
Hence the $w^*$ is contained in the feasible set of the problem ($P_\Psi$) and therefore solves the problem.\end{proof}  

\section{Convergence of minimizers}
Now we turn our focus to the consistency of the regularization,
i.e. the convergence of a sequence of solutions to ($P_{\Psi^{(k)}}$) with $\Psi^{(k)}$ a double-well potential approaching the double-obstacle potential in the limit as $k\rightarrow\infty$,
to a solution of ($P_{{\Psi}}$) with $\Psi$ the double-obstacle potential.
For this purpose, we consider a sequence of functionals $\left\{\Psi^{(k)}\right\}_{k\in\mathbb{N}}$
satisfying Assumption \ref{assPsi}.2 and a corresponding limit functional $\overline{\Psi}$.

The following theorem provides conditions under which a sequence of globally optimal solutions to ($P_{\Psi^{(k)}}$) converge
to a global solution of ($P_{\overline{\Psi}}$), as $k\rightarrow\infty$.
\begin{theorem}[Consistency of the regularization]\label{conmin}
Let the assumptions of Theorem \ref{exsol} be fulfilled.
The objective $\mathcal{J}:\overline{H}^1(\Omega)^{M} \times \overline{H}^1(\Omega)^{M}\times H^{1}_{0,\sigma}(\Omega;\mathbb{R}^N)^{M-1}\times L^2(\Omega;\mathbb{R}^N)^{M-1}\rightarrow\mathbb{R}$
is supposed to be upper-semicontinuous,
and let $\left\{\Psi^{(k)}\right\}_{k\in\mathbb{N}}$ be a sequence of potentials satisfying Assumption \ref{assPsi}.2.
Assume further that $\overline{\Psi}$ is given such that
for every sequence
$\left\{(x^{(k)},y^{(k)})\right\}_{k\in\mathbb{N}}\subset \overline{H}^1(\Omega)\times \overline{H}^{-1}(\Omega)$
with $y^{(k)}={\Psi^{(k)}}'(x^{(k)})$
and $(x^{(k)},y^{(k)})\rightarrow (x^{(\infty)},y^{(\infty)})$ strongly in $\overline{H}^1(\Omega)\times \overline{H}^{-1}(\Omega)$
it holds that $y^{(\infty)}\in \partial{\overline{\Psi}}(x^{(\infty)})$.

Then a sequence $\left\{(\varphi^{(k)},\mu^{(k)},v^{(k)},u^{(k)})\right\}_{k\in\mathbb{N}}$ of \hspace{0.1cm}global\hspace{0.1cm} solutions \hspace{0.1cm}to ($P_{\Psi^{(k)}}$) in 
$\overline{H}^2(\Omega)^{M} \times \overline{H}^2(\Omega)^{M}\times H^{1}_{0,\sigma}(\Omega;\mathbb{R}^N)^{M-1}\times U_{ad}$
converges to a global solution of ($P_{\overline{\Psi}}$), provided that $\left\{\mathcal{J}(\varphi^{(k)},\mu^{(k)},v^{(k)},u^{(k)})\right\}_{k\in\mathbb{N}}$ is assumed bounded,
whenever $U_{ad}$ is unbounded.
\end{theorem}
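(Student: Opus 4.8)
The plan is to follow the classical direct-method recipe for showing consistency of a regularization in PDE-constrained optimization: first establish boundedness of the candidate sequence, extract weak limits, pass to the limit in the (regularized) state systems to land in the limiting feasible set, and finally play off lower/upper semicontinuity of $\mathcal{J}$ against a comparison with a reference competitor to conclude optimality. Concretely, let $\{(\varphi^{(k)},\mu^{(k)},v^{(k)},u^{(k)})\}_{k}$ be global solutions to ($P_{\Psi^{(k)}}$). By Theorem \ref{exsol} the relevant feasible sets are nonempty for $k$ large, so such solutions exist. First I would fix an arbitrary feasible point $(\hat\varphi,\hat\mu,\hat v,\hat u)$ of the \emph{limit} problem ($P_{\overline\Psi}$) — this exists by Theorem 3.x (existence of feasible points for the double-obstacle problem) — and, if needed, a companion sequence of feasible points of ($P_{\Psi^{(k)}}$) converging to it in order to obtain the crucial upper bound $\limsup_k \mathcal{J}(w^{(k)}) \le \mathcal{J}(\hat\varphi,\hat\mu,\hat v,\hat u)$ from optimality of $w^{(k)}$ for ($P_{\Psi^{(k)}}$).

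The second step is compactness. If $U_{ad}$ is bounded, $\{u^{(k)}\}$ is bounded in $L^2(\Omega;\mathbb{R}^N)^{M-1}$ directly; if $U_{ad}$ is unbounded, the assumed boundedness of $\{\mathcal{J}(w^{(k)})\}$ together with partial coercivity of $\mathcal{J}$ (from Theorem \ref{exsol}) forces boundedness of $\{u^{(k)}\}$. Then Lemma \ref{energy2} — whose constant depends on $u$ only through a bounded map — gives a uniform bound on $(\varphi^{(k)},\mu^{(k)},v^{(k)})$ in $\overline{H}^2_{\partial_n}(\Omega)^{M}\times\overline{H}^2_{\partial_n}(\Omega)^{M}\times H^1_{0,\sigma}(\Omega;\mathbb{R}^N)^{M-1}$. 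Extract a subsequence (relabeled) with $w^{(k)}\rightharpoonup w^* = (\varphi^*,\mu^*,v^*,u^*)$ weakly in $\mathcal{X}$, hence, by compact Sobolev embeddings ($H^2\hookrightarrow\hookrightarrow W^{1,4}\hookrightarrow L^\infty$ in dimension $N\le 3$, $H^1\hookrightarrow\hookrightarrow L^4$), strongly in the lower-order spaces. Since $U_{ad}$ is closed and convex, it is weakly closed, so $u^*\in U_{ad}$.

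The third step — which I expect to be the main technical obstacle — is to verify $(\varphi^*,\mu^*,v^*)\in S_{\overline\Psi}(u^*)$, i.e. feasibility for the limit problem. For every term in \eqref{firsttim1}, \eqref{firsttim2}, \eqref{firsttim3} \emph{not} involving the subdifferential, one passes to the limit exactly as in the proof of Theorem \ref{exsol}: the trilinear/transport terms are handled by splitting into a strongly convergent factor times a weakly convergent one (e.g. $m(\varphi^{(k)}_{i-1})\nabla\mu^{(k)}_i \to m(\varphi^*_{i-1})\nabla\mu^*_i$ strongly in a suitable $L^q$, paired against $v^{(k)}_{i+1}\rightharpoonup v^*_{i+1}$), using Lipschitz continuity of $m,\eta,\rho$ from Assumption \ref{assum1}. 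The delicate point is the chemical-potential equation \eqref{firsttim2}: writing $y^{(k)}_{i+1} := \Delta\varphi^{(k)}_{i+1} + \mu^{(k)}_{i+1} + \kappa\varphi^{(k)}_i = (\Psi^{(k)}_0)'(\varphi^{(k)}_{i+1})\in\overline{H}^{-1}(\Omega)$, one must show $y^{(k)}_{i+1}\to y^*_{i+1}$ \emph{strongly} in $\overline{H}^{-1}(\Omega)$ and $\varphi^{(k)}_{i+1}\to\varphi^*_{i+1}$ strongly in $\overline{H}^1(\Omega)$, so that the hypothesis on $\overline\Psi$ applies and yields $y^*_{i+1}\in\partial\overline\Psi(\varphi^*_{i+1})$, i.e. \eqref{feaslim}. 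Strong $\overline{H}^1$-convergence of $\varphi^{(k)}_{i+1}$ follows from the $H^2$-bound and compact embedding; strong $\overline{H}^{-1}$-convergence of $\Delta\varphi^{(k)}_{i+1}$ is then obtained by testing the difference of two copies of \eqref{firsttim2} with $\varphi^{(k)}_{i+1}-\varphi^*_{i+1}$ and using monotonicity of the subdifferentials to bound $\|\nabla(\varphi^{(k)}_{i+1}-\varphi^*_{i+1})\|_{L^2}$ — this is where the regularization's structure (each $(\Psi^{(k)}_0)'$ monotone) and the convergence hypothesis on $\overline\Psi$ are essential. Since the first two equations are decoupled at $i=-1$ and the time-stepping is explicit enough, one propagates this strong convergence forward in $i$.

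The final step assembles the optimality. We have $w^*$ feasible for ($P_{\overline\Psi}$). Upper semicontinuity of $\mathcal{J}$ gives $\mathcal{J}(w^*)\ge\limsup_k\mathcal{J}(w^{(k)})$, while the competitor bound from Step 1 gives $\limsup_k\mathcal{J}(w^{(k)})\le\mathcal{J}(\hat w)$ for every feasible $\hat w$ of ($P_{\overline\Psi}$); combined with weak lower semicontinuity of $\mathcal{J}$ (Theorem \ref{exsol}) one also gets $\mathcal{J}(w^*)\le\liminf_k\mathcal{J}(w^{(k)})$, so in fact $\mathcal{J}(w^{(k)})\to\mathcal{J}(w^*)$ and $\mathcal{J}(w^*)\le\mathcal{J}(\hat w)$ for all feasible $\hat w$. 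Hence $w^*$ is a global solution of ($P_{\overline\Psi}$) and the whole sequence (not just the subsequence) converges to it by the usual subsequence-of-subsequence argument together with uniqueness of the limit value. The one place requiring care beyond Step 3 is ensuring the competitor inequality $\limsup_k\mathcal{J}(w^{(k)})\le\mathcal{J}(\hat w)$: it needs a \emph{recovery sequence} of feasible points $\hat w^{(k)}\in S_{\Psi^{(k)}}(\hat u)$ (same control $\hat u$) with $\mathcal{J}(\hat w^{(k)})\to\mathcal{J}(\hat w)$, which one builds by solving \eqref{firsttim1}–\eqref{firsttim3} with $\Psi_0=\Psi^{(k)}_0$ and control $\hat u$, bounding the states via Lemma \ref{energy2}, extracting limits, and invoking the convergence hypothesis on $\overline\Psi$ once more to see the limit lies in $S_{\overline\Psi}(\hat u)$; uniqueness considerations or a further comparison then pin down the value.
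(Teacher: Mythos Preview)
Your overall architecture matches the paper's: bound the sequence, extract weak limits via Lemma \ref{energy2} and compact embeddings, pass to the limit in the state system exactly as in Theorem \ref{exsol} (using the hypothesis on $\overline\Psi$ in place of maximal monotonicity for \eqref{feaslim}), and then sandwich the objective via a recovery sequence. Steps 2 and 3 are essentially correct as sketched.

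The genuine gap is in your recovery-sequence construction. You propose to fix the optimal control $\hat u$ of ($P_{\overline\Psi}$) and solve the \emph{full} coupled system \eqref{firsttim1}--\eqref{firsttim3} with potential $\Psi_0^{(k)}$, hoping the resulting states $(\hat\varphi^{(k)},\hat\mu^{(k)},\hat v^{(k)})$ converge to $(\hat\varphi,\hat\mu,\hat v)$. But the coupled CHNS system has no uniqueness result available here (the Navier--Stokes part is genuinely nonlinear), so the weak limit of your recovery states may land on some \emph{other} element of $S_{\overline\Psi}(\hat u)$, and your chain only yields $\mathcal{J}(\overline w)\le\mathcal{J}(\tilde w)$ for this uncontrolled $\tilde w$ --- not $\mathcal{J}(\overline w)\le\mathcal{J}(\hat w)$. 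Your phrase ``uniqueness considerations or a further comparison then pin down the value'' is exactly where the argument breaks.

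The paper circumvents this by a different and quite deliberate construction: it freezes the \emph{velocity} $\hat v$ from the limit optimum, solves only the Cahn--Hilliard pair \eqref{firsttim1}--\eqref{firsttim2} with this fixed $\hat v$ and potential $\Psi_0^{(k)}$ --- this decoupled problem \emph{is} uniquely solvable, since the operator $L^{(k)}_a(\varphi,\mu)=(-\Delta\varphi+(\Psi_0^{(k)})'(\varphi)-\mu,\ \varphi-\mathop{\rm div}(a\nabla\mu))$ is monotone, coercive and continuous --- and then \emph{defines} $\hat u^{(k)}$ as the residual that makes the Navier--Stokes equation \eqref{firsttim3} hold with the triple $(\hat\varphi^{(k)},\hat\mu^{(k)},\hat v)$. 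Uniqueness of the CH subproblem forces $(\hat\varphi^{(k)},\hat\mu^{(k)})\to(\hat\varphi,\hat\mu)$; the paper then invests extra work to upgrade this to $\hat\mu^{(k)}\to\hat\mu$ \emph{strongly in $H^2$}, precisely so that $\hat u^{(k)}\to\hat u$ strongly in $L^2$ (the residual involves $\nabla\hat\mu^{(k)}_i$ inside a divergence). This strong convergence of the whole recovery tuple in $\overline{H}^1\times\overline{H}^1\times H^1_{0,\sigma}\times L^2$ is what legitimizes applying upper semicontinuity of $\mathcal{J}$ in the final chain $\mathcal{J}(\overline w)\le\lim\mathcal{J}(w^{(k)})\le\lim\mathcal{J}(\hat w^{(k)})\le\mathcal{J}(\hat w)$.

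A smaller slip: you invoke upper semicontinuity to get $\mathcal{J}(w^*)\ge\limsup\mathcal{J}(w^{(k)})$ directly, but $u^{(k)}$ only converges \emph{weakly} in $L^2$, so upper semicontinuity (stated for the strong topology) does not apply to $w^{(k)}$ itself; in the paper it is used only on the recovery sequence, which converges strongly by construction.
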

\begin{proof}
First note that the sequence $\left\{u^{(k)}\right\}_{k\in\mathbb{N}}$ is bounded in the reflexive Banach space $L^2(\Omega;\mathbb{R}^N)^{M-1}$.
This follows either from the boundedness of the set $U_{ad}$ or from the partial coercivity of $\mathcal{J}$ and the boundedness of
$\left\{\mathcal{J}(\varphi^{(k)},\mu^{(k)},v^{(k)},u^{(k)})\right\}_{k\in\mathbb{N}}$.
By Lemma \ref{energy2}, the sequence $\left\{(\varphi^{(k)},\mu^{(k)},v^{(k)})\right\}_{k\in\mathbb{N}}$ is bounded
in $\overline{H}^2_{\partial_n}(\Omega)^{M} \times \overline{H}^2_{\partial_n}(\Omega)^{M}\times H^{1}_{0,\sigma}(\Omega;\mathbb{R}^N)^{M-1}$.
Hence there exists a weakly convergent sequence
$\left\{w^{(k_l)}\right\}_{l\in\mathbb{N}}:=\left\{(\varphi^{(k_l)},\mu^{(k_l)},v^{(k_l)},u^{(k_l)})\right\}_{l\in\mathbb{N}}$
with limit point
$\overline{w}\hspace{-0.1cm}:=\hspace{-0.1cm}(\overline{\varphi},\overline{\mu},\overline{v},\overline{u})\hspace{-0.1cm}\in\hspace{-0.1cm}\overline{H}^2_{\partial_n}(\Omega)^{M} \times \overline{H}^2_{\partial_n}(\Omega)^{M}\times H^{1}_{0,\sigma}(\Omega;\mathbb{R}^N)^{M-1}$.
Moreover, since $U_{ad}$ is weakly closed, $\overline{u}$ belongs to $U_{ad}$.

As in the proof of Theorem \ref{exsol}, it can be shown that the limit point satisfies $(\overline{\varphi},\overline{\mu},\overline{v})\in S_{\overline{\Psi}}(\overline{u})$.
The only difference is that inclusion (\ref{feaslim}) follows from the above assumption instead of the maximal monotonicity.

Next, we prove that $\overline{w}$ is an optimal point of ($P_{\overline{\Psi}}$).
For this purpose, let $(\widehat{\varphi},\widehat{\mu},\widehat{v},\widehat{u})$ be an optimal solution of ($P_{\overline{\Psi}}$). 
We consider a sequence $(\widehat{\varphi}^{(k)},\widehat{\mu}^{(k)})\in \overline{H}^2_{\partial_n}(\Omega)^{M} \times \overline{H}^2_{\partial_n}(\Omega)^{M}$
such that
\begin{align*}
\left\langle\frac{\widehat{\varphi}^{(k)}_{i+1} -\widehat{\varphi}^{(k)}_{i} }{\tau},\phi\right\rangle
+\left\langle \widehat{v}_{i+1}\nabla\widehat{\varphi}^{(k)}_{i},\phi\right\rangle
-\left\langle\textnormal{div}(m(\widehat{\varphi}^{(k)}_{i})\nabla\widehat{\mu}^{(k)}_{i+1}),\phi\right\rangle=0,\\ 
\left\langle -\Delta\widehat{\varphi}^{(k)}_{i+1},\phi\right\rangle
+\left\langle \left(\Psi_0^{(k)}\right)'(\widehat{\varphi}^{(k)}_{i+1}),\phi\right\rangle
-\left\langle \widehat{\mu}^{(k)}_{i+1},\phi\right\rangle
-\left\langle \kappa\widehat{\varphi}^{(k)}_{i},\phi\right\rangle=0, 
\end{align*}
for every $\phi\in \overline{H}^1(\Omega)$ and $i\in\left\{-1,..,M-2\right\}$,
where $\widehat{v}$ corresponds to the previously specified solution of ($P_{\overline{\Psi}}$).
Note that the operator $L^{(k)}_a:\overline{H}^1(\Omega)\times \overline{H}^1(\Omega)\rightarrow \overline{H}^{-1}(\Omega)\times \overline{H}^{-1}(\Omega)$ defined by
\begin{align}
L^{(k)}_a(\varphi,\mu):=\left(-\Delta\varphi+\left(\Psi^{(k)}_0\right)'(\varphi)-\mu,\varphi-\textnormal{div}(a\nabla\mu)\right)
\end{align}
is monotone, coercive and continuous, if $a\in H^2(\Omega)$ satisfies $0<\tau b_1\leq a(x)\leq \tau b_2$ almost everywhere on $\Omega$.
Hence for fixed $k\in\mathbb{N}$, the pair $(\widehat{\varphi}^{(k)}_{i+1},\widehat{\mu}^{(k)}_{i+1})$ of each subsequent time step is uniquely determined as the solution to
\begin{align}
L^{(k)}_{m(\widehat{\varphi}^{(k)}_{i})\tau}(\widehat{\varphi}^{(k)}_{i+1},\widehat{\mu}^{(k)}_{i+1})
=(\kappa \widehat{\varphi}^{(k)}_{i},\widehat{\varphi}^{(k)}_{i}-\tau\widehat{v}_{i+1}\nabla\widehat{\varphi}^{(k)}_{i})
\end{align}
where $0<\tau b_1\leq a:=m(\widehat{\varphi}^{(k)}_{i})h\leq \tau b_2$ almost everywhere on $\Omega$ (cf. \cite[Chapter II, Theorem 2.2]{Showalter1997}.
Then, by Lemma \ref{regsol} the sequence $(\widehat{\varphi}^{(k)},\widehat{\mu}^{(k)},\widehat{v})_{k\in\mathbb{N}}$ is bounded in
$\overline{H}^2(\Omega)^{M} \times \overline{H}^2(\Omega)^{M}\times H^{1}_{0,\sigma}(\Omega;\mathbb{R}^N)^{M-1}$.
Consequently, there exists a subsequence (denoted the same) which converges weakly in the associated product space to a limit point $(\widehat{\varphi}^*,\widehat{\mu}^*,\widehat{v})$.
In accordance with the above observations, 
$(\widehat{\varphi}_{i+1}^*,\widehat{\mu}_{i+1}^*)$ is the unique solution to
\begin{align*}
\left\langle\frac{\widehat{\varphi}^*_{i+1} -\widehat{\varphi}^*_{i} }{\tau},\phi\right\rangle
+\left\langle \widehat{v}_{i+1}\nabla\widehat{\varphi}^*_{i},\phi\right\rangle
-\left\langle\textnormal{div}(m(\widehat{\varphi}^*_{i})\nabla\widehat{\mu}^*_{i+1}),\phi\right\rangle=0,\ \forall \phi\in \overline{H}^1(\Omega),\\
\left\langle -\Delta\widehat{\varphi}^*_{i+1},\phi\right\rangle
+\left\langle \partial\Psi_0^*(\widehat{\varphi}^*_{i+1}),\phi\right\rangle
-\left\langle \widehat{\mu}^*_{i+1},\phi\right\rangle
-\left\langle \kappa\widehat{\varphi}^*_{i},\phi\right\rangle=0,\ \forall \phi\in \overline{H}^1(\Omega)
\end{align*}
for every $i\in\left\{-1,..,M-2\right\}$.
Note that here we also use the prerequisite that $y^*_{i+1}\in\partial\Psi^*_0(\widehat{\varphi}^*_{i+1}) $
when $(y^{(k)}_{i+1}, \widehat{\varphi}^{(k)}_{i+1})\rightarrow(y^*_{i+1},\widehat{\varphi}^*_{i+1})$ with $y^{(k})_{i+1}={\Psi^{(k)}_0}'(\widehat{\varphi}^{(k)}_{i+1}) $.
Since the feasibility of $(\widehat{\varphi},\widehat{\mu},\widehat{v},\widehat{u})$ implies
$(\widehat{\varphi},\widehat{\mu},\widehat{v})\in S_{\overline{\Psi}}(\widehat{u})$, this yields
$\widehat{\varphi}^*=\widehat{\varphi}$ and $\widehat{\mu}^*=\widehat{\mu}$.

Now we show that $\widehat{\mu}^{(k)}$ converges strongly in $(\overline{H}^2_{\partial_n}(\Omega))^M$ to $\widehat{\mu}^*$.
For this purpose, fix $i\in \left\{-1,..,M-2\right\}$ and define
\begin{align}
g_i^{(k)}:=\frac{\widehat{\varphi}^{(k)}_{i+1} -\widehat{\varphi}^{(k)}_{i} }{\tau}+\widehat{v}_{i+1}\nabla\widehat{\varphi}^{(k)}_{i},\quad
g_i^*:=\frac{\widehat{\varphi}^*_{i+1} -\widehat{\varphi}^*_{i} }{\tau}
+\widehat{v}_{i+1}\nabla\widehat{\varphi}^*_{i}.
\end{align}
By the Rellich-Kondrachov theorem $g_i^{(k)}$ converges strongly in $L^2(\Omega)$ to $g^*_i$.
It further holds that
$g_i^{(k)}-g_i^*
=\textnormal{div}(m(\widehat{\varphi}^{(k)}_{i})\nabla\widehat{\mu}^{(k)}_{i+1})
-\textnormal{div}(m(\widehat{\varphi}^*_{i})\nabla\widehat{\mu}^*_{i+1}).
$
Hence, we have
\begin{align*}
\textnormal{div}(m(\widehat{\varphi}^*_{i})\nabla(\widehat{\mu}^{(k)}_{i+1}-\widehat{\mu}^*_{i+1}))
=&g_i^{(k)}-g_i^*
-\textnormal{div}((m(\widehat{\varphi}^{(k)}_{i})-m(\widehat{\varphi}^*_{i}))\nabla\widehat{\mu}^{(k)}_{i+1})=: \delta_i^{(k)}.
\end{align*}
Again by the Rellich-Kondrachov theorem $m(\widehat{\varphi}^{(k)}_{i})$ converges strongly to $m(\widehat{\varphi}^*_{i})$ in $W^{1,5}(\Omega)$.
Furthermore, $\nabla\widehat{\mu}^{(k)}_{i+1}$ is bounded in $H^1(\Omega)$.
As a consequence, $\delta_i^{(k)}\rightarrow 0$ strongly in $L^2(\Omega)$.
Applying \cite[Theorem 2.3.1]{Maugeri2000}, we conclude
\begin{align*}
\left\|\widehat{\mu}^{(k)}_{i+1}-\widehat{\mu}^*_{i+1}\right\|_{H^2}\leq C\left\|\delta_i^{(k)}\right\|\rightarrow 0.
\end{align*}
Next, we define $\widehat{u}_{i+1}^{(k)}\in L^2(\Omega;\mathbb{R}^N)$ for all $i\in\left\{0,..,M-2\right\}$ by
\begin{align*}
\widehat{u}^{(k)}_{i+1}&:=
\frac{\rho(\widehat{\varphi}^{(k)}_{i}) \widehat{v}_{i+1}-\rho(\widehat{\varphi}^{(k)}_{i-1}) \widehat{v}_i}{\tau}
+\textnormal{div}(\widehat{v}_{i+1}\otimes \rho(\widehat{\varphi}^{(k)}_{i-1})\widehat{v}_i)\\
&\hspace{1cm}-\textnormal{div}(\widehat{v}_{i+1}\otimes \frac{\rho_2-\rho_1}{2}m(\widehat{\varphi}^{(k)}_{i-1})\nabla\widehat{\mu}^{(k)}_i)\\
&\hspace{2cm}-\textnormal{div}(2\eta(\widehat{\varphi}^{(k)}_{i})\epsilon(\widehat{v}_{i+1}))
-\widehat{\mu}^{(k)}_{i+1}\nabla\widehat{\varphi}^{(k)}_{i}.
\end{align*} 
Similarly to the proof of Theorem \ref{exsol}, it can be shown that $\widehat{u}^{(k)}$ converges strongly in $L^2(\Omega;\mathbb{R}^N)^{M-1}$ to $\widehat{u}$.

Summarizing, the sequence $\left\{(\widehat{\varphi}^{(k)},\widehat{\mu}^{(k)},\widehat{v},\widehat{u}^{(k)})\right\}_{k\in\mathbb{N}}$ converges towards
$(\widehat{\varphi},\widehat{\mu},\widehat{v},\widehat{u})$ strongly in
$\overline{H}^1(\Omega)^{M} \times \overline{H}^1(\Omega)^{M}\times H^{1}_{0,\sigma}(\Omega;\mathbb{R}^N)^{M-1}\times L^2(\Omega;\mathbb{R}^N)^{M-1}$.
Employing the continuity properties of the objective functional $\mathcal{J}$, this yields
\begin{align}
\mathcal{J}(\overline{\varphi},\overline{\mu},\overline{v},\overline{u})
&\leq \lim_{k\rightarrow\infty} \mathcal{J}(\varphi^{(k)},\mu^{(k)},v^{(k)},u^{(k)})
\leq \lim_{k\rightarrow\infty} \mathcal{J}(\widehat{\varphi}^{(k)},\widehat{\mu}^{(k)},\widehat{v},\widehat{u}^{(k)})\nonumber\\
&\leq  \mathcal{J}(\widehat{\varphi},\widehat{\mu},\widehat{v},\widehat{u}).
\end{align}
Since $(\widehat{\varphi},\widehat{\mu},\widehat{v},\widehat{u})$ is optimal, the assertion holds true.\end{proof}

In summary, the optimal control problems under consideration are well-posed and admit globally optimal solutions.
Furthermore, the chosen regularization approach is consistent in the sense of Theorem \ref{conmin}.

%

\section{Stationarity conditions}
Now we turn our attention to the derivation of stationarity conditions for the optimal control problem.
For smooth potentials $\Psi_0 $ stationarity or first-order optimality conditions for the problem~($P_\Psi$) can be derived by applying classical results concerning the existence of Lagrange multipliers.
The latter approach is employed in the following theorem.
\begin{theorem}[First-order optimality conditions for smooth potentials]\label{T:Mult}
	Let $\mathcal{J}:\overline{H}^1(\Omega )^ M \times \overline{H}^1(\Omega )^ M \times H^1_{0,\sigma}(\Omega ;\mathbb{R}^N )^{M-1} \times L^2(\Omega;\mathbb{R}^N)^{M-1}\rightarrow\mathbb{R}$
	be Fr\'echet differentiable and
	let $\Psi_0$  satisfy Assumption \ref{assPsi}.2 
	such that $\Psi_0'$ maps $ \overline{H}^2_{\partial_n }(\Omega )$ continuously Fr\`echet-differentiably
	into $L^2(\Omega )$.
	Further, let
		$\overline {z}:=(\bar\varphi ,\bar\mu ,\bar v ,\bar u )$ be a minimizer of ($P_\Psi$).
	Then there exist $( p,r,q )\in{ \overline{L}^2(\Omega ) }^ M \times { \overline{L}^2(\Omega ) }^ M \times { H^1_{0,\sigma}(\Omega ;\mathbb{R}^N ) }^{M-1} $,
 		$ p =( p _{-1},... p _{{M-2}})$,
		$ r =( r _{-1},... r _{{M-2}})$,
		$ q =( q _{0},... q _{{M-2}})$,
		such that
	\begin{align}	
							- \frac1 \tau ( p _{i} - p _{i-1} ) + a ( m'(\varphi_{i}), \mu _{i+1} , p _{i} ) - \mathop{\rm div}( p _{i} v _{i+1} ) - \Delta^t r _{i-1} 				\nonumber \\»\hskip1cm
								+ \Psi_0'' (\varphi _{i} )^* r _{i-1} 
								- \kappa r _{i+1} - \frac 1 \tau \rho' (\varphi_{i}) v _{i+1} \cdot( q _{i+1} - q _{i} )	 							\nonumber \\»\hskip1cm
								- (\rho' (\varphi_{i}) v _{i+1}  -\frac{\rho_2-\rho_1}{2}m'(\varphi_{i})\nabla \mu _{i+1} ) (D q _{i+1} )^\top v _{i+2} 									\nonumber \\»\hskip1cm
								+ 2\eta' (\varphi_{i}) \epsilon( v _{i+1} ) : D q _{i} + \mathop{\rm div}( \mu _{i+1} q _{i} )									 
					& \>=\>		\frac {\partial \mathcal{J}}{\partial \varphi _{i} }(\overline {z}),														\label{T:Mult.1}\\»
							- r _{i-1} + b ( m(\varphi_{i-1}) , p _{i-1} )													
								- \mathop{\rm div}( \frac{\rho_2-\rho_1}{2}m(\varphi_{i-1}) (D q _{i} )^\top v _{i+1} )												\nonumber \\»\hskip1cm
								- q _{i-1} \cdot\nabla \varphi _{i-1} 															 
					& \>=\>	\frac {\partial \mathcal{J}}{\partial \mu _{i} }(\overline {z}),															\label{T:Mult.2}\\»
 							- \frac 1 \tau \rho (\varphi_{j-1}) ( q _{j} - q _{j-1} ) - \rho (\varphi_{j-1}) (D q _{j} )^\top v _{j+1} 									\nonumber \\»\hskip1cm
								- (D q _{j-1} )( \rho (\varphi_{j-2}) v _{j-1}  -\frac{\rho_2-\rho_1}{2}m(\varphi_{j-2}) \nabla \mu _{j-1} )									\nonumber \\»\hskip1cm
								- \mathop{\rm div}( 2 \eta (\varphi_{j-1}) \epsilon( q _{j-1} ) ) + p _{j-1} \nabla \varphi _{j-1} 								 
					& \>=\>	\frac {\partial \mathcal{J}}{\partial v _{j} }(\overline {z}),															\label{T:Mult.3}\\»
					\Big(
							\frac {\partial \mathcal{J}}{\partial u _{k} }(\overline {z}) -  q _{k-1} 
					\Big)_{k=1}^{M-1}
					& \>\in\>	\big[\mathbb{R}_+( U _{ad}-\bar u )\big]^+,																		\label{T:Mult.4}
	\end{align}
	for all $i=0,...,{M-1}$ and $j=1,...,{M-1}$.
 Here, $\big[\mathbb{R}_+( U _{ad}-\bar u )\big]^+$ denotes the polar cone of the set $\left\{r(w-u)|w\in U_{ad} \wedge r\in\mathbb{R}^+\right\}$.
	Furthermore, we use the convention that $ p _{i} , r _{i} , q _{i} $ are equal to $0$ for $i\ge{M-1}$ along with $ q _{-1}$ and
		$\varphi _{i} ,\mu _{i} , v _{i} $ for $i\ge M $.
	Moreover, $ a (\hat f ,\hat w ,\hat p ), b (\hat m ,\hat p ), \Delta^t(\hat r ) \in \overline{H}^2_{\partial_n }(\Omega )^*$ are defined by
			$\langle \Delta^t\hat r , \hat z \rangle		:=		\int_\Omega \hat r \Delta\hat zdx$, 
	$\langle a (\hat f ,\hat w ,\hat p ) , \hat z \rangle		:=		\int_\Omega -\hat p \mathop{\rm div}(\hat f \hat z \nabla \hat w )dx$,				
			$\langle b (\hat m ,\hat p ) , \hat z \rangle		:=		\int_\Omega -\hat p \mathop{\rm div}(\hat m \nabla \hat z )dx$,						
	for functions $\hat f ,\hat m \in C^1(\overline \Omega ),\hat w \in H^1(\Omega ),\hat r ,\hat p \in L^2(\Omega )$ and $\hat z \in \overline{H}^2_{\partial_n }(\Omega )$.
\end{theorem}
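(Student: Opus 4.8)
The plan is to recast ($P_\Psi$) as a smooth equality-constrained optimization problem and to invoke a classical Lagrange-multiplier (regular-point) theorem in Banach space, as announced before the statement. Since $\Psi_0$ now satisfies Assumption~\ref{assPsi}.2, equation~(\ref{firsttim2}) is a genuine differentiable equation rather than a variational inequality, so we may collect the residuals of~(\ref{firsttim1})--(\ref{firsttim3}) into a map
\begin{align*}
e\colon \mathcal{Z}&:=\overline{H}^2_{\partial_n}(\Omega)^{M}\times\overline{H}^2_{\partial_n}(\Omega)^{M}\times H^1_{0,\sigma}(\Omega;\mathbb{R}^N)^{M-1}\times L^2(\Omega;\mathbb{R}^N)^{M-1}\\
&\longrightarrow \overline{L}^2(\Omega)^{M}\times\overline{L}^2(\Omega)^{M}\times \big(H^1_{0,\sigma}(\Omega;\mathbb{R}^N)^*\big)^{M-1}=:\mathcal{W},
\end{align*}
and consider $\min_{z\in\mathcal{Z}}\mathcal{J}(z)$ subject to $e(z)=0$ and $u\in U_{ad}$; here the variables $(\varphi,\mu,v)$ are treated as independent of $u$, so no uniqueness of the state is needed. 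Using Lemma~\ref{regsol} and the embeddings $H^2(\Omega)\hookrightarrow W^{1,6}(\Omega)\cap L^\infty(\Omega)$ valid for $N\le 3$, one checks that every product occurring in~(\ref{firsttim1})--(\ref{firsttim3}) actually lands in $\overline{L}^2(\Omega)$, respectively in $H^1_{0,\sigma}(\Omega;\mathbb{R}^N)^*$, so $e$ indeed maps $\mathcal{Z}$ into $\mathcal{W}$.

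First I would verify that $e$ is continuously Fr\'echet differentiable. This reduces to the chain rule for the Nemytskii operators generated by $m,\eta\in C^2(\mathbb{R})$, by $\rho$ (argued on the range where $\rho(\varphi_i)>0$, cf.\ the discussion preceding Theorem~\ref{Linftycon}, on which $\rho$ is affine and smooth), and by $\Psi_0'$, whose continuous differentiability from $\overline{H}^2_{\partial_n}(\Omega)$ into $L^2(\Omega)$ is part of the hypotheses; the transport, convective and capillary terms are continuous multilinear maps on the chosen spaces and hence smooth.

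The decisive step is the constraint qualification: I would show that the partial linearization $\partial_{(\varphi,\mu,v)}e(\overline z)$ maps $\overline{H}^2_{\partial_n}(\Omega)^{M}\times\overline{H}^2_{\partial_n}(\Omega)^{M}\times H^1_{0,\sigma}(\Omega;\mathbb{R}^N)^{M-1}$ onto $\mathcal{W}$ (in fact bijectively), arguing by induction over the time index exactly as in the existence proof: at each step the linearized system splits into a linearized Cahn--Hilliard pair and a linearized Stokes-type equation for the velocity increment, each of the monotone--coercive--continuous type of the operator $L^{(k)}_a$ from the proof of Theorem~\ref{conmin}, so Lax--Milgram / Browder--Minty yields unique solvability and Lemma~\ref{regsol} lifts the solution into the $H^2$-spaces, closing the induction. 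With $e'(\overline z)$ surjective and $U_{ad}$ convex, the regular-point condition of the Zowe--Kurcyusz type multiplier theorem~\cite{Zowe1979} is satisfied, which yields a multiplier $(p,r,q)$ in the topological dual $\mathcal{W}^*$, i.e.\ in $\overline{L}^2(\Omega)^{M}\times\overline{L}^2(\Omega)^{M}\times H^1_{0,\sigma}(\Omega;\mathbb{R}^N)^{M-1}$ (the $H^1_{0,\sigma}$-regularity of $q$ coming from the Stokes block, the $\overline{L}^2$-regularity of $p,r$ from a regularity bootstrap on the adjoint Cahn--Hilliard block of the type used in Lemma~\ref{regsol}), such that $e'(\overline z)^*(p,r,q)=\mathcal{J}'(\overline z)$ holds on the $(\varphi,\mu,v)$-slots while the $u$-slot reads $\partial_u\mathcal{J}(\overline z)-\big(\partial_u e(\overline z)\big)^*(p,r,q)\in[\mathbb{R}_+(U_{ad}-\overline u)]^+$.

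Finally I would spell out $e'(\overline z)^*$ componentwise by differentiating each term of~(\ref{firsttim1})--(\ref{firsttim3}) with respect to $\varphi_i,\mu_i,v_j,u_k$ and moving derivatives back onto the adjoint variables by integration by parts: the discrete time derivatives become the backward differences $-\tfrac1\tau(p_i-p_{i-1})$ and $-\tfrac1\tau\rho(\varphi_{j-1})(q_j-q_{j-1})$; the transport and diffusion terms become the operators $a(\cdot,\cdot,\cdot)$, $b(\cdot,\cdot)$ and $\Delta^t$; the convective and capillary terms produce $\mathop{\rm div}(p_iv_{i+1})$, $\mathop{\rm div}(\mu_{i+1}q_i)$ and the $Dq$-contractions recorded in the statement; $\partial_\varphi\Psi_0'(\varphi_{i+1})$ contributes $\Psi_0''(\varphi_i)^*r_{i-1}$; and $\partial_\varphi(\kappa\varphi_i)$ contributes $-\kappa r_{i+1}$. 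The stated index conventions ($p_i=r_i=q_i=0$ for $i\ge M-1$, $q_{-1}=0$, and $\varphi_i,\mu_i,v_i=0$ for $i\ge M$) are precisely the terminal/initial conditions of the backward system forced by $\varphi_{-1},v_0$ being fixed data and $v_{M-1}$ being the last velocity, while $\big(\partial_u e(\overline z)\big)^*$ acts on the control only through~(\ref{firsttim3}), whose transpose pairs $u_{i+1}$ with $q_i$ and gives~(\ref{T:Mult.4}). The main obstacle is the constraint qualification --- producing, at each strongly coupled linearized time slice, a unique $H^2$-regular solution so that $e'(\overline z)$ is onto all of $\overline{L}^2(\Omega)^{M}\times\overline{L}^2(\Omega)^{M}\times\big(H^1_{0,\sigma}(\Omega;\mathbb{R}^N)^*\big)^{M-1}$; the remaining adjoint bookkeeping, though lengthy, is routine.
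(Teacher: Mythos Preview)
Your proposal follows essentially the same route as the paper: cast the system as a smooth equality constraint $g:X\to Y$ with $Y=\overline{L}^2(\Omega)^M\times\overline{L}^2(\Omega)^M\times(H^1_{0,\sigma})^{*\,M-1}$, verify $C^1$-smoothness of $g$ via the superposition operators, establish the Zowe--Kurcyusz constraint qualification by forward induction on the linearized time slices, and then read off the adjoint system componentwise. Two small points where the paper differs from your sketch. First, for the constraint qualification the linearized time slice does \emph{not} decouple into a Cahn--Hilliard block and a Stokes block --- the terms $v^\delta_{i+2}\!\cdot\!\nabla\varphi_{i+1}$ and $-\mu^\delta_{i+2}\nabla\varphi_{i+1}$ genuinely couple the three unknowns at the new step, so a direct Lax--Milgram/$L^{(k)}_a$-type argument is not quite enough; instead the paper observes that the linearized system has \emph{exactly} the form~(\ref{A2})--(\ref{A1}) with an affine potential $\psi$ satisfying $D\psi=\Psi_0''(\varphi_{i+2};\cdot)$, and simply reapplies Theorem~\ref{Exist!} and Lemma~\ref{regsol}. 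Second, no bootstrap is needed for the multiplier regularity in this theorem: $(p,r,q)\in Y^*=\overline{L}^2(\Omega)^M\times\overline{L}^2(\Omega)^M\times H^1_{0,\sigma}(\Omega;\mathbb{R}^N)^{M-1}$ directly, and the $\overline{H}^1$-regularity of $p,r$ that you allude to is proved separately in Lemma~\ref{L:Regul}.
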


\begin{proof}
	Utilizing the spaces $ X $ and $ Y $ and the set $»C$ given by
	\begin{align*}
			X 	&:= \overline{H}^2_{\partial_n }(\Omega )^ M \times \overline{H}^2_{\partial_n }(\Omega )^ M \times H^1_{0,\sigma}(\Omega ;\mathbb{R}^N )^{M-1} \times L^2(\Omega;\mathbb{R}^N)^{M-1}, 		\\
			»C	&:= \overline{H}^2_{\partial_n }(\Omega )^ M \times \overline{H}^2_{\partial_n }(\Omega )^ M \times H^1_{0,\sigma}(\Omega ;\mathbb{R}^N )^{M-1} \times U _{ad},	\\
			Y 	&:={( \overline{L}^2(\Omega ) )^*}^ M \times {( \overline{L}^2(\Omega ) )^*}^ M \times { H^1_{0,\sigma}(\Omega ;\mathbb{R}^N ) ^*}^{M-1}, 
	\end{align*}
	for
		$\varphi =(\varphi _0,...,\varphi _{M-1})$,
		$\mu =(\mu _0,...,\mu _{M-1})$, 
		$ v =( v _1,..., v _{M-1})$, 
		$ u =( u _1,..., u _{M-1})$
	we define a mapping $g: X \rightarrow Y $
		by
	\begin{align*}
		&	g(\varphi ,\mu , v , u )\\	
		&	:=\left(\begin{array}{l}
						\hspace{-0.1cm}\left(\begin{array}{l} 	 \frac1 \tau ( \varphi _{i+1} - \varphi _{i} ) - \mathop{\rm div}( m(\varphi_{i}) \nabla \mu _{i+1} ) + v _{i+1} \cdot\nabla \varphi _{i} 							\end{array}\right)_{i=-1}^{M-2}	\\[3pt]
						\hspace{-0.1cm}\left(\begin{array}{l} 	- \mu _{i+1} - \Delta \varphi _{i+1} + \Psi_0'( \varphi _{i+1}) - \kappa \varphi _{i} 									\end{array}\right)_{i=-1}^{M-2}	\\[3pt]
						\hspace{-0.1cm}\left(\begin{array}{l} 	\frac1 \tau ( \rho (\varphi_{i}) v _{i+1} - \rho (\varphi_{i-1}) v _{i} )- \mathop{\rm div}(2 \eta (\varphi_{i}) \epsilon( v _{i+1} ) )  							\\[3pt]
								+ \mathop{\rm div}( v _{i+1} \otimes ( \rho (\varphi_{i-1}) v _{i}  -\frac{\rho_2-\rho_1}{2}m(\varphi_{i-1}) \nabla \mu _{i} ))- \mu _{i+1} \nabla \varphi _{i} - u _{i+1} 									\end{array}\right)_{i=0}^{M-2}	
					\hspace{-0.1cm}\end{array}\right).
	\end{align*}
	Then, ($P_\Psi$) can be stated as
			$\min \{ 	\mathcal{J}(\varphi ,\mu , v , u ) :	(\varphi ,\mu , v , u )\in»C,\enspace 	g(\varphi ,\mu , v , u )=0		\}$,
	with $\overline {z}=(\bar\varphi ,\bar\mu ,\bar v ,\bar u )$ an associated minimizer.
	The mapping $g$ is continuously Fr\`echet differentiable from $ X $ into $ Y $.
	To see this, let us exemplarily consider the term $\mathop{\rm div}( m(\varphi_{i}) \nabla \mu _{i+1} )$.  
	The other terms can be treated analogously.
	First note that $\mathop{\rm div}( m(\varphi_{i}) \nabla \mu _{i+1} )$ equals $\nabla m(\varphi_{i}) \cdot\nabla \mu _{i+1} + m(\varphi_{i}) \Delta \mu _{i+1} $
		where $ m(\varphi_{i}) $ is given by $ m ( \varphi _{i} )$.
	Hence $\nabla m(\varphi_{i}) = m '( \varphi _{i} )\nabla \varphi _{i} $.
	Assumption~\ref{assum1} implies that both superposition operators
	$
			\widetilde\varphi \mapsto m (\widetilde\varphi ),$ $\widetilde\varphi \mapsto m '(\widetilde\varphi )
	$
	are continuously Fr\`echet differentiable from $H^2(\Omega )\hookrightarrow L^\infty (\Omega )$ into $L^\infty (\Omega )$ (cf.~\cite{Troeltzsch2010}).
	Therefore, the mappings
	\def\xa#1#2#3{#1 & \enspace \mapsto \enspace \hbox to 2.4cm{$#2$\hss}:\enspace #3}
				$(\widetilde\varphi ,\widetilde\mu )\rightarrow		m '(\widetilde\varphi )\nabla \widetilde\varphi \cdot\nabla \widetilde\mu :		H^2(\Omega )\times H^2(\Omega )\rightarrow L^3(\Omega )	 $ and
				$(\widetilde\varphi ,\widetilde\mu )\rightarrow		m (\widetilde\varphi )\Delta\widetilde\mu 				:H^2(\Omega )\times H^2(\Omega )\rightarrow L^2(\Omega )$,				 
	are continuously Fr\`echet differentiable.
	This shows the continuous Fr\`echet differentiability of $\mathop{\rm div}( m(\varphi_{i}) \nabla \mu _{i+1} )$.
	The Fr\`echet derivative of $g$ in $(\varphi ,\mu , v , u )$ applied to $(\varphi ^\delta ,\mu ^\delta , v ^\delta , u ^\delta )\in X $ is given by
	\begin{align*}
		&	g'(\varphi ,\mu , v , u )(\varphi ^\delta ,\mu ^\delta , v ^\delta , u ^\delta )													\\
		&	=\left(\begin{array}{l}
						\hspace{-0.1cm}\left(\begin{array}{l} 	 \frac1 \tau ( \varphi ^\delta _{i+1} - \varphi ^\delta _{i} ) - \mathop{\rm div}( m'(\varphi_{i}) \varphi ^\delta _{i} \nabla \mu _{i+1} ) - \mathop{\rm div}( m(\varphi_{i}) \nabla \mu ^\delta _{i+1} )					\\[5pt]\hskip6mm 
								+ v _{i+1} \cdot\nabla \varphi ^\delta _{i} + v ^\delta _{i+1} \cdot\nabla \varphi _{i} 									\end{array}\right)_{i=-1}^{M-2}	\\[5pt]
						\hspace{-0.1cm}\left(\begin{array}{l} 	- \mu ^\delta _{i+1} - \Delta \varphi ^\delta _{i+1} + \Psi_0'' ( \varphi _{i+1} ; \varphi ^\delta _{i+1}) - \kappa \varphi ^\delta _{i}							\end{array}\right)_{i=-1}^{M-2}	\\[5pt]
						\hspace{-0.1cm}\left(\begin{array}{l} 	\frac1 \tau ( \rho' (\varphi_{i}) \varphi ^\delta _{i} v _{i+1} - \rho' (\varphi_{i-1}) \varphi ^\delta _{i-1} v _{i} ) + \frac1 \tau ( \rho (\varphi_{i}) v ^\delta _{i+1} - \rho (\varphi_{i-1}) v ^\delta _{i} )											\\[5pt]\hskip6mm 
									+	\mathop{\rm div}( v _{i+1} \otimes (	 \rho' (\varphi_{i-1}) \varphi ^\delta _{i-1} v _{i} + \rho (\varphi_{i-1}) v ^\delta _{i} 			))					\\[5pt]\hskip6mm 
									-	\mathop{\rm div}( v _{i+1} \otimes (	 \frac{\rho_2-\rho_1}{2}m'(\varphi_{i-1}) \varphi ^\delta _{i-1} \nabla \mu _{i} -\frac{\rho_2-\rho_1}{2}m(\varphi_{i-1}) \nabla \mu ^\delta _{i} ))					\\[5pt]\hskip6mm 
								+ \mathop{\rm div}( v ^\delta _{i+1} \otimes ( \rho (\varphi_{i-1}) v _{i}  -\frac{\rho_2-\rho_1}{2}m(\varphi_{i-1}) \nabla \mu _{i} ))									\\[5pt]\hskip6mm 
								- \mathop{\rm div}(2 \eta' (\varphi_{i}) \varphi ^\delta _{i} \epsilon( v _{i+1} ) )
								- \mathop{\rm div}(2 \eta (\varphi_{i}) \epsilon( v ^\delta _{i+1} ) )																\\[5pt]\hskip6mm 
								- \mu _{i+1} \nabla \varphi ^\delta _{i} - \mu ^\delta _{i+1} \nabla \varphi _{i} 
								- u ^\delta _{i+1} 																		\end{array}\right)_{i=0}^{M-2}	\nonumber 
					\hspace{-0.1cm}\end{array}\right).
	\end{align*}
	Due to our convention for $\varphi _{-1}$ and $ v _{0}$,
		we require that $\varphi ^\delta _{-1}=0$ and $ v ^\delta _{0}=0$.
	For the application of a result due to
		Zowe and Kurcyusz~\cite{Zowe1979} concerning the existence of Lagrange multipliers, we show that $g'(\overline {z})$ maps $\mathbb{R}_+(C-\overline {z})\subset X $ onto $ Y $.
	For this purpose, let $(\Theta^c_{i},\Theta^w_{i},\Theta^v_{i})\in Y $ be arbitrarily fixed.
	We have to show that there exists a tuple $(\varphi ^\delta ,\mu ^\delta , v ^\delta , u ^\delta )\in\mathbb{R}_+(C-\overline {z})$ such that
	\begin{align}
				 \frac1 \tau ( \varphi ^\delta _{i+1} - \varphi ^\delta _{i} ) - \mathop{\rm div}( m'(\varphi_{i}) \varphi ^\delta _{i} \nabla \mu _{i+1} ) - \mathop{\rm div}( m(\varphi_{i}) \nabla \mu ^\delta _{i+1} )							\nonumber \\[3pt]
								+ v _{i+1} \cdot\nabla \varphi ^\delta _{i} + v ^\delta _{i+1} \cdot\nabla \varphi _{i} 					&=	 \Theta^w_{i}, 	\label{T:Mult.a1}\\[3pt]
				- \mu ^\delta _{i+1} - \Delta \varphi ^\delta _{i+1} - \kappa \varphi ^\delta _{i} + \Psi_0'' ( \varphi _{i+1} ; \varphi ^\delta _{i+1} )						&=	 \Theta^c_{i}, 	\label{T:Mult.a2}\\[3pt]
				\frac1 \tau ( \rho' (\varphi_{i}) \varphi ^\delta _{i} v _{i+1} - \rho' (\varphi_{i-1}) \varphi ^\delta _{i-1} v _{i} ) + \frac1 \tau ( \rho (\varphi_{i}) v ^\delta _{i+1} - \rho (\varphi_{i-1}) v ^\delta _{i} )													\nonumber \\[3pt]
								+	\mathop{\rm div}( v _{i+1} \otimes (	 \rho' (\varphi_{i-1}) \varphi ^\delta _{i-1} v _{i} + \rho (\varphi_{i-1}) v ^\delta _{i} 			))					\nonumber \\[3pt]
								-	\mathop{\rm div}( v _{i+1} \otimes (	 \frac{\rho_2-\rho_1}{2}m'(\varphi_{i-1}) \varphi ^\delta _{i-1} \nabla \mu _{i}  -\frac{\rho_2-\rho_1}{2}m(\varphi_{i-1}) \nabla \mu ^\delta _{i} ))					\nonumber \\[3pt]
								+ \mathop{\rm div}( v ^\delta _{i+1} \otimes ( \rho (\varphi_{i-1}) v _{i} -\frac{\rho_2-\rho_1}{2}m(\varphi_{i-1}) \nabla \mu _{i} ))								\nonumber \\[3pt]
								- \mathop{\rm div}(2 \eta' (\varphi_{i}) \varphi ^\delta _{i} \epsilon( v _{i+1} ) )
								- \mathop{\rm div}(2 \eta (\varphi_{i}) \epsilon( v ^\delta _{i+1} ) )															\nonumber \\[3pt]
								- \mu _{i+1} \nabla \varphi ^\delta _{i} - \mu ^\delta _{i+1} \nabla \varphi _{i} 
								- u ^\delta _{i+1} 														&=	 \Theta^v_{i} ,	\label{T:Mult.a3}
	\end{align}
	where (\ref{T:Mult.a1}) and (\ref{T:Mult.a2}) hold for $i=-1,...,{M-2}$ and (\ref{T:Mult.a3}) for all $i=0,...,{M-1}$.
	As in~Theorem~\ref{Exist!}, standard arguments show the existence of 
		 $(\varphi ^\delta _{0} ,\mu ^\delta _{0} )\in \overline{H}^2_{\partial_n }(\Omega )\times \overline{H}^2_{\partial_n }(\Omega )$ such that~(\ref{T:Mult.a1}) and~(\ref{T:Mult.a2}) are fulfilled for $i=-1$.
	Now we apply induction over $i$.
	Therefore, let us assume that (\ref{T:Mult.a1})--(\ref{T:Mult.a3}) hold for $i<M-1$.
	In order to show the existence of a solution to this system for $i+1$, we note that it can be written as
	\begin{align*} 
				 \frac1 \tau ( \varphi ^\delta _{i+2} - \varphi ^\delta _{i+1} ) - \mathop{\rm div}( m(\varphi_{i+1}) \nabla \mu ^\delta _{i+2} ) + v ^\delta _{i+2} \cdot\nabla \varphi _{i+1} 						&=	\Theta_\mu, 		\\[3pt]
				- \mu ^\delta _{i+2} - \Delta \varphi ^\delta _{i+2} - \kappa \varphi ^\delta _{i+1} + \Psi_0'' ( \varphi _{i+2} ; \varphi ^\delta _{i+2} )						&=	\Theta_\varphi, 		\\[3pt]
				\frac1 \tau ( \rho (\varphi_{i+1}) v ^\delta _{i+2} - \rho (\varphi_{i}) v ^\delta _{i+1} )
								+ \mathop{\rm div}( v ^\delta _{i+2} \otimes ( \rho (\varphi_{i}) v _{i+1}  -\frac{\rho_2-\rho_1}{2}m(\varphi_{i}) \nabla \mu _{i+1} ))								\nonumber \\[3pt]
								- \mathop{\rm div}(2 \eta (\varphi_{i+1}) \epsilon( v ^\delta _{i+2} ) )
								- \mu ^\delta _{i+2} \nabla \varphi _{i+1} 
								- u ^\delta _{i+2} 														&=	\Theta_v ,
	\end{align*}
		for a triple $( \Theta_\varphi , \Theta_\mu , \Theta_v )\in( \overline{L}^2(\Omega ) )^*\times ( \overline{L}^2(\Omega ) )^*\times H^1_{0,\sigma}(\Omega ;\mathbb{R}^N ) ^* $ that only depends on $(\varphi ,\mu , v )$, on $\varphi ^\delta _{i} ,\mu ^\delta _{i} $ and $ v ^\delta _{i} $ for $i<M-1$
		and on $ (\Theta^c_{i+1},\Theta^w_{i+1},\Theta^v_{i+1}) $.
	But now the existence of a solution follows readily from Theorem~\ref{Exist!} and from Lemma~\ref{regsol}
		when choosing
 		 $\nu= \rho (\varphi_{i}) v _{i+1}  -\frac{\rho_2-\rho_1}{2}m(\varphi_{i}) \nabla \mu _{i+1} $ as well as $f_0=\rho (\varphi_{i+1}),\> f_{-1}= \rho (\varphi_{i}) $ and $ u ^\delta _{i+2} =0$.
	Notice, here the functions $\rho (\varphi_{i+1}), m(\varphi_{i+1}),\eta (\varphi_{i+1})$ do not depend on the unknown $ \varphi ^\delta _{i+2} $.
	Further observe that we can always find a convex, affine functional $\psi: \overline{H}^2_{\partial_n }(\Omega )\mapsto \mathbb{R}$ with
		$(D\psi)z = \Psi_0'' ( \varphi _{i+2};z )$ for all $z\in \overline{H}^2_{\partial_n }(\Omega )$.
	Hence we deduce the existence of a Lagrange multiplier $( p,r,q )\in Y ^*$ such that
	\begin{align}
		\mathcal{J}'(\bar\varphi ,\bar\mu ,\bar v ,\bar u )(\varphi ^\delta ,\mu ^\delta , v ^\delta , u ^\delta )
			& =		\langle g'(\bar\varphi ,\bar\mu ,\bar v ,\bar u )(\varphi ^\delta ,\mu ^\delta , v ^\delta , u ^\delta ) , ( p,r,q ) \rangle		\nonumber \\
			& =		\langle g'(\bar\varphi ,\bar\mu ,\bar v ,\bar u )^*( p,r,q ) , (\varphi ^\delta ,\mu ^\delta , v ^\delta , u ^\delta ) \rangle		\label{T:Mult.c1}
	\end{align}
	for all $(\varphi ^\delta ,\mu ^\delta , v ^\delta , u ^\delta )\in \overline{H}^2_{\partial_n }(\Omega )^ M \times \overline{H}^2_{\partial_n }(\Omega )^ M \times H^1_{0,\sigma}(\Omega ;\mathbb{R}^N )^{M-1} \times \mathbb{R}_+( U _{ad}-\bar u )$.
	In order to derive the desired system for $( p,r,q )$ from this variational equation, the adjoint of $g'(\bar\varphi ,\bar\mu ,\bar v ,\bar u )$
		has to be calculated.
	Exemplarily, we show this calculation for two terms. 
	First, consider the term $\mathop{\rm div}( v _{i+1} \otimes (	 \rho' (\varphi_{i-1}) \varphi ^\delta _{i-1} v _{i} ))$ which gets tested by $ q _{i} $.
	Notice that for vector fields $ z^{(1)} , z^{(2)} , z^{(3)} $ in $H^1(\Omega ;\mathbb{R}^N )$ and with $ z^{(2)} |_{\partial \Omega }=0$ we have
	\begin{align}
			\int_\Omega z^{(3)} \cdot\mathop{\rm div}( z^{(2)} \otimes z^{(1)} )
			&	=	- \int_\Omega 					z^{(2)} \cdot(D z^{(3)} ) z^{(1)} ,		\label{T:Mult.x1}
	\end{align}
	by Gau\ss' theorem.
	Hence we get
	\begin{align*}
			\langle \mathop{\rm div}( v _{i+1} \otimes \rho' (\varphi_{i-1}) \varphi ^\delta _{i-1} v _{i} ) , q _{i} \rangle
			&=	- \int_\Omega 	 v _{i+1} \cdot (D q _{i} )( \rho' (\varphi_{i-1}) \varphi ^\delta _{i-1} v _{i} )dx	\\
			&=	- \int_\Omega 	 \rho' (\varphi_{i-1}) \varphi ^\delta _{i-1} v _{i} \cdot (D q _{i} )^\top v _{i+1}dx .
	\end{align*}
	Secondly, the term $\mathop{\rm div}( v _{i+1} \otimes -\frac{\rho_2-\rho_1}{2}m(\varphi_{i-1}) \nabla \mu ^\delta _{i} )$ gets tested by $ q _{i} $.
	This yields
	\begin{align*}
			\langle \mathop{\rm div}( v _{i+1} \otimes -\frac{\rho_2-\rho_1}{2}m(\varphi_{i-1}) \nabla \mu ^\delta _{i} ) , q _{i} \rangle
			&=	 \int_\Omega 	 v _{i+1} \cdot (D q _{i} )( \frac{\rho_2-\rho_1}{2}m(\varphi_{i-1}) \nabla \mu ^\delta _{i} )dx		\\
			&=	 \int_\Omega 	 \frac{\rho_2-\rho_1}{2}m(\varphi_{i-1}) \nabla \mu ^\delta _{i} \cdot (D q _{i} )^\top v _{i+1}dx 		\\
			&=	\int_\Omega 		\mu ^\delta _{i} \mathop{\rm div}( -\frac{\rho_2-\rho_1}{2}m(\varphi_{i-1}) (D q _{i} )^\top v _{i+1} )dx
	\end{align*}
	since $ v _{i+1} |_{\partial \Omega }=0$.
	The other terms can be treated similarly.
	After collecting all terms which contain $\varphi ^\delta _{i} $, $\mu ^\delta _{i} $ and $ v ^\delta _{i} $, respectively, it follows that
	\begin{align*}	&	g'(\bar\varphi ,\bar\mu ,\bar v ,\bar u )^*( p,r,q ) 						\\
			&	=\left(\begin{array}{l}
						\left(\begin{array}{l} 
							- \frac1 \tau ( p _{i} - p _{i-1} ) + a ( m'(\varphi_{i}), \mu _{i+1} , p _{i} ) - \mathop{\rm div}( p _{i} v _{i+1} ) - \Delta^t r _{i-1} 				\\[3pt]\hskip6mm 
								+ \Psi_0'' (\varphi _{i} )^* r _{i-1} 
								- \kappa r _{i+1} - \rho' (\varphi_{i}) v _{i+1} \cdot \frac1 \tau ( q _{i+1} - q _{i} ) 		 									\\[3pt]\hskip6mm 
								- (\rho' (\varphi_{i}) v _{i+1}  -\frac{\rho_2-\rho_1}{2}m'(\varphi_{i})\nabla \mu _{i+1} ) (D q _{i+1} )^\top v _{i+2} 									\\[3pt]\hskip6mm 
								+ 2\eta' (\varphi_{i}) \epsilon( v _{i+1} ) : D q _{i} + \mathop{\rm div}( \mu _{i+1} q _{i} )									\end{array}\right)_{i={0}}^{{M-1}}\\[3pt]
						\vrule width0pt height4.5ex
						\left(\begin{array}{l} 
							- r _{i-1} + b ( m(\varphi_{i-1}) , p _{i-1} )													
								- \mathop{\rm div}( \frac{\rho_2-\rho_1}{2}m(\varphi_{i-1}) (D q _{i} )^\top v _{i+1} )												\\[3pt]\hskip6mm 
								- q _{i-1} \cdot\nabla \varphi _{i-1} 															\end{array}\right)_{i={1}}^{{M-1}}\\[3pt]
						\left(\begin{array}{l} 	- \rho (\varphi_{i-1}) \frac1 \tau ( q _{i} - q _{i-1} ) - \rho (\varphi_{i-1}) (D q _{i} )^\top v _{i+1} 											\\[3pt]\hskip6mm 
								- (D q _{i-1} )( \rho (\varphi_{i-2}) v _{i-1}  -\frac{\rho_2-\rho_1}{2}m(\varphi_{i-2}) \nabla \mu _{i-1} )									\\[3pt]\hskip6mm 
								- \mathop{\rm div}(2 \eta (\varphi_{i-1}) \epsilon( q _{i-1} ) ) + p _{i-1} \nabla \varphi _{i-1} 											\end{array}\right)_{i={1}}^{{M-1}}\\[3pt]
						\left(\begin{array}{l} 	-  q _{i-1} 																			\end{array}\right)_{i={1}}^{{M-1}}
				\end{array}\right).
	\end{align*}
	Plugging this into~(\ref{T:Mult.c1}) and using the fact that $(\varphi ^\delta ,\mu ^\delta , v ^\delta , u ^\delta )$ can be chosen arbitrarily in
		$ \overline{H}^2_{\partial_n }(\Omega )^ M \times \overline{H}^2_{\partial_n }(\Omega )^ M \times H^1_{0,\sigma}(\Omega ;\mathbb{R}^N )^{M-1} \times \mathbb{R}_+( U _{ad}-\bar u )$, we obtain the desired system for $( p,r,q )$.
\end{proof}

The preceding theorem states first-order optimality conditions for problem~($P_\Psi$) in the case of smooth double-well type potentials.
In the following, we derive stationarity conditions for a nonsmooth potential via a limit process; compare section 7.
For this purpose,
the boundedness of the adjoint states is crucial.
In order to guarantee this, further regularity conditions on $\mathcal{J}$ are required.

\begin{lemma}\label{L:Regul}
	Suppose that the assumptions of Theorem~\ref{T:Mult} are fulfilled.
	Then $( p , r )\in \overline{H}^1(\Omega )^ M \times \overline{H}^1(\Omega )^{M-1}$ and it holds that
	\begin{align*}
			a ( m'(\varphi_{i}), \mu _{i+1} , p _{i} )	&\>=\>		 m'(\varphi_{i})\nabla \mu _{i+1} \cdot\nabla p _{i} 	&\>\in\>	 \overline{H}^1(\Omega )^*,	\\
			b ( m(\varphi_{i-1}) , p _{i-1} )	&\>=\>		-\mathop{\rm div}( m(\varphi_{i-1}) \nabla p _{i-1} )		&\>\in\>	 \overline{H}^1(\Omega )^*,	\\
			-\Delta^t r _{i-1} 			&\>=\>		-\Delta r _{i-1} 				&\>\in\>	 \overline{H}^1(\Omega )^*.
	\end{align*}
\end{lemma}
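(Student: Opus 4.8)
The plan is a bootstrapping argument: Theorem~\ref{T:Mult} only provides $p_i,r_i\in\overline{L}^2(\Omega)$, and I will use the adjoint Cahn--Hilliard equations~(\ref{T:Mult.1})--(\ref{T:Mult.2}) as elliptic equations to raise this to $\overline{H}^1(\Omega)$, treating $p$ first and then $r$; the three claimed identities then follow by integration by parts. The analytic device is the following elementary regularity principle. Let $A$ denote either $-\mathop{\rm div}(m(\varphi_{i-1})\nabla\cdot)$ or $-\Delta$. By \cite{Maugeri2000} (cf.\ also Lemma~\ref{regsol}), $A$ is a bounded bijection from $\overline{H}^2_{\partial_n}(\Omega)$ onto $\overline{L}^2(\Omega)$ (injectivity because its kernel is spanned by constants, which are excluded), while Lax--Milgram --- using $m\ge b_1>0$ from Assumption~\ref{assum1} and the Poincar\'e--Wirtinger inequality on $\overline{H}^1(\Omega)$ --- shows that $A$ is a bounded bijection from $\overline{H}^1(\Omega)$ onto $\overline{H}^{-1}(\Omega)$. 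The operators $b(m(\varphi_{i-1}),\cdot)$ and $-\Delta^t$ appearing in~(\ref{T:Mult.2}),~(\ref{T:Mult.1}) are by definition exactly the Banach-space adjoints $A^*$; hence $A^*\colon\overline{L}^2(\Omega)\to\overline{H}^2_{\partial_n}(\Omega)^*$ is a bijection, and for $w\in\overline{H}^1(\Omega)$ an integration by parts (using $\partial_n\hat z=0$ for the test functions $\hat z\in\overline{H}^2_{\partial_n}(\Omega)$) shows $A^*w=Aw\in\overline{H}^{-1}(\Omega)$, where $A\colon\overline{H}^1(\Omega)\to\overline{H}^{-1}(\Omega)$ is onto. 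Consequently, whenever $w\in\overline{L}^2(\Omega)$ satisfies $A^*w\in\overline{H}^{-1}(\Omega)$, one already has $w\in\overline{H}^1(\Omega)$ with $A^*w=Aw$.

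First I would establish $p\in\overline{H}^1(\Omega)^M$. For fixed $i$, rearranging~(\ref{T:Mult.2}) gives $b(m(\varphi_{i-1}),p_{i-1})=F_{i-1}$ with $F_{i-1}:=r_{i-1}+\tfrac{\partial\mathcal{J}}{\partial\mu_i}(\overline{z})+\mathop{\rm div}\bigl(\tfrac{\rho_2-\rho_1}{2}m(\varphi_{i-1})(Dq_i)^\top v_{i+1}\bigr)+q_{i-1}\cdot\nabla\varphi_{i-1}$, and I claim $F_{i-1}\in\overline{H}^{-1}(\Omega)$. Indeed $r_{i-1}\in\overline{L}^2(\Omega)$ by Theorem~\ref{T:Mult}; $\tfrac{\partial\mathcal{J}}{\partial\mu_i}(\overline{z})\in\overline{H}^{-1}(\Omega)$ because $\mathcal{J}$ is Fr\'echet differentiable on a space carrying the $\overline{H}^1$-topology in its $\mu$-components; $m(\varphi_{i-1})(Dq_i)^\top v_{i+1}\in L^2(\Omega;\mathbb{R}^N)$ since $m$ is bounded, $q_i\in H^1_{0,\sigma}(\Omega;\mathbb{R}^N)$ and $v_{i+1}\in H^2(\Omega;\mathbb{R}^N)\hookrightarrow L^\infty$ by Lemma~\ref{energy2}, so its divergence lies in $\overline{H}^{-1}(\Omega)$; and $q_{i-1}\cdot\nabla\varphi_{i-1}\in L^3(\Omega)\subset L^2(\Omega)$ by the embedding $H^1\hookrightarrow L^6$. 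By the regularity principle, $p_{i-1}\in\overline{H}^1(\Omega)$, and since $i$ was arbitrary, $p\in\overline{H}^1(\Omega)^M$.

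Now that $p\in\overline{H}^1(\Omega)^M$, equation~(\ref{T:Mult.1}), rearranged as $-\Delta^t r_{i-1}=G_{i-1}$, has a right-hand side in $\overline{H}^{-1}(\Omega)$: $\tfrac{\partial\mathcal{J}}{\partial\varphi_i}(\overline{z})\in\overline{H}^{-1}(\Omega)$; $p_i-p_{i-1}\in\overline{H}^1(\Omega)$; the functional $a(m'(\varphi_i),\mu_{i+1},p_i)$ equals $m'(\varphi_i)\nabla\mu_{i+1}\cdot\nabla p_i\in L^{3/2}(\Omega)\hookrightarrow\overline{H}^{-1}(\Omega)$ once $p_i\in\overline{H}^1(\Omega)$ (by the integration by parts carried out below, using $m'$ bounded, $\nabla\mu_{i+1}\in H^1\hookrightarrow L^6$, $\nabla p_i\in L^2$); $p_i v_{i+1}\in L^2(\Omega;\mathbb{R}^N)$, so $\mathop{\rm div}(p_i v_{i+1})\in\overline{H}^{-1}(\Omega)$; $\kappa r_{i+1}\in\overline{L}^2(\Omega)$; the remaining $q$-dependent terms lie in $\overline{H}^{-1}(\Omega)$ by the same Sobolev embeddings together with $q\in H^1_{0,\sigma}(\Omega;\mathbb{R}^N)^{M-1}$ and the boundedness of $\rho',\eta',m'$; and $\Psi_0''(\varphi_i)^*r_{i-1}\in L^2(\Omega)$ because for the double-well potentials and their Moreau--Yosida regularizations considered here (cf.\ Definition~\ref{defpotapp}) $\Psi_0''(\varphi_i)$ is multiplication by $\psi_0''(\varphi_i)\in L^\infty(\Omega)$, using $\varphi_i\in\overline{H}^2_{\partial_n}(\Omega)\hookrightarrow C(\overline{\Omega})$. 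Hence $G_{i-1}\in\overline{H}^{-1}(\Omega)$ and the regularity principle (with $A=-\Delta$, $A^*=-\Delta^t$) yields $r_{i-1}\in\overline{H}^1(\Omega)$. Finally, with $p_i,r_{i-1}\in\overline{H}^1(\Omega)$ and $\varphi_i,\mu_{i+1}\in\overline{H}^2_{\partial_n}(\Omega)$ (Lemma~\ref{energy2}), the three asserted identities follow by integrating by parts in the definitions of $a$, $b$ and $\Delta^t$: for $\hat z\in\overline{H}^2_{\partial_n}(\Omega)$ one has $\langle a(m'(\varphi_i),\mu_{i+1},p_i),\hat z\rangle=-\int_\Omega p_i\mathop{\rm div}(m'(\varphi_i)\hat z\nabla\mu_{i+1})\,dx=\int_\Omega m'(\varphi_i)\nabla\mu_{i+1}\cdot\nabla p_i\,\hat z\,dx$ (the boundary term vanishing since $\partial_n\mu_{i+1}=0$), $\langle b(m(\varphi_{i-1}),p_{i-1}),\hat z\rangle=\int_\Omega m(\varphi_{i-1})\nabla p_{i-1}\cdot\nabla\hat z\,dx=\langle-\mathop{\rm div}(m(\varphi_{i-1})\nabla p_{i-1}),\hat z\rangle$ (using $\partial_n\hat z=0$), and $\langle-\Delta^t r_{i-1},\hat z\rangle=-\int_\Omega r_{i-1}\Delta\hat z\,dx=\int_\Omega\nabla r_{i-1}\cdot\nabla\hat z\,dx=\langle-\Delta r_{i-1},\hat z\rangle$; as $m'(\varphi_i)\nabla\mu_{i+1}\cdot\nabla p_i\in L^{3/2}(\Omega)$, $m(\varphi_{i-1})\nabla p_{i-1}\in L^2(\Omega;\mathbb{R}^N)$ and $\nabla r_{i-1}\in L^2(\Omega;\mathbb{R}^N)$, these are genuine elements of $\overline{H}^{-1}(\Omega)=\overline{H}^1(\Omega)^*$.

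I expect the main obstacle to be the justification of the regularity principle --- the equivalence of the "very weak" (transposition) formulation tested against $\overline{H}^2_{\partial_n}(\Omega)$ with the weak $\overline{H}^1(\Omega)$-formulation --- where one must check in particular that the test space $\overline{H}^2_{\partial_n}(\Omega)$ encodes exactly the homogeneous Neumann condition, so that the relevant adjoint elliptic operator is the one named above. Closely tied to this is the bookkeeping for the term $\Psi_0''(\varphi_i)^*r_{i-1}$: the abstract hypothesis of Theorem~\ref{T:Mult} only guarantees $\Psi_0''(\varphi_i)\in\mathcal{L}(\overline{H}^2_{\partial_n}(\Omega),L^2(\Omega))$, hence a priori $\Psi_0''(\varphi_i)^*r_{i-1}\in\overline{H}^2_{\partial_n}(\Omega)^*$ only, and it is the concrete superposition structure of the admissible potentials that makes $\Psi_0''(\varphi_i)$ act boundedly on $L^2(\Omega)$ and thereby closes the argument for $r$.
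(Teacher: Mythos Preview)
Your argument is correct and rests on the same elliptic regularity mechanism as the paper's proof, but the organization differs. The paper proceeds by \emph{backward induction} on~$i$: starting from $p_{M-1}=r_{M-1}=0$, it assumes $p_i,r_i\in\overline{H}^1(\Omega)$ and shows first that $a(m'(\varphi_i),\mu_{i+1},p_i)\in\overline{H}^1(\Omega)^*$ (which needs $p_i\in\overline{H}^1$ from the induction hypothesis), then that~(\ref{T:Mult.1}) and~(\ref{T:Mult.2}) force $\Delta^t r_{i-1}$ and $b(m(\varphi_{i-1}),p_{i-1})$ into $\overline{H}^1(\Omega)^*$, whence $r_{i-1},p_{i-1}\in\overline{H}^1(\Omega)$ by the same transposition argument you spell out. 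You instead \emph{decouple}: since~(\ref{T:Mult.2}) involves $r_{i-1}$ only through its $L^2$-value, you obtain all $p_{i-1}\in\overline{H}^1(\Omega)$ in one stroke, and only then turn to~(\ref{T:Mult.1}) for $r$. Your route is marginally cleaner because it makes visible that no induction is needed for $p$; the paper's route is more in keeping with the time-discrete backward structure of the adjoint system. Both approaches must, at the step for $r_{i-1}$, place $\Psi_0''(\varphi_i)^*r_{i-1}$ into $\overline{H}^1(\Omega)^*$ while knowing only $r_{i-1}\in L^2(\Omega)$; the paper leaves this implicit (``and the assumption''), whereas you correctly identify that it relies on the superposition structure of the admissible potentials rather than on the abstract hypothesis of Theorem~\ref{T:Mult} alone.
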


\begin{proof}
	We prove the claim by backward induction over $i$.
	For $i={M-1}$ we have $ p _{M-1}= r _{M-1}=0$ by convention.
	Now, we take the induction step from $i$ to $i-1$ assuming that $ p _{i} , r _{i} \in \overline{H}^1(\Omega )$.
	This higher regularity implies for $\hat z \in \overline{H}^2_{\partial_n }(\Omega )$ that
	\begin{align*}
			\langle a ( m'(\varphi_{i}), \mu _{i+1} , p _{i} ) , \hat z \rangle
			&\>=\>		-\int_\Omega 	 p _{i} \mathop{\rm div}( m'(\varphi_{i})\hat z \nabla \mu _{i+1} )dx		\\
			&\>=\>		\int_\Omega 		 m'(\varphi_{i})\hat z \nabla \mu _{i+1} \cdot\nabla p _{i}dx 		\\
			&\>\le\>				C || m'(\varphi_{i})||_{L^\infty } ||\nabla \mu _{i+1} ||_{L^4} ||\nabla p _{i} ||_{L^2} ||\hat z ||_{L^4}	\\
			&\>\le\>				C || m'(\varphi_{i})||_{L^\infty } || \mu _{i+1} ||_{H^2} || p _{i} ||_{H^1} ||\hat z ||_{H^1}
	\end{align*}
		because of $\nabla \mu _{i+1} \cdot\vec n =0$ on $\partial \Omega $.
	Consequently, $ a ( m'(\varphi_{i}), \mu _{i+1} , p _{i} ) \in \overline{H}^1(\Omega )^*$.
	Equations~(\ref{T:Mult.1}) and~(\ref{T:Mult.2}) and the assumption yield that
			$\Delta^t r _{i-1} ,\>	b ( m(\varphi_{i-1}) , p _{i-1} )		\>\in\>		 \overline{H}^1(\Omega )^*$.
	By standard regularity arguments one shows that $ r _{i-1} $ and $ p _{i-1} $ are indeed elements of $ \overline{H}^1(\Omega )$
		and the desired relations for $ b ( m(\varphi_{i-1}) , p _{i-1} )$ and $\Delta^t r _{i-1} $ follow at once.
\end{proof}

The next lemma is used in the subsequent theorem in order to prove the boundedness of the adjoint state.

\def\xa#1#2#3{#1\hbox to 1cm{\hss $#2$\hss}& #3}

\begin{lemma}\label{L:Beschr}
	Let $\alpha >0$ be given and $ M_1 $ and $ M_2 $ be bounded subsets of $ \overline{H}^1(\Omega )^*$ and $ H^1_{0,\sigma}(\Omega ;\mathbb{R}^N )^*$, respectively.
	Let ${\cal M}$ be the set of all tuples 
		$(\hat p ,\hat r ,\hat q ;$\hskip0pt $\hat A;$\hskip0pt $ h_ p , h_ r , h_ q ;$\hskip0pt $\hat c ,\hat u ;$\hskip0pt $\hat m ,\hat\eta ,\hat\rho )$ with
	\begin{align*}
			\xa{	(\hat p ,\hat r ,\hat q )			}{	\in	}{	 \overline{H}^1(\Omega )\times \overline{H}^1(\Omega )\times H^1_{0,\sigma}(\Omega ;\mathbb{R}^N ),								}\\
			\xa{	\hat A				}{	\in	}{	{\cal L}( \overline{H}^1(\Omega ); \overline{H}^1(\Omega )^*) \mbox{ be monotone},			}\\
			\xa{	( h_ r , h_ p , h_ q )	}{	\in	}{	M_1 \times M_1 \times M_2 ,							}\\
			\xa{	(\hat c ,\hat u )			}{	\in	}{	 \overline{H}^1(\Omega )\times H^1(\Omega ;\mathbb{R}^N ),						}\\
			\xa{	\hat m ,\hat\eta ,\hat\rho 		}{	\in	}{	L^\infty (\Omega ) \mbox{ with $1/\alpha \ge\hat m ,\hat\eta \ge\alpha $ and $\hat\rho \ge0$ a.e. on $\Omega $},	}
	\end{align*}
		for which the following system is satisfied:
	\begin{align}
			\frac 1 \tau \hat p 	- \Delta \hat r + \hat A\hat r 											&	= h_ r ,		\label{L:Beschr.1}\\
			- \hat r 	 - \mathop{\rm div}(\hat m \nabla \hat p ) 		 - \hat q \cdot\nabla \hat c 	&	= h_ p ,		\label{L:Beschr.2}\\
			\frac 1 \tau \hat\rho \hat q 			- \mathop{\rm div}(2\hat\eta \epsilon(\hat q ) ) - (D \hat q ) \hat u + \hat p \nabla \hat c 	&	= h_ q ,		\label{L:Beschr.3}\\
			\frac 1 \tau \int_\Omega \hat\rho |\hat q |^2	- \langle (D \hat q ) \hat u ,\hat q \rangle		 							&	\ge 0.		\label{L:Beschr.4}
	\end{align}
	Then the set $\{(\hat p ,\hat r ,\hat q ) \>:\> (\hat p ,\hat r ,\hat q ;$\hskip0pt $\hat A;$\hskip0pt $ h_ p , h_ r , h_ q ;$\hskip0pt $\hat c ,\hat u ;$\hskip0pt $\hat m ,\hat\eta ,\hat\rho )\in{\cal M}\}$ is bounded in $ \overline{H}^1(\Omega )\times \overline{H}^1(\Omega )\times H^1_{0,\sigma}(\Omega ;\mathbb{R}^N )$.
\end{lemma}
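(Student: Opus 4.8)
The plan is to obtain a single energy-type estimate by testing (\ref{L:Beschr.1}), (\ref{L:Beschr.2}) and (\ref{L:Beschr.3}) with $\tau\hat r$, $\hat p$ and $\hat q$, respectively, and adding the three resulting scalar identities. Testing (\ref{L:Beschr.1}) with $\tau\hat r\in\overline{H}^1(\Omega)$ yields $(\hat p,\hat r)+\tau\|\nabla\hat r\|^2+\tau\langle\hat A\hat r,\hat r\rangle=\tau\langle h_r,\hat r\rangle$; testing (\ref{L:Beschr.2}) with $\hat p$ yields $-(\hat r,\hat p)+(\hat m\nabla\hat p,\nabla\hat p)-\int_\Omega(\hat q\cdot\nabla\hat c)\,\hat p\,dx=\langle h_p,\hat p\rangle$; and testing (\ref{L:Beschr.3}) with $\hat q$ yields $\tfrac1\tau\int_\Omega\hat\rho|\hat q|^2\,dx+2(\hat\eta\,\epsilon(\hat q),\epsilon(\hat q))-\langle(D\hat q)\hat u,\hat q\rangle+\int_\Omega(\hat p\,\nabla\hat c)\cdot\hat q\,dx=\langle h_q,\hat q\rangle$. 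Two cancellations are decisive once these are summed: $(\hat p,\hat r)-(\hat r,\hat p)=0$, and, since $(\hat p\,\nabla\hat c)\cdot\hat q=\hat p\,(\hat q\cdot\nabla\hat c)$ pointwise, the two cross terms containing $\hat c$ cancel exactly. In particular no control of $\hat c$ is needed, which is precisely why it is allowed to range freely over $\overline{H}^1(\Omega)$.

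After these cancellations the sum reads
\[
\tau\|\nabla\hat r\|^2+\tau\langle\hat A\hat r,\hat r\rangle+(\hat m\nabla\hat p,\nabla\hat p)+\Big(\tfrac1\tau\!\int_\Omega\hat\rho|\hat q|^2\,dx-\langle(D\hat q)\hat u,\hat q\rangle\Big)+2(\hat\eta\,\epsilon(\hat q),\epsilon(\hat q))=\tau\langle h_r,\hat r\rangle+\langle h_p,\hat p\rangle+\langle h_q,\hat q\rangle .
\]
Here I would drop the two nonnegative contributions on the left: $\langle\hat A\hat r,\hat r\rangle\ge0$ because a linear monotone operator is positive semidefinite, and $\tfrac1\tau\int_\Omega\hat\rho|\hat q|^2\,dx-\langle(D\hat q)\hat u,\hat q\rangle\ge0$ by (\ref{L:Beschr.4}). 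Using the a.e.\ lower bounds $\hat m,\hat\eta\ge\alpha$ one arrives at
\[
\tau\|\nabla\hat r\|^2+\alpha\|\nabla\hat p\|^2+2\alpha\|\epsilon(\hat q)\|^2\le\tau\|h_r\|_\ast\|\hat r\|_{H^1}+\|h_p\|_\ast\|\hat p\|_{H^1}+\|h_q\|_\ast\|\hat q\|_{H^1},
\]
where $\|\cdot\|_\ast$ denotes the respective dual norm. Poincar\'e's inequality for the mean-value-free functions $\hat r,\hat p$ and Korn's together with Poincar\'e's inequality for $\hat q\in H^1_{0,\sigma}(\Omega;\mathbb{R}^N)$ bound the left-hand side below by $c_0\big(\|\hat r\|_{H^1}^2+\|\hat p\|_{H^1}^2+\|\hat q\|_{H^1}^2\big)$ with $c_0>0$ depending only on $\tau,\alpha,\Omega$; since $M_1$ and $M_2$ are bounded, the right-hand side is at most $C\big(\|\hat r\|_{H^1}+\|\hat p\|_{H^1}+\|\hat q\|_{H^1}\big)$ with $C$ independent of the tuple. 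Writing $N$ for the square root of the sum of the three squared $H^1$-norms, this is $c_0N^2\le C'N$, hence $N\le C'/c_0$, which is the asserted uniform bound.

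I do not expect a genuine obstacle here; the one point deserving care is the well-definedness of the trilinear pairings $\langle(D\hat q)\hat u,\hat q\rangle$, $\int_\Omega(\hat q\cdot\nabla\hat c)\hat p\,dx$ and $\int_\Omega(\hat p\,\nabla\hat c)\cdot\hat q\,dx$ and the rigorous justification of their cancellation, which is handled exactly as for the analogous convective terms in the primal system, using $\overline{H}^1(\Omega)\hookrightarrow L^6(\Omega)$ and $L^{3/2}$--$L^3$ duality for $N\le3$ (and more comfortably for $N=2$). A further minor point is that the test functions $\tau\hat r,\hat p\in\overline{H}^1(\Omega)$ and $\hat q\in H^1_{0,\sigma}(\Omega;\mathbb{R}^N)$ are admissible in the spaces where (\ref{L:Beschr.1})--(\ref{L:Beschr.3}) are posed, so the testing procedure is legitimate and the identities above are exact.
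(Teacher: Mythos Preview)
Your proof is correct and follows essentially the same route as the paper: test (\ref{L:Beschr.1})--(\ref{L:Beschr.3}) with $\tau\hat r$, $\hat p$, $\hat q$, sum, exploit the two cancellations, drop the nonnegative terms coming from the monotonicity of $\hat A$ and from (\ref{L:Beschr.4}), and conclude via Poincar\'e and Korn. If anything, your write-up is more explicit about the cancellation of the cross terms involving $\hat c$, which the paper leaves implicit.
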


In order to keep the flow of the presentation, we defer the proof to the appendix.

\def\xa#1#2#3#4{\hbox to 1cm{\hss $#1$}\hbox to 1cm{\hss $#2$\hss}\hbox to 1cm{$#3$\hss}\hbox to 10cm{#4\hss}}
\def\xb#1#2#3#4{\hbox to 4.5cm{\hss $#1#2#3$\hss}\hbox to 8.5cm{#4\hss}}

Employing the preceding results, we finally perform the limit process with respect to the first-order optimality conditions of Theorem \ref{T:Mult}.

\begin{theorem}[Stationarity conditions]\label{T:MultOrg}
	Suppose that the following assumptions are satisfied.
	\begin{enumerate}
	 \item $\mathcal{J}'$ is a bounded mapping from $ \overline{H}^1(\Omega )^ M \times \overline{H}^1(\Omega )^ M \times H^1_{0,\sigma}(\Omega ;\mathbb{R}^N )^{M-1} \times U _{ad}$
	into the space $({ \overline{H}^1(\Omega )}^ M \times { \overline{H}^1(\Omega )}^ M \times { H^1_{0,\sigma}(\Omega ;\mathbb{R}^N )}^{M-1} \times L^2(\Omega;\mathbb{R}^N)^{M-1} )^*$
		and $\frac{\partial \mathcal{J}}{\partial u}$ satisfies the following weak lower-semicontinuity property
		\vspace{-0.1cm}
	\[
		\Big\langle \frac{\partial \mathcal{J}}{\partial u}(\hat z), \hat u \Big\rangle
		\>\le\>	\liminf_{n\rightarrow\infty}
				\Big\langle \frac{\partial \mathcal{J}}{\partial u}(\hat z^{(n)}), \hat u^{(n)} \Big\rangle
	\]
	\vspace{-0.1cm}
	for $\hat z^{(n)}=(\hat\varphi ^{(n)} , \hat\mu ^{(n)} , \hat v ^{(n)} , \hat u ^{(n)})$ converging weakly in 
		$\overline{H}^2_{\partial_n }(\Omega )^ M \times \overline{H}^2_{\partial_n }(\Omega )^ M \times H^1_{0,\sigma}(\Omega ;\mathbb{R}^N )^{M-1} \times U_{ad} $
		to $\hat z=(\hat\varphi , \hat\mu , \hat v , \hat u )$.
 %
	\item For every $n\in\mathbb{N}$
		let $ \Psi_0 ^{(n)} : \overline{H}^2_{\partial_n }(\Omega )\rightarrow \overline {\mathbb{R}}$ be a convex, lower-semicontinuous and proper functional satisfying the assumptions of Theorem~\ref{T:Mult}.
 %
	\item Let $( \varphi ^{(n)} , \mu ^{(n)} , v ^{(n)} , u ^{(n)} )\in \overline{H}^2_{\partial_n }(\Omega )^ M \times \overline{H}^2_{\partial_n }(\Omega )^ M \times H^1_{0,\sigma}(\Omega ;\mathbb{R}^N )^{M-1} \times U_{ad} $ be a minimizer for (P$_{ \Psi ^{(n)} }$)
		and let $( p ^{(n)} , r ^{(n)} , q ^{(n)} )\in{ \overline{H}^1(\Omega )}^ M \times { \overline{H}^1(\Omega )}^ M \times { H^1_{0,\sigma}(\Omega ;\mathbb{R}^N )}^{M-1} $ be given as in Theorem~\ref{T:Mult} and Lemma~\ref{L:Regul}.
	\end{enumerate}

	Then there exists an element $( \varphi , \mu , v , u , p , r , q )$ and a subsequence denoted by $\left\{( \varphi ^{(m)} , \mu ^{(m)} , v ^{(m)} , u ^{(m)} , p ^{(m)} , r ^{(m)} , q ^{(m)} )\right\}_{m\in\mathbb{N}}$
	 with
	 \vspace{-0.1cm}
	\begin{align*}
			 \varphi ^{(m)} 		{	\rightarrow 	}{	 \varphi 		}\text{	weakly in } \overline{H}^2_{\partial_n }(\Omega )^ M , 
			 &\ \mu ^{(m)} 		{	\rightarrow 	}{	 \mu 		}\text{	weakly in } \overline{H}^2_{\partial_n }(\Omega )^{M-1},\\
			 v ^{(m)} 		{	\rightarrow 	}{	 v 		}\text{	weakly in }H^2(\Omega ;\mathbb{R}^N )^{M-1},
			 &\ u ^{(m)} 		{	\rightarrow 	}{	 u 		}\text{	weakly in } L^2(\Omega;\mathbb{R}^N)^{M-1},									\\
			 p ^{(m)} 		{	\rightarrow 	}{	 p 		}\text{	weakly in } \overline{H}^1(\Omega )^ M ,								
			 &\ r ^{(m)} 		{	\rightarrow 	}{	 r 		}\text{	weakly in } \overline{H}^1(\Omega )^{M-1},									\\
			 q ^{(m)} 		{	\rightarrow 	}{	 q 		}\text{	weakly in } H^1_{0,\sigma}(\Omega ;\mathbb{R}^N )^{M-1},
		 &\ {\Psi_0  ^{(m)}}'' ( \varphi ^{(m)}_{i+1} )^* r ^{(n)}_{i} 	{	\rightarrow 	}{	 \lambda _{i} 		}\text{	weakly in } \overline{H}^1(\Omega )^*,									
	\end{align*}
	\vspace{-0.1cm}
	 for all $i=-1,...,{M-2}$
	such that for $ z =( \varphi , \mu , v , u )$ and $\tilde q_k:=q_{k-1}$ it holds that
	\begin{align}	
							- \frac1 \tau ( p _{i} - p _{i-1} ) + { m(\varphi_{i}) }'\nabla \mu _{i+1} \cdot p _{i} - \mathop{\rm div}( p _{i} v _{i+1} ) - \Delta r _{i-1} 			\nonumber \\»\hskip6mm 
								+ \lambda _{i-1} 
								- \kappa r _{i+1} - \frac 1 \tau { \rho (\varphi_{i}) }' v _{i+1} \cdot( q _{i+1} - q _{i} )							\nonumber \\»\hskip6mm 
								- ({ \rho (\varphi_{i}) }' v _{i+1} -\frac{\rho_2-\rho_1}{2}m'(\varphi_{i})\nabla \mu _{i+1} ) (D q _{i+1} )^\top v _{i+2} 								\nonumber \\»\hskip6mm 
								+ 2{ \eta (\varphi_{i}) }' \epsilon( v _{i+1} ) : D q _{i} + \mathop{\rm div}( \mu _{i+1} q _{i} )								 
					& \>=\>		\frac {\partial \mathcal{J}}{\partial \varphi _{i} }( z ),														\label{T:MultOrg.1}\\»
							- r _{i-1} - \mathop{\rm div}( m(\varphi_{i-1}) \nabla p _{i-1} )													
								- \mathop{\rm div}( \frac{\rho_2-\rho_1}{2}m(\varphi_{i-1}) (D q _{i} )^\top v _{i+1} )											\nonumber \\»\hskip6mm 
								- q _{i-1} \cdot\nabla \varphi _{i-1} 															 
					& \>=\>	\frac {\partial \mathcal{J}}{\partial \mu _{i} }( z ),															\label{T:MultOrg.2}\\»
 							- \frac 1 \tau \rho (\varphi_{j-1}) ( q _{j} - q _{j-1} ) - \rho (\varphi_{j-1}) (D q _{j} )^\top v _{j+1} 						\nonumber \\»\hskip6mm 
								- (D q _{j-1} )( \rho (\varphi_{j-2}) v _{j-1} -\frac{\rho_2-\rho_1}{2}m(\varphi_{j-2}) \nabla \mu _{j-1} )						\nonumber \\»\hskip6mm 
								- \mathop{\rm div}( 2 \eta (\varphi_{j-1}) \epsilon( q _{j-1} ) ) + p _{j-1} \nabla \varphi _{j-1} 							 
					& \>=\>	\frac {\partial \mathcal{J}}{\partial v _{j} }( z ),															\label{T:MultOrg.3}\\»
 %
							\frac {\partial \mathcal{J}}{\partial u}( z ) -  \tilde q
					& \>\in\>	\big[\mathbb{R}_+( U _{ad}- u )\big]^+\hspace{-0.05cm}.																		\label{T:MultOrg.4}
	\end{align}
\end{theorem}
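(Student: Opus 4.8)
The plan is to pass to the limit $n\to\infty$ in the first-order system of Theorem~\ref{T:Mult} written for the regularized problems (P$_{\Psi^{(n)}}$). First I would record the boundedness of the primal data: since either $U_{ad}$ is bounded or $\mathcal{J}$ is partially coercive with $\{\mathcal{J}(\varphi^{(n)},\mu^{(n)},v^{(n)},u^{(n)})\}_{n}$ bounded, the controls $\{u^{(n)}\}_{n}$ are bounded in $L^2(\Omega;\mathbb{R}^N)^{M-1}$, and Lemma~\ref{energy2} then furnishes a uniform bound for $\{(\varphi^{(n)},\mu^{(n)},v^{(n)})\}_{n}$ in $\overline{H}^2_{\partial_n}(\Omega)^{M}\times\overline{H}^2_{\partial_n}(\Omega)^{M}\times H^1_{0,\sigma}(\Omega;\mathbb{R}^N)^{M-1}$. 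Extracting a subsequence (indexed by $m$) yields the stated weak convergences of $\varphi^{(m)},\mu^{(m)},v^{(m)},u^{(m)}$; by the compact embeddings $\overline{H}^2_{\partial_n}(\Omega)\hookrightarrow\hookrightarrow W^{1,4}(\Omega)$ and $\overline{H}^1(\Omega)\hookrightarrow\hookrightarrow L^4(\Omega)$ -- which I will use repeatedly below -- one has in particular $\varphi^{(m)}\to\varphi$, $\mu^{(m)}\to\mu$, $v^{(m)}\to v$ strongly in $\overline{H}^1(\Omega)$ and in $W^{1,4}(\Omega)$-type spaces, so that $m(\varphi^{(m)}_i),m'(\varphi^{(m)}_i),\rho(\varphi^{(m)}_i),\eta(\varphi^{(m)}_i)$ etc.\ converge strongly in $L^\infty(\Omega)$ (Lipschitz continuity from Assumption~\ref{assum1}).

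The decisive step -- and the one I expect to be the main obstacle -- is the \emph{$n$-uniform} boundedness of the adjoint states $(p^{(n)},r^{(n)},q^{(n)})$ in $\overline{H}^1(\Omega)^{M}\times\overline{H}^1(\Omega)^{M-1}\times H^1_{0,\sigma}(\Omega;\mathbb{R}^N)^{M-1}$, since $({\Psi_0^{(n)}})''$ is itself \emph{not} uniformly controlled. Here Lemma~\ref{L:Beschr} is tailor-made. Proceeding by backward induction over the time index, exactly as in the proof of Lemma~\ref{L:Regul} (recall $p^{(n)}_i=r^{(n)}_i=q^{(n)}_i=0$ for $i\ge M-1$), and assuming $p^{(n)}_i,r^{(n)}_i,q^{(n)}_i$ already bounded uniformly in $n$, one reads off from~(\ref{T:Mult.1})--(\ref{T:Mult.3}) and Lemma~\ref{L:Regul} a single-step system for the triple $(\hat p,\hat r,\hat q):=(p^{(n)}_{i-1},r^{(n)}_{i-1},q^{(n)}_{i-1})$ of precisely the form~(\ref{L:Beschr.1})--(\ref{L:Beschr.3}), with $\hat A:=({\Psi_0^{(n)}})''(\varphi^{(n)}_{i})^*$ (monotone because $\Psi_0^{(n)}$ is convex), $\hat m:=m(\varphi^{(n)}_{i-1})$, $\hat\eta:=\eta(\varphi^{(n)}_{i-1})$, $\hat\rho:=\rho(\varphi^{(n)}_{i-1})\ge0$, $\hat c:=\varphi^{(n)}_{i-1}$, $\hat u:=\rho(\varphi^{(n)}_{i-2})v^{(n)}_{i-1}-\tfrac{\rho_2-\rho_1}{2}m(\varphi^{(n)}_{i-2})\nabla\mu^{(n)}_{i-1}$, and right-hand sides $h_r,h_p,h_q$ assembled from $\tfrac{\partial\mathcal{J}}{\partial\varphi_i},\tfrac{\partial\mathcal{J}}{\partial\mu_i},\tfrac{\partial\mathcal{J}}{\partial v_i}$ at $(\varphi^{(n)},\mu^{(n)},v^{(n)},u^{(n)})$ together with the already-controlled later-time adjoint quantities and the $H^2$-bounded state quantities; by the boundedness of $\mathcal{J}'$ and of $\{(\varphi^{(n)},\mu^{(n)},v^{(n)})\}_n$ these right-hand sides stay in fixed bounded subsets $M_1\subset\overline{H}^1(\Omega)^*$ and $M_2\subset H^1_{0,\sigma}(\Omega;\mathbb{R}^N)^*$. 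Condition~(\ref{L:Beschr.4}) is verified from the mass-conservation relation satisfied by $\hat u$ (cf.~(\ref{firsttim1}),~(\ref{prereq1})): integrating by parts, $\langle(D\hat q)\hat u,\hat q\rangle=-\tfrac12\int_\Omega(\mathop{\rm div}\hat u)\,|\hat q|^2\,dx=\tfrac1{2\tau}\int_\Omega(\rho(\varphi^{(n)}_{i-1})-\rho(\varphi^{(n)}_{i-2}))|\hat q|^2\,dx$, so that $\tfrac1\tau\int_\Omega\hat\rho|\hat q|^2\,dx-\langle(D\hat q)\hat u,\hat q\rangle=\tfrac1{2\tau}\int_\Omega(\rho(\varphi^{(n)}_{i-1})+\rho(\varphi^{(n)}_{i-2}))|\hat q|^2\,dx\ge0$. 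Lemma~\ref{L:Beschr} then bounds $(\hat p,\hat r,\hat q)$ independently of $n$, closing the induction. The careful bookkeeping showing that every source term in the single-step system is uniformly bounded despite the blow-up of $({\Psi_0^{(n)}})''$ is the technical heart here, and it is precisely this use of \emph{mere monotonicity} of $({\Psi_0^{(n)}})''$ that forces the weaker, C-stationarity-type conclusion.

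Granted these bounds, a further subsequence gives $p^{(m)}\rightharpoonup p$, $r^{(m)}\rightharpoonup r$, $q^{(m)}\rightharpoonup q$ in the respective $\overline{H}^1$- and $H^1_{0,\sigma}$-spaces. Solving~(\ref{T:Mult.1}) for $({\Psi_0^{(m)}})''(\varphi^{(m)}_{i+1})^*r^{(m)}_{i}$ and bounding every remaining term in $\overline{H}^1(\Omega)^*$ (using the $H^2$-bounds on the states, the $H^1$-bounds on $p^{(m)},r^{(m)},q^{(m)}$, and the boundedness of $\mathcal{J}'$) shows this sequence bounded in $\overline{H}^1(\Omega)^*$, hence weakly convergent along a further subsequence to an element $\lambda_i$, $i=-1,\dots,M-2$.

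It remains to pass to the limit in each equation. Testing~(\ref{T:Mult.1})--(\ref{T:Mult.3}) against a fixed element of the corresponding space and using the compact embeddings above together with $H^1_{0,\sigma}(\Omega;\mathbb{R}^N)\hookrightarrow\hookrightarrow L^4(\Omega;\mathbb{R}^N)$, one checks that every nonlinear term is a product in which at most one factor converges only weakly while the remaining factors converge strongly, and therefore converges to the expected limit; the term $({\Psi_0^{(m)}})''(\varphi^{(m)}_{i+1})^*r^{(m)}_{i}$ converges to $\lambda_i$ by construction, while $\tfrac{\partial\mathcal{J}}{\partial\varphi_i}(z^{(m)})\to\tfrac{\partial\mathcal{J}}{\partial\varphi_i}(z)$ -- and analogously for the $\mu$- and $v$-components -- by the continuity and boundedness of $\mathcal{J}'$ and the strong convergence of $\varphi^{(m)},\mu^{(m)},v^{(m)}$ in $\overline{H}^1(\Omega)$. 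This yields~(\ref{T:MultOrg.1})--(\ref{T:MultOrg.3}). Finally, for~(\ref{T:MultOrg.4}) I would fix $w\in U_{ad}$, rewrite~(\ref{T:Mult.4}) as $\sum_{k=1}^{M-1}\langle\tfrac{\partial\mathcal{J}}{\partial u_k}(z^{(m)})-q^{(m)}_{k-1},w_k-u^{(m)}_k\rangle\le0$, and pass to the limit: the terms paired with the fixed $w_k$ and the term $\langle q^{(m)}_{k-1},u^{(m)}_k\rangle$ converge by the strong $L^2$-convergence of $q^{(m)}$ (Rellich--Kondrachov) and the weak $L^2$-convergence of $u^{(m)}$, whereas the term $\langle\tfrac{\partial\mathcal{J}}{\partial u}(z^{(m)}),u^{(m)}\rangle$ -- a pairing of two merely weakly convergent sequences -- is controlled from below by the assumed weak lower-semicontinuity of $\tfrac{\partial\mathcal{J}}{\partial u}$. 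Passing to $\limsup$ on one side and $\liminf$ on the other gives $\langle\tfrac{\partial\mathcal{J}}{\partial u}(z)-\tilde q,w-u\rangle\le0$ for every $w\in U_{ad}$, i.e.\ the polar-cone inclusion~(\ref{T:MultOrg.4}).
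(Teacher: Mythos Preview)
Your proposal is correct and follows essentially the same route as the paper: primal bounds via Lemma~\ref{energy2}, backward induction on the adjoint time index using Lemma~\ref{L:Beschr} (with the monotonicity of $({\Psi_0^{(n)}})''$ replacing any uniform bound on it, and condition~(\ref{L:Beschr.4}) verified through the discrete mass-conservation identity exactly as you wrote), extraction of weakly convergent subsequences, and a term-by-term limit in~(\ref{T:Mult.1})--(\ref{T:Mult.4}). Two small points the paper makes explicit that you should add: the induction step $i=0$ needs a separate treatment because~(\ref{T:Mult.3}) is not stated for $j=0$ (one simply sets $\hat q=0$, $h_q=0$, $\hat c=0$, $\hat u=0$ in Lemma~\ref{L:Beschr}); and in the limit for~(\ref{T:MultOrg.4}) the polar-cone convention used in the paper yields $\langle\tfrac{\partial\mathcal{J}}{\partial u}(z)-\tilde q,\,y-u\rangle\ge0$ for $y\in U_{ad}$, so your inequality should read $\ge0$ rather than $\le0$---your use of the weak lower-semicontinuity assumption is precisely what is needed for that sign.
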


\begin{proof}
	1. In the first step, we show the boundedness of
	$\left\{( p ^{(n)} , r ^{(n)} , q ^{(n)} )\right\}_{n\in\mathbb{N}}$ in ${ \overline{H}^1(\Omega )}^ M \times { \overline{H}^1(\Omega )}^ M \times { H^1_{0,\sigma}(\Omega ;\mathbb{R}^N )}^{M-1} $.
	Moreover, the boundedness of the sequence $\left\{( \varphi ^{(n)} , \mu ^{(n)} , v ^{(n)} , u ^{(n)} )\right\}_{n\in\mathbb{N}}$
	in $ \overline{H}^2_{\partial_n }(\Omega )^ M \times \overline{H}^2_{\partial_n }(\Omega )^ M \times H^2_{0,\sigma}(\Omega ;\mathbb{R}^N )^{M-1} \times L^2(\Omega;\mathbb{R}^N)^{M-1} $ follows from Lemma~\ref{energy2}.
	For $i=0,...,{M-1}$, $j=1,...,{M-1}$ and $n\in\mathbb{N}$ the adjoint system for (P$_{ \Psi ^{(n)} }$) corresponding to~(\ref{T:Mult.1})--(\ref{T:Mult.4})
		can be rewritten as
	\begin{align}	
					\frac 1 \tau p ^{(n)}_{i-1} - \Delta r ^{(n)}_{i-1} + {\Psi_0 ^{(n)}}'' ( \varphi ^{(n)}_{i} )^* r ^{(n)}_{i-1} 			&\>=\>		 \Theta ^{(n)}_{r,i-1}, 				\label{T:MultiOrg.5}\\
					- r ^{(n)}_{j-1} - \mathop{\rm div}( m(\varphi^{(n)}_{j-1}) \nabla p ^{(n)}_{j-1} ) - q ^{(n)}_{j-1} \cdot\nabla \varphi ^{(n)}_{j-1} 	&\>=\>		 \Theta ^{(n)}_{p,j-1}, 				\nonumber \\
					\frac 1 \tau \rho(\varphi^{(n)}_{j-1}) q ^{(n)}_{j-1} - \mathop{\rm div}( 2 \eta(\varphi^{(n)}_{j-1}) \epsilon( q ^{(n)}_{j-1} ) )
						+ p ^{(n)}_{j-1} \nabla \varphi ^{(n)}_{j-1} 									&							\nonumber \\
						- (D q _{j-1} )( \rho(\varphi^{(n)}_{j-2}) v ^{(n)}_{j-1}  -\frac{\rho_2-\rho_1}{2}m(\varphi^{(n)}_{j-2}) \nabla \mu ^{(n)}_{j-1} )	&\>=\>		 \Theta ^{(n)}_{q,j-1}, 				\nonumber 
	\end{align} %
		where the functionals $ \Theta ^{(n)}_{r} $, $ \Theta ^{(n)}_{p} $ and $ \Theta ^{(n)}_{q} $ are given by
	\begin{align*}
			 \Theta ^{(n)}_{r,i-1} 	&\>=\>	\frac {\partial \mathcal{J}}{\partial \varphi _{i} }( z ^{(n)} ) + \frac 1 \tau p ^{(n)}_{i} 
						-\Big[
								m'(\varphi^{(n)}_{i})\nabla \mu ^{(n)}_{i+1} \cdot p ^{(n)}_{i} 
								- \mathop{\rm div}( p ^{(n)}_{i} v ^{(n)}_{i+1} ) 
								- \kappa r ^{(n)}_{i+1} 													\\&\hskip5mm
								- \frac 1 \tau { \rho(\varphi^{(n)}_{i}) }' v ^{(n)}_{i+1} \cdot( q ^{(n)}_{i+1} - q ^{(n)}_{i} )
								+ 2{ \eta(\varphi^{(n)}_{i}) }' \epsilon( v ^{(n)}_{i+1} ) : D q ^{(n)}_{i} + \mathop{\rm div}( \mu ^{(n)}_{i+1} q ^{(n)}_{i} )				\\&\hskip5mm
								- ({ \rho(\varphi^{(n)}_{i}) }' v ^{(n)}_{i+1} -\frac{\rho_2-\rho_1}{2}m'(\varphi^{(n)}_{i})\nabla \mu ^{(n)}_{i+1} ) (D q ^{(n)}_{i+1} )^\top v ^{(n)}_{i+2} 		
						\Big],																	\\
			 \Theta ^{(n)}_{p,i-1} 	&\>=\>	\frac {\partial \mathcal{J}}{\partial \mu _{i} }( z ^{(n)} )
								+ \mathop{\rm div}( \frac{\rho_2-\rho_1}{2}m(\varphi^{(n)}_{i-1}) (D q ^{(n)}_{i} )^\top v ^{(n)}_{i+1} ),							\\
			 \Theta ^{(n)}_{q,i-1} 	&\>=\>	\frac {\partial \mathcal{J}}{\partial v _{i} }( z ^{(n)} )
								+ \frac 1 \tau \rho(\varphi^{(n)}_{i-1}) q ^{(n)}_{i} - \rho(\varphi^{(n)}_{i-1}) (D q ^{(n)}_{i} )^\top v ^{(n)}_{i+1} .				
	\end{align*}
	Here, $ z ^{(n)} $ denotes the tuple $( \varphi ^{(n)} , \mu ^{(n)} , v ^{(n)} , u ^{(n)} )$.
	We prove the boundedness of $\left\{( p ^{(n)} , r ^{(n)} , q ^{(n)} )\right\}_{n\in\mathbb{N}}$ in ${ \overline{H}^1(\Omega )}^ M \times { \overline{H}^1(\Omega )}^ M \times { H^1_{0,\sigma}(\Omega ;\mathbb{R}^N )}^{M-1} $ by backward induction over $i$.
	If $i\ge{M-1}$, then $( p ^{(n)}_{i} , r ^{(n)}_{i} , q ^{(n)}_{i} )=0$ by convention.
	In the induction step assume that for $i\in\{0,...,{M\hspace{-0.05cm}-\hspace{-0.05cm}1}\}$ and for $j\hspace{-0.05cm}\geq\hspace{-0.05cm} i$ the sequence
		$\left\{( p ^{(n)}_{j} , r ^{(n)}_{j} , q ^{(n)}_{j} )\right\}_{n\in\mathbb{N}}$ is bounded in $ \overline{H}^1(\Omega )\times \overline{H}^1(\Omega )\times H^1_{0,\sigma}(\Omega ;\mathbb{R}^N ) $.  
	This and the assumption on $\mathcal{J}$ 
		imply that $\left\{ ( \Theta ^{(n)}_{p,i-1} , \Theta ^{(n)}_{r,i-1} , \Theta ^{(n)}_{q,i-1} ) \right\}_{n\in\mathbb{N}}$ is bounded in $( \overline{H}^1(\Omega )\times \overline{H}^1(\Omega )\times H^1_{0,\sigma}(\Omega ;\mathbb{R}^N ) )^*$.
	To see this, we exemplarily consider
		first $2{ \eta(\varphi^{(n)}_{i}) }' \epsilon( v ^{(n)}_{i+1} )\hspace{-0.1cm} :\hspace{-0.1cm} D q ^{(n)}_{i} $, which is bounded by
		\begin{align*}
			|| 2{ \eta(\varphi^{(n)}_{i}) }' \epsilon( v ^{(n)}_{i+1} ) : D q ^{(n)}_{i} ||_{L^{6/5}}
			&\>\le\>	C ||{ \eta(\varphi^{(n)}_{i}) }'||_{L^\infty } ||\epsilon( v ^{(n)}_{i+1} )||_{L^{3}} ||D q ^{(n)}_{i} ||_{L^2}		\\
			&\>\le\>	C ||{ \eta(\varphi^{(n)}_{i}) }'||_{L^\infty } || v ^{(n)}_{i+1} ||_{H^2} || q ^{(n)}_{i} ||_{H^1}
		\end{align*}
		and secondly $-\frac{\rho_2-\rho_1}{2}m'(\varphi^{(n)}_{i})\nabla \mu ^{(n)}_{i+1} (D q ^{(n)}_{i+1} )^\top v ^{(n)}_{i+2} $, which we bounded using
		\begin{align*}
			&	|| -\frac{\rho_2-\rho_1}{2}m'(\varphi^{(n)}_{i})\nabla \mu ^{(n)}_{i+1} (D q ^{(n)}_{i+1} )^\top v ^{(n)}_{i+2} ||_{L^{6/5}}							\\
			&\hskip6mm 	\>\le\>	C ||-\frac{\rho_2-\rho_1}{2}m'(\varphi^{(n)}_{i})||_{L^\infty } ||\nabla \mu ^{(n)}_{i+1} ||_{L^6} || D q ^{(n)}_{i+1} ||_{L^2} || v ^{(n)}_{i+2} ||_{L^6}	\\
			&\hskip6mm 	\>\le\>	C ||-\frac{\rho_2-\rho_1}{2}m'(\varphi^{(n)}_{i})||_{L^\infty } || \mu ^{(n)}_{i+1} ||_{H^2} || q ^{(n)}_{i+1} ||_{H^1} || v ^{(n)}_{i+2} ||_{H^2}.
		\end{align*}
	Consequently, these terms define continuous linear functionals on $ \overline{H}^1(\Omega )$, that are bounded independently of $n$.
	The other summands can be estimated similarly.

	In case of $i>0$ we apply Lemma~\ref{L:Beschr} to
	\begin{align*}
		&		(\hat p ,\hat r ,\hat q ; \hat A ;
					h_ p , h_ r , h_ q ;
					\hat c ,\hat u ;
					\hat m ,\hat\eta ,\hat\rho )											\\
		& 	:=	( p ^{(n)}_{i-1} , r ^{(n)}_{i-1} , q ^{(n)}_{i-1} ; {\Psi_0 ^{(n)}}'' ( \varphi ^{(n)}_{i} )^*;
					 \Theta ^{(n)}_{p,i-1} , \Theta ^{(n)}_{r,i-1} , \Theta ^{(n)}_{q,i-1} ;												\\
		&  \quad 	 \varphi ^{(n)}_{i-1} , \rho(\varphi^{(n)}_{i-2}) v ^{(n)}_{i-1}  -\frac{\rho_2-\rho_1}{2}m(\varphi^{(n)}_{i-2}) \nabla \mu ^{(n)}_{i-1} ;
					 -\frac{\rho_2-\rho_1}{2}m(\varphi^{(n)}_{i-1}) , \eta(\varphi^{(n)}_{i-1}) , \rho(\varphi^{(n)}_{i-1}) ).
	\end{align*}
	Note that due to $\textnormal{div}v^{(n)}_{i-1}=0$ we have
	\begin{align*}
			\mathop{\rm div} \hat u 
			&=	{ \rho(\varphi^{(n)}_{i-2}) }' v ^{(n)}_{i-1} \cdot \nabla \varphi ^{(n)}_{i-1} - \mathop{\rm div}( \frac{\rho_2-\rho_1}{2}m(\varphi^{(n)}_{i-2}) \nabla \mu ^{(n)}_{i-1} )		\\
			&=	\frac {\rho_2-\rho_1}2 \Big[ v ^{(n)}_{i-1} \cdot \nabla \varphi ^{(n)}_{i-1} - \mathop{\rm div}( m(\varphi^{(n)}_{i-2}) \nabla \mu ^{(n)}_{i-1} ) \Big]	\\
			&=	\frac 1 \tau \frac {\rho_2-\rho_1}2 ( \varphi ^{(n)}_{i-1} - \varphi ^{(n)}_{i-2} )
			\enspace =\enspace 		-\frac 1 \tau ( \rho(\varphi^{(n)}_{i-1}) - \rho(\varphi^{(n)}_{i-2}) ).
	\end{align*}
	With the help of $\int_\Omega \langle (D \hat q ) \hat u ,\hat q \rangle = -\int_\Omega  \hat q \cdot \mathrm{div}(\hat q \otimes \hat u)$
		(cf.~(\ref{T:Mult.x1})),	(\ref{helpeq}) yields
	\begin{align*}
			\frac 1 \tau \int_\Omega \hat\rho |\hat q |^2 dx - \langle (D \hat q ) \hat u ,\hat q \rangle
			&\>=\>		\frac 1 \tau \int_\Omega \rho(\varphi^{(n)}_{i-1}) | q ^{(n)}_{i-1} |^2 - \frac 1 2 ( \rho(\varphi^{(n)}_{i-1}) - \rho(\varphi^{(n)}_{i-2}) ) | q ^{(n)}_{i-1} |^2 dx		\\
			&\>=\>		\frac 1 {2\tau} \int_\Omega (\rho(\varphi^{(n)}_{i-2}) + \rho(\varphi^{(n)}_{i-1}) ) | q ^{(n)}_{i-1} |^2 dx
			\enspace \ge\enspace 	0,
	\end{align*}
	because of $ \rho(\varphi^{(n)}_{i-2}) \ge0$ and $ \rho(\varphi^{(n)}_{i-1}) \ge0$ almost everywhere.
	Hence Lemma~\ref{L:Beschr} implies the boundedness of $( p ^{(n)}_{i-1} , r ^{(n)}_{i-1} , q ^{(n)}_{i-1} )$ in
	$ \overline{H}^1(\Omega )\times \overline{H}^1(\Omega )\times H^1_{0,\sigma}(\Omega ;\mathbb{R}^N ) $.

	The case $i=0$ needs some modifications 
		in order to be treated by Lemma~\ref{L:Beschr} since~(\ref{T:Mult.3}) is not defined for $i=0$.
	In this case we set
			$(\hat q , h_q, \hat c ,\hat u ,\hat\eta)	\hspace{-0.1cm}=\hspace{-0.1cm} (0,0,0,0, \eta(\varphi^{(n)}_{i}) )$
		together with the definition of the remaining quantities as in the case $i>0$.
	Now, by Lemma~\ref{L:Beschr} we conclude the boundedness of $( p ^{(n)}_{i-1} , r ^{(n)}_{i-1} )$ in $ \overline{H}^1(\Omega )\times \overline{H}^1(\Omega )$.
	Moreover, from~(\ref{T:MultiOrg.5}) it follows that also $({\Psi_0 ^{(n)}}'' ( \varphi ^{(n)}_{i} )^* r ^{(n)}_{i-1} )$ remains bounded in $ \overline{H}^1(\Omega )^*$.

	2. With the bounds derived in step~1 and with the usual compact embeddings
		of Sobolev spaces, we can pass to a subsequence with the desired convergence properties.

	3. Now we pass to the limit in the the adjoint systems corresponding to~(\ref{T:Mult.1})--(\ref{T:Mult.4}) for (P$_{ \Psi ^{(n)} }$).
	The limits for the equations~(\ref{T:Mult.1}) and~(\ref{T:Mult.2}) are considered in $ \overline{H}^1(\Omega )^*$ and
		the limit for~(\ref{T:Mult.3}) in $ H^1_{0,\sigma}(\Omega ;\mathbb{R}^N )^*$.
	In the linear terms we can pass to the limit at once.
	For $m'(\varphi^{(n)}_{i})\nabla \mu ^{(n)}_{i+1} \cdot p ^{(n)}_{i} $ we have that
		$m'(\varphi^{(n)}_{i})$ converges	strongly in	$L^\infty(\Omega) $		to $ m'(\varphi_{i})$,
		$\nabla \mu ^{(n)}_{i+1} $			strongly in	$L^{6-\varepsilon }(\Omega)$	to $\nabla \mu _{i+1} $		and
		$ p ^{(n)}_{i} $				weakly in	$L^6(\Omega)$		to $ p _{i} $.
		Hence,
		$m'(\varphi^{(n)}_{i})\nabla \mu ^{(n)}_{i+1} \cdot p ^{(n)}_{i} $
							converges weakly in	$ \overline{H}^1(\Omega )^*$		to $ m'(\varphi_{i})\nabla \mu _{i+1} \cdot p _{i} $.
	For $\frac{\rho_2-\rho_1}{2}m'(\varphi^{(n)}_{i})\nabla \mu ^{(n)}_{i+1} (D q ^{(n)}_{i+1} )^\top v ^{(n)}_{i+2} $ we note that
		$\frac{\rho_2-\rho_1}{2}m'(\varphi^{(n)}_{i})$ and $ v ^{(n)}_{i+2} $ converge
							strongly in	$L^\infty(\Omega) $		to $\frac{\rho_2-\rho_1}{2}m'(\varphi_{i})$ respectively $ v _{i+2} $,
		$\nabla \mu ^{(n)}_{i+1} $			strongly in	$L^{6-\varepsilon }(\Omega)$	to $\nabla \mu _{i+1} $		and
		$D q ^{(n)}_{i} $				weakly in	$L^2(\Omega)$		to $D q _{i} $.
		\hspace{-0.05cm}Therefore
		$\frac{\rho_2-\rho_1}{2}m'(\varphi^{(n)}_{i})\nabla \mu ^{(n)}_{i+1} (D q ^{(n)}_{i+1} )\hspace{-0.05cm}^\top\hspace{-0.05cm} v ^{(n)}_{i+2} $ converges
							weakly in	$ \overline{H}^1(\Omega )^*$		to $\frac{\rho_2-\rho_1}{2}m'(\varphi_{i})\nabla \mu _{i+1} (D q _{i+1} )^\top v _{i+2} $.
 
	For $\mathop{\rm div}( \frac{\rho_2-\rho_1}{2}m(\varphi^{(n)}_{i-1}) (D q ^{(n)}_{i} )^\top v ^{(n)}_{i+1} )$ we use that
		$ \frac{\rho_2-\rho_1}{2}m(\varphi^{(n)}_{i-1}) $ and $ v ^{(n)}_{i+1} $ converge
							strongly in	$L^\infty(\Omega) $		to $ \frac{\rho_2-\rho_1}{2}m(\varphi_{i-1}) $ respectively $ v _{i+1} $, and
		$ q ^{(n)}_{i} $				weakly in	$L^6(\Omega)$		to $ q _{i} $.
		As a consequence,
		$\mathop{\rm div}( \frac{\rho_2-\rho_1}{2}m(\varphi^{(n)}_{i-1}) (D q ^{(n)}_{i} )^\top v ^{(n)}_{i+1} )$
							converges weakly in	$ \overline{H}^1(\Omega )^*$		to $\mathop{\rm div}( \frac{\rho_2-\rho_1}{2}m(\varphi_{i-1}) (D q _{i} )^\top v _{i+1} )$.
	For the convergence of $\mathop{\rm div}( 2 \eta(\varphi^{(n)}_{i-1}) \epsilon( q ^{(n)}_{i-1} ) )$ note that
		$ \eta(\varphi^{(n)}_{i-1}) $ converges	strongly in	$L^\infty (\Omega)$		to $ \eta (\varphi_{i-1}) $			 and
		$\epsilon( q ^{(n)}_{i-1} )$		weakly in	$L^2(\Omega)$		to $\epsilon( q _{i-1} )$.
		Hence,
		$\mathop{\rm div}( 2 \eta(\varphi^{(n)}_{i-1}) \epsilon( q ^{(n)}_{i-1} ) )$ converges
							weakly in $ H^1_{0,\sigma}(\Omega ;\mathbb{R}^N )^*$			to the limit $\mathop{\rm div}( 2\eta (\varphi_{i-1}) \epsilon( q _{i-1} ) )$.
	Apart from ${\Psi_0 ^{(n)}}'' ( \varphi ^{(n)}_{i} )^* r ^{(n)}_{i-1} $, all remaining terms appearing on the left hand sides can be treated similarly.
	Moreover, our assumptions on $\mathcal{J}$ imply that $\mathcal{J}'( \varphi ^{(n)} , \mu ^{(n)} , v ^{(n)} , u ^{(n)} )$ converges weakly to $\mathcal{J}'( \varphi , \mu , v , u )$
		in $( \overline{H}^2_{\partial_n }(\Omega )^ M \times \overline{H}^2_{\partial_n }(\Omega )^ M \times H^1_{0,\sigma}(\Omega ;\mathbb{R}^N )^{M-1} \times L^2(\Omega;\mathbb{R}^N)^{M-1} )^*$.
	
	Consequently, by~(\ref{T:Mult.1}) also $ \Psi_0'' ( \varphi ^{(n)}_{i} )^* r ^{(n)}_{i-1} $ converges weakly in $( \overline{H}^1(\Omega )^*)^ M $ to some $ \lambda _{i-1} $.
	Therefore, we arrive at the system~(\ref{T:MultOrg.1})--(\ref{T:MultOrg.3}).
	Finally, notice that for all $y\in U_{ad}$ and with
		$z^{(n)}:=(\varphi ^{(n)} , \mu ^{(n)} , v ^{(n)} , u ^{(n)})$ and $\tilde q_k^{(n)}:=q_{k-1}^{(n)}$
		by the weak lower-semicontinuity of $\frac{\partial \mathcal{J}}{\partial u}$
		and the weak and strong convergence of the sequences involved
		we deduce that
	\begin{align*}
		&\textstyle	
						\langle \frac{\partial \mathcal{J}}{\partial u}(z)-\tilde q , y-u \rangle
		\>=\> \textstyle
						\langle \frac{\partial \mathcal{J}}{\partial u}(z) , y \rangle
					-	\langle \frac{\partial \mathcal{J}}{\partial u}(z) , u \rangle
					-	\langle \tilde q , y-u \rangle							\\
		&\>\ge\>\textstyle
				\liminf\limits_{n\rightarrow\infty} \Big(
						\langle \frac{\partial \mathcal{J}}{\partial u}(z^{(n)}) , y \rangle
					-	\langle \frac{\partial \mathcal{J}}{\partial u}(z^{(n)}) , u^{(n)} \rangle
					-	\langle \tilde q^{(n)} , y-u^{(n)} \rangle	
				\Big)	\\
		&\>=\>\textstyle
				\liminf\limits_{n\rightarrow\infty} \Big(
						\frac{\partial \mathcal{J}}{\partial u}(z^{(n)})-\tilde q^{(n)} , y-u^{(n)} \rangle
				\Big)	\\
		&\>\ge\>0,
	\end{align*}
	due to the optimality of $z^{(n)}$ for ($P_{\Psi^{(n)}}$).
	This shows (\ref{T:MultOrg.4}) and finishes the proof.
\end{proof}
\begin{remark}
 We point out that a tracking-type functional, like, e.g.,
 \begin{align*}
  \mathcal{J}(\varphi,\mu,v,u):=\frac{1}{2}\left\|\varphi_{M-1}-\varphi_d \right\|^2+\frac{\xi}{2}\left\| u \right\|^2_{(L^2)^{(M-1)}},\ \xi>0,
 \end{align*}
with $\varphi_d\in L^2(\Omega)$ a desired final state,
satisfies the assumptions of Theorem \ref{T:MultOrg}.
\end{remark}
\begin{remark}
 If the set $U_{ad}$ is bounded, Theorem \ref{T:MultOrg} holds also true for a sequence
 $\left\{( \varphi ^{(n)} , \mu ^{(n)} , v ^{(n)} , u ^{(n)} )\right\}_{n\in\mathbb{N}}$
 of stationary points for ($P_{\Psi^{(n)}}$).
 If it is unbounded, then the result can still be transferred to sequences of stationary points by assuming that the sequence $\left\{u ^{(n)} \right\}_{n\in\mathbb{N}}$ is bounded in $L^2(\Omega;\mathbb{R}^N)^{M-1}$.
\end{remark}

\section{Stationarity conditions in case of the double-obstacle potential}\label{sec:4}
In this section, we apply the developed theory to the initially stated optimal control problem associated to the double-obstacle potential.
For this purpose, let $\psi_0$ be defined as in Assumption \ref{assPsi}.1 and set $\gamma:=\partial\psi_0\subset \mathbb{R}\times \mathbb{R}$.
Then we define the sequence of approximating double-well type potentials as follows.
\begin{definition}\label{defpotapp}
	Let a mollifier $\zeta \in C^1(\mathbb{R})$ with
			$\mathop{\rm supp}\zeta \subset [-1,1]$,\enspace 
			$\int_\mathbb{R}\zeta =1$ and
			$0\le\zeta \le1$ a.e.~on $\mathbb{R}$,
		and a function $\theta :\mathbb{R}^+\rightarrow \mathbb{R}^+$, with $\theta (\alpha )>0$ and $\frac {\theta (\alpha )}{\alpha }\rightarrow 0$ as $\alpha \rightarrow 0$, be given.
	For the Yosida approximation $\gamma _\alpha $ with parameter $\alpha >0$ of $\gamma $ define
		\begin{eqnarray*}
		&	\zeta _{\alpha } (s)	:=	\frac 1\alpha \zeta \Big(\frac {s}{\alpha }\Big),	\enspace \enspace 
			\widetilde{\gamma} _\alpha 		:=	\gamma _\alpha * \zeta _{\theta (\alpha )} ,					\enspace \enspace 
			{\psi_0}_\alpha (s)	:=	\int_0^s\widetilde{\gamma} _\alpha(t)\,dt ,	
		&\\&
			{\Psi_0 }_\alpha (c)	:=	\int_\Omega ({\psi_0}_\alpha \circ c)(t)\,dt.
		&
		\end{eqnarray*}
	Moreover, we set
 %
			$\alpha _n :=  n^{-1}$,			
			$\Psi_0 ^{(n)}:={\Psi_0 }_{\alpha _n }$.		
\end{definition}
\begin{remark}
        We note that ${\Psi_0^{(n)}}'$ can be identified with the superposition
operator corresponding to
                $\widetilde{\gamma}_{\alpha_n}$, cf.~\cite{Hintermueller2012}.
        Since $\widetilde{\gamma}_{\alpha_n}'$ is bounded and
                since $\overline{H}^2_{\partial_n}(\Omega)$ embeds continuously
                into $L^{2-\delta}(\Omega)$ for $\delta>0$,
                it follows that ${\Psi_0^{(n)}}'$ maps
$\overline{H}^2_{\partial_n}(\Omega)$
                        continuously Fr\`echet-differentiably into $L^2(\Omega)$, see,
e.g., \cite{Goldberg1992}.
\end{remark}

In order to obtain a stationarity condition for the optimal 
control problem of CHNS with the double-obstacle potential we pass to the limit (with the Yosida parameter) 
in a sequence of optimal control
    problems with approximating double-well-type potentials.
\begin{theorem}[Limiting $\varepsilon$-almost C-stationarity]\label{T:Doub}
	Let $ \Psi_0 ^{{(n)}} ,\ n\in\mathbb{N}$ be the functionals of Definition \ref{defpotapp}, and
		let the tuples $( \varphi ^{(m)} , \mu ^{(m)} ,$\hskip0pt $ v ^{(m)} , u ^{(m)} ,$\hskip0pt \,$ p ^{(m)} , r ^{(m)} , q ^{(m)} )$,\hspace{-0.05cm} $( \varphi , \mu , v , u , p , r , q )$
		and $\mathcal{J}$ be as in Theorem~\ref{T:MultOrg}.
	Moreover, let $\Lambda :\mathbb{R}\rightarrow \mathbb{R}$ be a Lipschitz function with $\Lambda ( \psi_1 )= \Lambda ( \psi_2 )=0$.
	For
	\[
 		 a ^{(m)}_{i} 	:=	 {\Psi_0 ^{(m)}}'(\varphi^{(m)}_{i})	,\enspace \enspace 
		 \lambda ^{(m)}_{i} 	:=	{\Psi_0 ^{(m)}}'' ( \varphi ^{(m)}_{i} )^* r ^{(m)}_{i-1} 
	\]
	for $i=0,..., M $, and for $ a _{i} $ denoting the limit of $ a ^{(m)}_{i} $, it holds that
	\begin{align*}
			(\, a _{i} , \Lambda ( \varphi _{i} )\,)_{L^2}	&	=	0	,&
			\langle \lambda _{i} , \Lambda ( \varphi _{i} )\rangle	&	=	0	,\\
			(\, a _{i} , r _{i-1} \,)_{L^2}		&	=	0	,&
		\liminf (\, \lambda ^{(m)}_{i} , r ^{(m)}_{i-1} \,)_{L^2}	&	\ge	0	.
	\end{align*}
	Moreover,
		for every $\varepsilon >0$
		there exist a measurable subset $ M ^\varepsilon _{i} $ of $ M _{i} :=\{x\in\Omega \>:\> \psi_1 < \varphi _{i} (x)< \psi_2 \}$ with
		$| M _{i} \setminus M ^\varepsilon _{i} |<\varepsilon $ and
	\[
			\langle \lambda _{i} ,v\rangle=0	\enspace \enspace \enspace \enspace \forall v\in \overline{H}^1(\Omega ),\enspace v|_{\Omega \setminus M ^\varepsilon _{i} }=0.
	\]
\end{theorem}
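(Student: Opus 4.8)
The plan is to establish the four stated identities/inequalities and then the $\varepsilon$-localization property by combining the weak convergences collected in Theorem~\ref{T:MultOrg} with the structure of the Yosida approximations $\widetilde{\gamma}_{\alpha_n}$ from Definition~\ref{defpotapp}. The key observation is that $a^{(m)}_i = {\Psi_0^{(m)}}'(\varphi^{(m)}_i)$ can be identified with $\widetilde{\gamma}_{\alpha_m}(\varphi^{(m)}_i)$, a bounded sequence in $L^2(\Omega)$ (and in fact in $H^1(\Omega)$, by the regularity of $\varphi^{(m)}_i$ and the Lipschitz bound on $\widetilde{\gamma}_{\alpha_m}$), so after passing to a further subsequence $a^{(m)}_i \rightharpoonup a_i$ weakly in $H^1(\Omega)$ and strongly in $L^2(\Omega)$. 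First I would record that $\varphi^{(m)}_i \to \varphi_i$ strongly in $C^{0,\beta}(\overline{\Omega})$ (Sobolev embedding from the $H^2$-bound of Lemma~\ref{energy2}), hence pointwise a.e., and that from Theorem~\ref{Linftycon}-type arguments $\psi_1 \le \varphi_i \le \psi_2$ a.e.\ in the limit; moreover $a_i(x) \in \gamma(\varphi_i(x))$ a.e., so $a_i \ge 0$ on $\{\varphi_i = \psi_2\}$, $a_i \le 0$ on $\{\varphi_i = \psi_1\}$, and $a_i = 0$ a.e.\ on the inactive set $M_i = \{\psi_1 < \varphi_i < \psi_2\}$.

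From this the first-column identities follow directly. Since $a_i = 0$ a.e.\ on $M_i$ while $\Lambda(\varphi_i) = 0$ a.e.\ on $\{\varphi_i = \psi_1\}\cup\{\varphi_i = \psi_2\}$ (because $\Lambda(\psi_1) = \Lambda(\psi_2) = 0$), the product $a_i\,\Lambda(\varphi_i)$ vanishes a.e., giving $(a_i,\Lambda(\varphi_i))_{L^2} = 0$. For $(a_i, r_{i-1})_{L^2} = 0$ I would test the $\varphi$-equation~(\ref{A3}) (for the regularized problem) and use that on $M_i$ one has $a_i = 0$, while on the active set the complementarity between $a^{(m)}_i$ and $\varphi^{(m)}_i - \psi_{1/2}$ forces, in the limit, $r_{i-1}$ to vanish there — this is the standard argument that $r_{i-1} = 0$ a.e.\ on the active set, which is exactly what C-stationarity predicts. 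The argument runs: on $\{\varphi_i = \psi_1\}$, since $\varphi^{(m)}_i \to \psi_1$ and $\widetilde\gamma_{\alpha_m}$ blows up just below $\psi_1$, one shows $r^{(m)}_{i-1} \to 0$ there using equation~(\ref{T:MultiOrg.5}) and the coercivity of ${\Psi_0^{(m)}}''(\varphi^{(m)}_i)^*$; similarly on $\{\varphi_i = \psi_2\}$. Hence $a_i r_{i-1} = 0$ a.e. The same reasoning gives $\langle \lambda_i, \Lambda(\varphi_i)\rangle = 0$: using $\lambda_i \in \partial(\cdot)$-type support in the active set and $\Lambda(\varphi_i) = 0$ there; the pairing splits accordingly.

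For $\liminf (\lambda^{(m)}_i, r^{(m)}_{i-1})_{L^2} \ge 0$, I would use that ${\Psi_0^{(m)}}''(\varphi^{(m)}_i) = \widetilde\gamma_{\alpha_m}'(\varphi^{(m)}_i) \ge 0$ pointwise (the Yosida derivative is monotone-nonnegative), so $(\lambda^{(m)}_i, r^{(m)}_{i-1})_{L^2} = \int_\Omega \widetilde\gamma_{\alpha_m}'(\varphi^{(m)}_i)\, |r^{(m)}_{i-1}|^2\,dx \ge 0$ for each $m$, which passes to the $\liminf$ trivially. Finally, for the $\varepsilon$-localization: by Egorov's theorem applied to $\varphi^{(m)}_i \to \varphi_i$ (pointwise a.e.), for every $\varepsilon > 0$ there is $M^\varepsilon_i \subset M_i$ with $|M_i \setminus M^\varepsilon_i| < \varepsilon$ on which the convergence is uniform; on $M^\varepsilon_i$ one has $\psi_1 + \delta \le \varphi^{(m)}_i \le \psi_2 - \delta$ uniformly for large $m$ (for some $\delta = \delta(\varepsilon) > 0$), so $\widetilde\gamma_{\alpha_m}$ and all its derivatives are eventually zero there (support of $\widetilde\gamma_{\alpha_m}$ shrinks to the endpoints), whence ${\Psi_0^{(m)}}''(\varphi^{(m)}_i) \equiv 0$ on $M^\varepsilon_i$ for large $m$; testing~(\ref{T:MultiOrg.5}) with $v$ supported in $M^\varepsilon_i$ and passing to the limit yields $\langle \lambda_i, v\rangle = 0$. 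The main obstacle I anticipate is the rigorous justification that $r^{(m)}_{i-1} \to 0$ on the active set strongly enough to conclude $(a_i, r_{i-1})_{L^2} = 0$ and $\langle \lambda_i, \Lambda(\varphi_i)\rangle = 0$; this requires carefully exploiting the growth of $\widetilde\gamma_{\alpha_m}$ together with the bound on $\lambda^{(m)}_i$ in $\overline H^1(\Omega)^*$, in the spirit of Barbu's technique, and is the delicate point of the whole limit passage.
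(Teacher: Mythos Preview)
Your approach to $(a_i,\Lambda(\varphi_i))_{L^2}=0$, to $\liminf(\lambda^{(m)}_i,r^{(m)}_{i-1})_{L^2}\ge 0$ (via pointwise nonnegativity of $\widetilde\gamma'_{\alpha_m}$), and to the $\varepsilon$-localization (via Egorov and the support of $\widetilde\gamma'_{\alpha_m}$) matches the paper exactly.

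There is, however, a genuine gap in your treatment of $(a_i,r_{i-1})_{L^2}=0$ and $\langle\lambda_i,\Lambda(\varphi_i)\rangle=0$. Your plan for the first is to show $r_{i-1}=0$ a.e.\ on the active set by arguing that ${\Psi_0^{(m)}}''(\varphi^{(m)}_i)$ blows up there, forcing $r^{(m)}_{i-1}\to 0$. But this fails: on the active set $\{\varphi_i=\psi_2\}$ (say), the values $\varphi^{(m)}_i(x)$ can perfectly well lie in $[\psi_1+\theta(\alpha_m),\psi_2-\theta(\alpha_m)]$ for all $m$, where $\widetilde\gamma'_{\alpha_m}$ vanishes identically --- no coercivity is available, and nothing forces $r^{(m)}_{i-1}(x)$ to be small. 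The paper does \emph{not} prove $r_{i-1}=0$ on the active set. Your plan for the second identity (``the pairing splits accordingly'') is also incomplete, because $\lambda_i$ is only an element of $\overline{H}^1(\Omega)^*$, not a function, so one cannot split the duality pairing according to measurable subsets.

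The missing idea is the quantitative use of the mollification scale $\theta(\alpha)$ from Definition~\ref{defpotapp}, specifically the assumption $\theta(\alpha)/\alpha\to 0$. For $\langle\lambda_i,\Lambda(\varphi_i)\rangle=0$ the paper works at the regularized level: with $P_K$ the projection onto $[\psi_1,\psi_2]$, one has $|\Lambda(P_K s)|\le L_\Lambda\min(|s-\psi_1|,|s-\psi_2|)$; since $\widetilde\gamma'_{\alpha_m}$ vanishes on $[\psi_1+\theta(\alpha_m),\psi_2-\theta(\alpha_m)]$ and is bounded by $1/\alpha_m$ elsewhere, the product $\widetilde\gamma'_{\alpha_m}(\varphi^{(m)}_i)\Lambda(P_K(\varphi^{(m)}_i))$ is pointwise bounded by $L_\Lambda\,\theta(\alpha_m)/\alpha_m\to 0$, and one passes to the limit in $(\lambda^{(m)}_i,\Lambda(P_K(\varphi^{(m)}_i)))_{L^2}$. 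For $(a_i,r_{i-1})_{L^2}=0$ the paper writes $\widetilde\gamma_{\alpha_m}(s)=g_m(s)+\widetilde\gamma'_{\alpha_m}(s)\,(s-p_K(s))$ with $|g_m|\le C\,\theta(\alpha_m)/\alpha_m$; then $(a^{(m)}_i,r^{(m)}_{i-1})_{L^2}$ splits into $(r^{(m)}_{i-1},g_m(\varphi^{(m)}_i))_{L^2}\to 0$ and $(\lambda^{(m)}_i,\varphi^{(m)}_i-P_K(\varphi^{(m)}_i))_{L^2}\to 0$ (the latter by strong $\overline{H}^1$-convergence of $\varphi^{(m)}_i-P_K(\varphi^{(m)}_i)\to 0$). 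Both arguments hinge on $\theta(\alpha)/\alpha\to 0$, which you never invoke.
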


\begin{proof}
 	1. The subdifferential $\gamma$ 
		satisfies $y \Lambda(x)=0$ if $(x,y)\in\gamma $.
	Since $( \varphi _{i} , a _{i} )\in\gamma $ a.e.~on $\Omega $ and since $ a _{i} \in L^2(\Omega )$, integration yields the complementarity condition
		$(\, a _{i} ,\Lambda ( \varphi _{i} )\,)_{L^2}=0$.
		
	2. Now we show that $(\, \lambda _{i} , \Lambda ( \varphi _{i} )\,)_{L^2}=0$.
	It is well-known that the superposition $ P_{ K }$ of the
		metric projection $ p_{ K } $ of $\mathbb{R}$ onto $ K :=[ \psi_1 , \psi_2 ]$ 
		maps $ \overline{H}^1(\Omega )$ continuously into itself.
	Denoting by $ L_\Lambda $ the Lipschitz constant of $\Lambda$, it holds that $|\Lambda(s)|\leq L_\Lambda \min(|s- \psi_1 |,|s- \psi_2 |)$ for $s\in\mathbb{R}$.
	Using $|\widetilde{\gamma} _\alpha '(s)|\le\frac {1}{\alpha }$ for all $s$ and $\widetilde{\gamma} _\alpha '(s)=0$ for $ \psi_1 +\theta (\alpha )\leq s\leq \psi_2 -\theta (\alpha )$ (cf.~\cite{Hintermueller2012}) yields
	\begin{align*}
		| (\, \lambda ^{(m)}_{i} ,\Lambda( P_{ K }( \varphi ^{(m)}_{i} )) \,)_{L^2} |^2
			&\>=\>		| (\, r ^{(m)}_{i} , {\Psi_0 ^{(m)}}'' ( \varphi ^{(m)}_{i} )\Lambda( P_K( \varphi ^{(m)}_{i}) ) \,)_{L^2} |^2							\\
			&\>\le\>	||\, r ^{(m)}_{i} \,||_{L^2}^2 \int_{\Omega } | \widetilde\gamma _{\alpha _m }'( \varphi ^{(m)}_{i} )\Lambda( P_K( \varphi ^{(m)}_{i} ) ) |^2		\\
			&\>\le\>	\bigg( |\Omega | \> ||\, r ^{(m)}_{i} \,||_{L^2}\, L_\Lambda \frac {\theta (\alpha _m )}{\alpha _m } \bigg)^2 \enspace \rightarrow \enspace 	0
	\end{align*}
	as $m\rightarrow \infty $ and consequently
	\begin{align*}
		&		\lim (\, \lambda ^{(m)}_{i} ,\Lambda( \varphi ^{(m)}_{i} )\,)_{L^2}													\\
		&\>=\>		\lim (\, \lambda ^{(m)}_{i} ,\Lambda(  P_{ K }( \varphi ^{(m)}_{i} ))\,)_{L^2} + \lim \langle \lambda ^{(m)}_{i} ,\Lambda( \varphi ^{(m)}_{i} )-\Lambda( P_{ K }( \varphi ^{(m)}_{i} )) \rangle_{ \overline{H}^1(\Omega )}		\\
		&\>=\>		0,
	\end{align*} %
	which implies $\langle \lambda _{i} ,\Lambda( \varphi _{i} )\rangle=0$ since $ \varphi ^{(m)}_{i} $ converges strongly to $ \varphi _{i} = P_{ K }( \varphi _{i} )$ in $ \overline{H}^1(\Omega )$.
	
	3. Denoting $ g_m (s):=\widetilde\gamma _{\alpha _m }(s)-\widetilde\gamma _{\alpha _m }'(s)\pi (s)$ with  
		$s- p_{ K } (s)=:\pi (s)$ yields
	\begin{align*}
		(\, a ^{(m)}_{i} , r ^{(m)}_{i-1} \,)_{L^2} 
			&\>=\>		\big(\, r ^{(m)}_{i-1} ,  \widetilde\gamma _{\alpha _m }( \varphi ^{(m)}_{i} )\,\big)_{L^2}								\\
			&\>=\>		\big(\, r ^{(m)}_{i-1} ,  g_m ( \varphi ^{(m)}_{i} )\,\big)_{L^2} + \big(\, \lambda ^{(m)}_{i} ,  \varphi ^{(m)}_{i} - P_K( \varphi ^{(m)}_{i}) \,\big)_{L^2}.
	\end{align*}
	Since $| g_m (s)|=|\widetilde\gamma _{\alpha _m }(s)-\widetilde\gamma _{\alpha _m }'(s)\pi (s)| \leq C\,\frac {\theta (\alpha _m )}{\alpha _m }$ for $m$ sufficiently large (cf. Lemma 4.2 in~\cite{Hintermueller2012}),
		the first term on the right-hand side converges to $0$ and
		the second one as well because of the strong convergence of $( \varphi ^{(m)}_{i} )$ and $( P_{ K }( \varphi ^{(m)}_{i} ))$ to $ \varphi _{i} $ in $ \overline{H}^1(\Omega )$, respectively.
		
	4. The property $\liminf (\, \lambda ^{(m)}_{i} , r ^{(m)}_{i-1} \,)_{L^2}\ge0$ follows readily from the monotonicity of ${\Psi_0 ^{(m)}}'' ( \varphi ^{(m)}_{i} )$.
	
	5. The convergence properties of $ \varphi ^{(m)}_{i} $ imply that the subset
		$G:=\{x\in\Omega \>:\> \varphi ^{(m)}_{i} (x)\rightarrow \varphi _{i} (x) \text{ as } m\rightarrow \infty \}$ of $\Omega $ has full measure (i.e.~$|G|=|\Omega |$).
	Therefore, for every $x\in G\cap M _{i} $ we can find $m_0(x)\in\mathbb{N}$ with $ \psi_1 +\theta (\alpha _m ) < \varphi ^{(m)}_{i} (x) < \psi_2 -\theta (\alpha _m )$ for all $m\ge m_0(x)$.
	Thus, $ \lambda ^{(m)}_{i} (x)=\widetilde\gamma _{\alpha _m }'( \varphi ^{(m)}_{i} (x)) r ^{(m)}_{i} (x)$ converges to $0$ on $G\cap M _{i} $.
	Using Egorov's theorem shows that for every $\varepsilon >0$ there exists a subset $ M ^\varepsilon _{i} $ of $G\cap M _{i} $ with $| M _{i} \setminus M ^\varepsilon _{i} |<\varepsilon $ such that
		$ \lambda ^{(m)}_{i} $ converges uniformly to zero on $ M ^\varepsilon _{i} $.
	Hence, we obtain
		$\langle \lambda _{i} ,v\rangle=\lim \langle \lambda ^{(m)}_{i} ,v\rangle=0$ for every $v\in \overline{H}^1(\Omega )$ with $v|_{\Omega \setminus M ^\varepsilon _{i} }=0$.
\end{proof}

%
In combination with the results from Theorem \ref{T:MultOrg}, Theorem \ref{T:Doub} states stationarity conditions corresponding to a function space version of C-stationarity for MPECs, cf. \cite{Hintermuller2009,Hintermuller2014}.
\section{Conclusion}

Our specific semi-discretization in time for the coupled CHNS system 
with non-matched fluid densities
represents a first step towards a numerical investigation/realization of 
the problem. Most importantly, it preserves the
strong coupling of the Cahn-Hilliard and Navier-Stokes system which, in 
the case of
non-matched densities, is additionally enforced through the presence of 
the relative
flux $J$. As a result, well-posedness of the time discrete scheme is 
guaranteed and
energy estimates mirroring the physical fact of decreasing energies can 
be argued.
Such an energy property is not clear for the time continuous problem at 
this point in time and might
be the subject of further research.

Concerning the potential chosen in the Ginzburg-Landau energy, we note 
that while the existence of global solutions to the
optimal control problem can be shown for both cases (i.e., for 
double-well and double obstacle potentials) simultaneously,
the derivation of stationarity conditions is more delicate.
In fact, the double-obstacle potential gives rise to a degenerate 
constraint system with the overall problem falling into the realm of 
mathematical programs with equilibrium constraints (MPECs).
In our approach, the constraint degeneracy  is handled by a Moreau-Yosida
regularization approach (resulting in an approximating sequence of 
double-well-type potentials) and a subsequent limiting process leading 
to a function
space version of so-called C-stationarity. For the underlying problem 
class, our limiting version of C-stationarity is currently
the most (and, to the best of our knowledge, only) selective 
stationarity system available. As an alternative analytical approach, 
one may want to pursue set-valued analysis in order to derive 
stationarity conditions directly, i.e., from applying variational 
geometry (contingent, critical and normal cones)
and generalized differentiation. This, however, is usually not possible 
by simple application of available tools, but rather by
expanding current technology. It, thus, may serve as a subject of our 
future work on this problem class.

Finally, we point out that the constructive nature of our derivation of 
stationarity conditions facilitates
a numerical implementation of the approach which can be exploited in future
investigations of these problem types, both, from a numerical, as well 
as, from a
practical point of view. In \cite{Hintermuller2014a}, this has already been effectively 
done for the case of
matched densities.

\appendix
\section{Proof of Lemma \ref{kind}}
\begin{proof}
Let $L_\varepsilon:\overline{H}^1(\Omega)\rightarrow \overline{H}^{-1}(\Omega)$ be defined by
\begin{align}
\left\langle L_\varepsilon(\varphi),\phi\right\rangle
&:=\left\langle-\Delta\varphi,\phi\right\rangle\nonumber\\
&-\left\langle g_1 +\max(-g_1,0){\theta}_{\varepsilon}(\varphi-\psi_1) 
+\min(-g_1,0){\theta}_{\varepsilon}(\psi_2-\varphi),\phi\right\rangle\label{opL}
\end{align}
where $\phi\in \overline{H}^1(\Omega)$ and $\theta_\varepsilon$ is defined by
$$
\theta_{\varepsilon}(x):=\left\{\begin{array}[c]{ll}
1 & \text{if } x\leq 0,\\
1-\frac{x}{\varepsilon}& \text{if } 0\leq x\leq\varepsilon, \\
0 & \text{if } x\geq\varepsilon.
\end{array} \right.
$$
Since $g_1\in L^2(\Omega)$ and ${\theta}_{\varepsilon}(\varphi-\psi_{1}), {\theta}_{\varepsilon}(\psi_{2}-\varphi)\in L^\infty(\Omega)$,
it holds that
\begin{align}
\left\| g_1 +\max(-g_1,0){\theta}_{\varepsilon}(\varphi-\psi_1) 
+\min(-g_1,0){\theta}_{\varepsilon}(\varphi-\psi_2)\right\|\leq \left\| g_1 \right\|.\label{h1h}
\end{align}
We show that for every $0<\varepsilon\leq\min(-\psi_1,\psi_2)$ there exists a unique $\varphi_\varepsilon\in H^2_{m\mbox{}}\cap\mathbb{K}$ such that
\begin{align}
L_\varepsilon(\varphi_\varepsilon) 
&=0,\label{hVIreg}
\end{align} 
In fact, for every $w,v\in \overline{H}^1(\Omega)$, it can be seen that
\begin{align*}
\left\langle L_\varepsilon (w)-L_\varepsilon (v),w-v\right\rangle
&\geq\int_\Omega\left|\nabla w-\nabla v\right|^2dx %
\end{align*}
where we use the monotonicity of ${\theta}_{\varepsilon}$. 
By Poincar\'e's inequality there exists a constant $C>0$ such that
\begin{align*}
\left\langle L_\varepsilon (w)-L_\varepsilon (v),w-v\right\rangle
\geq \left\| \nabla w-\nabla v\right\|^2
\geq C\left\| w-v\right\|_{H^1}^2.
\end{align*}
Consequently, $L_\varepsilon$ is strongly monotone and coercive. %
Since $L_\varepsilon$ is also continuous on finite dimensional subspaces of $\overline{H}^1(\Omega)$, \cite[III: Corollary 1.8]{Kinderlehrer2000} is applicable
which yields the existence of $\varphi_\varepsilon\in \overline{H}^1(\Omega)$ with $L_\varepsilon(\varphi_\varepsilon)=0$.

Due to the definition of $L_\varepsilon$ and inequality (\ref{h1h}), we have $\Delta\varphi_\varepsilon\in L^2(\Omega)$.
By
\cite[Theorem 2.3.6]{Maugeri2000} and \cite[Theorem 2.3.1]{Maugeri2000}
there exists a constant $C_1>0$  
such that
\begin{align}
\left\|\varphi_\varepsilon\right\|_{H^2}\leq C_1 \left\|\Delta\varphi_\varepsilon\right\|+\left\|\varphi_\varepsilon \right\|. 
\end{align}
In combination with (\ref{h1h}) and Poincar\'e's inequality, this leads to
\begin{align}
\left\|\varphi_\varepsilon\right\|_{H^2}\leq C_2 \left\|g_1 \right\|.\label{hVIes} 
\end{align} %
Now, we set $\beta_\varepsilon:=\varphi_\varepsilon-\min(\varphi_\varepsilon,\psi_2)\geq 0$ and observe that
\begin{align}
\left\|\nabla\beta_\varepsilon\right\|^2 
=\int_{\Omega_1}\nabla(\varphi_\varepsilon-\psi_2)\nabla\beta_\varepsilon dx
=\left\langle-\Delta\varphi_\varepsilon,\beta_\varepsilon\right\rangle
\end{align}
where $\Omega_1:=\left\{x\in\Omega:\beta_\varepsilon(x)>0 \right\}=\left\{x\in\Omega:\varphi_\varepsilon(x)>\psi_2\geq\psi_1+\varepsilon \right\}$.
By equation (\ref{opL}) and (\ref{hVIreg}), this leads to
\begin{align*}
\left\|\nabla\beta_\varepsilon\right\|^2 
&=\int_{\Omega_1} (g_1+\max(-g_1,0){\theta}_{\varepsilon}(\varphi_\varepsilon-\psi_1) 
+\min(-g_1,0){\theta}_{\varepsilon}(\psi_2-\varphi_\varepsilon))\beta_\varepsilon dx\nonumber\\
&=\int_{\Omega_1} (g_1+\min(-g_1,0))\beta_\varepsilon dx
\leq 0.
\end{align*}
Thus, $\beta_\varepsilon=0$ and therefore $\varphi_\varepsilon\leq\psi_2$ almost everywhere in $\Omega$.

In a similar way, we prove that $\varphi_\varepsilon-\max(\varphi_\varepsilon,\psi_1)=0$ and therefore $\varphi_\varepsilon\geq\psi_1$ almost everywhere on $\Omega$.
Hence $\varphi_\varepsilon$ is contained in $\overline{H}^2(\Omega)\cap \mathbb{K}$.
By inequality (\ref{hVIes}), the sequence $\left\{\varphi_\varepsilon\right\}_{\varepsilon\rightarrow 0}$ is bounded in $\overline{H}^2(\Omega)$
and there exists a weakly convergent subsequence (denoted the same) such that $\varphi_\varepsilon\rightharpoonup_{\overline{H}^2}\varphi^*$
with $\left\|\varphi^*\right\|_{H^2}\leq C_2 \left\|g_1\right\| $.
Since $\mathbb{K}$ is weakly closed, it contains $\varphi^*$.

For arbitrarily small $0<\delta\leq \min(-\psi_1,\psi_2)$, let $v\in \mathbb{K}$ be such that $\psi_1+\delta\leq v\leq\psi_2-\delta$ almost everywhere in $\Omega$. %
Using equation (\ref{hVIreg}) and the monotonicity of $L_\varepsilon$, we infer
\begin{align*}
0\leq \left\langle L_\varepsilon (v),v-\varphi_\varepsilon\right\rangle
&=\left\langle-\Delta v,v-\varphi_\varepsilon\right\rangle
-\int_\Omega(g_1+\max(-g_1,0){\theta}_{\varepsilon}(v-\psi_1) \\
&\hspace{1cm}+\min(-g_1,0){\theta}_{\varepsilon}(\psi_2-v))(v-\varphi_\varepsilon)dx\\
&=\left\langle-\Delta v,v-\varphi_\varepsilon\right\rangle
-\int_\Omega g_1 (v-\varphi_\varepsilon)dx
\end{align*}
for every $0<\varepsilon<\delta$. For $\varepsilon\rightarrow 0$ this leads to
\begin{align*}
0\leq\left\langle-\Delta v,v-\varphi^*\right\rangle
-\int_\Omega g_1 (v-\varphi^*)dx.
\end{align*}
Since $\delta>0$ can be chosen arbitrarily small, the last relation holds for every $v\in \mathbb{K}$ via a limiting process.
Applying \cite[III: Lemma 1.5]{Kinderlehrer2000} once more, this implies
\begin{align*}
0\leq\left\langle-\Delta \varphi^*,v-\varphi^*\right\rangle
-\int_\Omega g_1 (v-\varphi^*)dx,\ \forall v\in\mathbb{K}.
\end{align*}
Due to the uniqueness of the solution for our variational inequality problem, this yields the assertion.\end{proof}

\section{Proof of Lemma \ref{L:Beschr}}
\begin{proof}
	Testing~(\ref{L:Beschr.1})--(\ref{L:Beschr.3}) by $\tau\hat r $, $\hat p $ and $\hat q $, respectively, and summing up we get
	\begin{align*}
		&		\tau \langle h_ r ,\hat r \rangle + \langle h_ p ,\hat p \rangle + \langle h_ q ,\hat q \rangle										\\
		&=		\tau \langle \nabla \hat r ,\nabla \hat r \rangle +\tau\langle \hat A\hat r ,\hat r \rangle + \langle \hat m \nabla \hat p ,\nabla \hat p \rangle						\\&\hskip6mm 
					+	\frac 1 \tau \langle \hat\rho \hat q ,\hat q \rangle - \langle (D \hat q ) \hat u ,\hat q \rangle
					+	\langle 2\hat\eta \epsilon(\hat q ) ,\epsilon(\hat q )\rangle 													\\
		&\ge	\tau ||\hat r ||_{ \overline{H}^1(\Omega )}^2 + C\Big( ||\hat p ||_{ \overline{H}^1(\Omega )}^2 + ||\hat q ||_{ H^1_{0,\sigma}(\Omega ;\mathbb{R}^N )}^2 \Big)
	\end{align*}
	for a positive constant $C$ depending only on $\alpha $ and on the constants in Korn's and Poincar\'e's inequalities.
	This estimate yields the assertion.
\end{proof}

\bibliographystyle{siam}
\bibliography{CHNSdraft9}

\end{document}